\documentclass[11pt]{amsart}
\usepackage[margin=2.55 cm]{geometry}
 \usepackage[utf8]{inputenc}
 \usepackage{mathtools,amsfonts,amsthm,
 mathrsfs,amssymb,amsmath,bm}
\usepackage{textcomp}
\mathtoolsset{showonlyrefs}
\usepackage{bookmark}
\usepackage{enumerate}
\usepackage{microtype}
\usepackage [english]{babel}
\usepackage [autostyle, english = american]{csquotes}
\MakeOuterQuote{"}
\usepackage{todonotes}
\usepackage{ulem}

%\usepackage{cleveref}

% Fancy colour palette
\definecolor{DarkDesaturatedBlue}{HTML}{3A3556}
\definecolor{VividOrange}{HTML}{F15918}
\definecolor{PureOrange}{HTML}{FFBA00}
\definecolor{LightGrayishPink}{HTML}{EEC5D5}
\definecolor{VerySoftBlue}{HTML}{B5AFDB}

%Coloured comments

\theoremstyle{plain}
\newtheorem{theorem}{Theorem}[section] 
\newtheorem{thm}[theorem]{Theorem}
\newtheorem{lemma}[theorem]{Lemma}

\newtheorem{proposition}[theorem]{Proposition}
\newtheorem{corollary}[theorem]{Corollary}
\newtheorem{conj}[theorem]{Conjecture}
\theoremstyle{definition}
\newtheorem*{definition*}{Definition}
\newtheorem{definition}[theorem]{Definition}

\newtheorem{hyp}{Hypothesis}

\newcommand{\cH}{\mathcal{H}}

\newcommand{\bX}{\mathbf{X}}
\newcommand{\bR}{\mathbf{R}}

\newcommand{\sC}{\mathscr{C}}

\newcommand{\bL}{\mathbf{L}}
\newcommand{\bG}{\mathbf{G}}
\newcommand{\bY}{\mathbf{Y}}
\newcommand{\bsigma}{{\boldsymbol{\sigma}}}
\newcommand{\btau}{{\boldsymbol{\tau}}}

\newcommand{\dist}{\text{dist}}
\newcommand{\ber}{\text{Ber}}

\newcommand{\eps}{\varepsilon}

\newcommand{\pr}[1]{\mathbb{P}\left[#1\right]}
\newcommand{\esp}[1]{\mathbb{E}\left[#1\right]}

\newcommand{\pth}[1]{\left(#1\right )}
\newcommand{\bigO}[1]{O\!\pth{#1}}

\newcommand{\ceil}[1]{\left\lceil #1 \right\rceil}

\newcommand{\restrict}[2]{{#1}_{|{#2}}}

\title{Uniformly Random Colourings of Sparse Graphs}

\author[E.\ Hurley]{Eoin Hurley}
\address{Unaffiliated}
\email{eoin.hurley@umail.ucc.ie}
\thanks{The research leading to these results was partially supported by the  Deutsche Forschungsgemeinschaft (DFG, German Research Foundation) -- 428212407 (E. Hurley)}
 \author[F.\ Pirot]{Fran\c{c}ois Pirot}
 \address{Université Paris-Saclay, France.}
 %\curraddr{}
\email{francois.pirot@lisn.fr}
% %\thanks{}

%
%\keywords{A proof of the Harris conjecture}

\begin{document}

\pagenumbering{gobble}

\begin{abstract}
We analyse uniformly random proper $k$-colourings of sparse graphs with maximum degree $\Delta$ in the regime $\Delta < k\ln k $.
This regime corresponds to the lower side of the shattering threshold for random graph colouring, a paradigmatic example of the shattering threshold for random Constraint Satisfaction Problems. 
We prove a variety of results about the solution space geometry of colourings of fixed graphs, generalising work of Achlioptas, Coja-Oghlan \cite{achlioptas2008algorithmic}, and Molloy \cite{molloy2012freezing} on random graphs, and justifying the performance of stochastic local search algorithms in this regime.  
Our central proof relies only on elementary techniques, namely the first-moment method and a quantitative induction, yet it strengthens list-colouring results due to Vu \cite{Vu02}, and more recently Davies, Kang, P., and Sereni \cite{DKPS20+}, and generalises state-of-the-art bounds from Ramsey theory in the context of sparse graphs.
It further yields an approximately tight lower bound on the number of colourings, also known as the partition function of the Potts model, with implications for efficient approximate counting. 

% We give a short proof of a bound on the list chromatic number of any graph $G$ of maximum degree $\Delta$ where each neighbourhood has density at most $d$, namely $\chi_\ell(G) \le (1+o(1)) \frac{\Delta}{\ln \frac{\Delta}{d+1}}$ as $\frac{\Delta}{d+1} \to \infty$. This bound is tight up to an asymptotic factor $2$, which is the best possible barring a breakthrough in Ramsey theory, and strengthens results due to Vu, and more recently Davies, P., Kang, and Sereni. Our proof relies on the first moment method, and adapts a clever counting argument developed by Rosenfeld in the context of non-repetitive colourings. 
% As a final touch, we show that our method provides an asymptotically tight lower bound on the number of colourings of locally sparse graphs.

\end{abstract}

\maketitle

%\newpage

\pagenumbering{arabic}

%\todo[inline]{Address the following.}
%\begin{itemize}
%    \item Check for redundancies between Section 1 and Section 2.2
 %   \item Should we add our names and affiliations now?
 %   \item \textbf{\color{red} Check that the new proof of Theorem 2.10 is correct, and that all the dependencies to that proof have been updated.}
%\end{itemize}

\section{Introduction}
Theoretical Computer Science (TCS), Statistical Physics, and Combinatorics are nearest neighbours in the network of the sciences.
Their stated aims typically differ, but occasionally they converge, leading to a fruitful exchange of ideas. 
A central example of this overlap is graph colouring; for TCS this is a constraint satisfaction problem (CSP), for Statistical Physics this is the anti-ferromagnetic Potts model, and for Combinatorics this is a $ 150$-year-old well of interesting problems.  
In particular, colourings of sparse graphs and random graphs of average degree $\Delta$ with $k\ln k = \Omega(\Delta)$ colours have proved to be a rich source of phase transition type behaviour.
Our focus is on uniformly random $k$-colourings of graphs of maximum degree $\Delta$, satisfying some sparsity conditions (e.g. triangle-freeness) and under the assumption that $\Delta< k\ln k$, in other words right below the phase transition. 

The bulk of the paper is centred around the TCS perspective. We build on the rich literature surrounding phase transitions in random CSPs and the so-called shattering threshold by proving that, above the shattering threshold, properties of the solution space geometry of random graphs --- namely those investigated in \cite{achlioptas2008algorithmic,A06,molloy2012freezing} --- in fact hold for any fixed sparse graph.
For Statistical Physics, we prove an approximately tight lower bound on the number of colourings of sparse graphs; this corresponds to a lower bound on the partition function of the $k$-state anti-ferromagnetic Potts model at temperature $0$. 
These bounds in turn suggest interesting possibilities for limits of sequences of graphs with growing girth, as well as efficient approximate counting  of colourings. 
For Combinatorics, our result also extends to local list-colourings, where list sizes are defined according to local parameters (namely the degree of each vertex), and allow for a certain density of edges within the neighbourhoods.
Our results in this direction strengthen all previous results, and do so with a simpler proof.
The bounds that can be derived from our work are tight barring a breakthrough on a $50$-year-old Ramsey theory problem. 

We now state a simplified version of our primary result (Theorem~\ref{thm:main}), from which all other results follow.
We write $[k]\coloneqq \{1,\dots,k\}$.
For every graph $G$ we let $\sC_k(G)$ be the set of proper $k$-colourings of $G$ (we omit the subscript when $k$ is clear from the context). 
For every vertex $v\in V(G)$, every subgraph $H\subseteq G$, and every $k$-colouring $\sigma \in \sC_k(H)$, we let $L_\sigma(v) \coloneqq [k] \setminus \sigma(N(v) \cap V(H))$ be the list of colours available for $v$ given $\sigma$, and we let $\ell_\sigma(v) \coloneqq |L_\sigma(v)|$ be its order.

\begin{thm}\label{thm:main_simp}
Let $\eps \in (0,1)$ be fixed, let $\Delta$ be sufficiently large (in terms of $\eps)$, and let $G$ be a triangle-free graph of maximum degree at most $\Delta$.
Fix $k$ such that $(1-\eps) k \ln k \ge \Delta$ and set $\ell \coloneqq \Delta^{\eps/2}$.
Then, for every $v\in V(G)$, the uniformly random proper $k$-colouring $\bsigma$ of $G\setminus v$ satisfies
$\esp{\ell_{\bsigma}(v) } \ge  \ell$.
\end{thm}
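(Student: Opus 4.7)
The plan is to prove Theorem~\ref{thm:main_simp} by strong induction on $|V(G)|$. The first step is a reformulation using colour symmetry: because the $\mathbb{Z}/k\mathbb{Z}$-action on $\sC_k(G - v)$ that cyclically permutes colour labels is a bijection, $\mathbb{P}[c \in L_\bsigma(v)]$ is the same for every $c \in [k]$. Summing over $c$ gives
\[
\mathbb{E}[\ell_\bsigma(v)] \;=\; \frac{|\sC_k(G)|}{|\sC_k(G - v)|},
\]
and hence the theorem is equivalent to the edge-counting statement $|\sC_k(G)| \ge \ell \cdot |\sC_k(G - v)|$, which I will prove by induction on $|V(G)|$ with a trivial base case.

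The structural use of the triangle-free hypothesis is that $N(v)$ is an independent set in $G - v$. In the inductive step I would set $R := V(G) \setminus (N(v) \cup \{v\})$ and condition on $\rho := \bsigma|_R$. Since $N(v)$ is independent, conditional on $\rho$ the colours $(\bsigma(u))_{u \in N(v)}$ are mutually independent, each uniform on the list $L_\rho(u) := [k] \setminus \rho(N_G(u) \cap R)$, so
\[
\mathbb{E}[\ell_\bsigma(v) \mid \rho] \;=\; \sum_{c \in [k]} \prod_{u \in N(v)}\!\left( 1 - \frac{\mathbf{1}_{c \in L_\rho(u)}}{|L_\rho(u)|} \right) \;=\; \sum_{c \in [k]} e^{-S_c(\rho)},
\]
where $S_c(\rho) := -\sum_{u : c \in L_\rho(u)} \log(1 - 1/|L_\rho(u)|)$. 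Jensen's inequality for the convex map $t \mapsto e^{-t}$ yields
\[
\mathbb{E}[\ell_\bsigma(v) \mid \rho] \;\ge\; k\,\exp(-\bar S(\rho)), \qquad \bar S(\rho) \;=\; \frac{1}{k} \sum_{u \in N(v)} \big({-|L_\rho(u)|\log(1 - 1/|L_\rho(u)|)}\big).
\]
Since $-m\log(1 - 1/m) = 1 + O(1/m)$ as $m \to \infty$, whenever every $|L_\rho(u)|$ comfortably exceeds, say, $k^{\varepsilon/3}$, one gets $\bar S(\rho) \le (1+o(1))(1-\varepsilon)\ln k$, and the conditional lower bound is at least $k^{\varepsilon - o(1)} \ge \Delta^{\varepsilon/2} = \ell$.

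The main obstacle is the bad case, where some $|L_\rho(u)|$ is small and spoils the Jensen estimate. The inductive hypothesis applied to $G - v$ at a neighbour $u$ provides the right expectation lower bound on $|L_\rho(u)|$, but only under the \emph{uniform} measure on $\sC_k((G-v) - u)$, whereas the marginal of $\rho$ in our setting is the \emph{biased} law $\mathbb{P}[\rho = \rho_0] \propto \prod_{u' \in N(v)} |L_{\rho_0}(u')|$. Reconciling these distributions is the crux of the argument: one would either (i) appeal to a monotonicity/FKG-type principle showing that the biasing, being monotone in each $|L_\rho(u')|$, only increases the expected list size at $u$, or (ii) strengthen the inductive hypothesis to a list-colouring invariant compatible with the biasing. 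A first-moment argument on the number of ``small-list'' neighbours would then confirm that their contribution to $\bar S(\rho)$ is negligible in expectation over $\rho$, so that averaging the Jensen bound over $\rho$ delivers $\mathbb{E}[\ell_\bsigma(v)] \ge \ell$ and closes the induction.
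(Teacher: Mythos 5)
Your overall strategy mirrors the paper's: reduce to the counting inequality $|\sC_k(G)| \ge \ell\,|\sC_k(G\setminus v)|$, induct on $|V(G)|$, condition on the colouring of $R = V(G)\setminus N[v]$ so that (by triangle-freeness) the neighbour colours become independent uniform choices from their lists, and then apply a Jensen/coupon-collector estimate. Your reduction to the ratio of counts is correct, though the $\mathbb{Z}/k\mathbb{Z}$-symmetry detour is unnecessary: the identity $\esp{\ell_\bsigma(v)} = |\sC_k(G)|/|\sC_k(G\setminus v)|$ follows directly from the fact that a colouring of $G\setminus v$ has exactly $\ell_\sigma(v)$ extensions to $G$.

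The genuine gap is where you stop: the ``biased marginal'' worry, and the two speculative fixes you offer (FKG-type monotonicity, or a strengthened list-colouring invariant). Neither is needed, and in fact option (i) is dubious, since for $u,u'\in N(v)$ sharing a second-neighbour the list sizes $|L_\rho(u)|,|L_\rho(u')|$ can be positively \emph{or} negatively correlated depending on the graph. The resolution is much simpler and uses the same extension-counting identity you already invoked. You do not need to control $\esp{|L_\rho(u)|}$ under the biased law at all; a Markov-type tail bound suffices, and it can be read off directly. Since $N(v)$ is independent, $N_H(u)\subseteq R$ for each $u\in N(v)$, so $\ell_\rho(u) = \ell_\bsigma(u)$ for $\bsigma$ uniform on $\sC_k(H)$ (with $H=G\setminus v$). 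Hence the tail probability under the biased marginal of $\rho$ is the tail probability under uniform $\bsigma\in\sC_k(H)$, namely
\[
\pr{\ell_\rho(u)\le t}
= \frac{\#\{\sigma\in\sC_k(H) : \ell_\sigma(u)\le t\}}{|\sC_k(H)|}
\le \frac{t\,|\sC_k(H\setminus u)|}{\ell\,|\sC_k(H\setminus u)|}
= \frac{t}{\ell},
\]
where the numerator is bounded because a colouring of $H\setminus u$ with $\ell_{\sigma'}(u)\le t$ admits at most $t$ extensions to $H$, and the denominator is bounded below by applying the inductive hypothesis to the pair $(H,u)$. Fixing a modest constant $t$ (the paper takes $t = \lceil 5/\eps\rceil$) and setting aside the short-listed neighbours --- at most $\eps k/5$ of them in expectation when $\Delta$ is large --- one applies Jensen/coupon-collector to the long-listed neighbours only, exactly as you outline, and the bound $\esp{\ell_\bsigma(v)}\ge\Delta^{\eps/2}$ follows by an elementary computation. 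So the missing piece is not a new monotonicity principle but simply recognising that the biased marginal is the \emph{correct} measure to compute with, and that the inductive hypothesis controls it via the same extension-counting argument you used in your opening reduction.
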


We observe that the conclusion of Theorem~\ref{thm:main_simp} also holds for the uniformly random proper $k$-colouring $\btau$ of $G$. Indeed, the number of extensions of $\sigma \in \sC(G\setminus v)$ to $\sC(G)$ is precisely $\ell_\sigma(v)$, and thus $\esp{\ell_\btau(v)} = \esp{\ell_\bsigma(v)^2} \ge \esp{\ell_\bsigma(v)}$.

The proof of Theorem~\ref{thm:main_simp} relies on the following Coupon-Collector Lemma which is a randomised version of a result due to Molloy \cite{Mol19}.

\begin{lemma}[Coupon-Collector Lemma]
\label{lem:coupon-collector}
Suppose we have random non-empty lists $\bL_1,\dots,\bL_d$, each of which takes values in the finite subsets of $\mathbb{N}$. 
Fix some integer $t\ge 1$, and define the random variable $\bX \coloneqq \#\{i \in [d] : |\bL_i| \le t\}$.
Now choose an element $\bsigma(i)$ of $\bL_i$ uniformly at random for each $i \in [d]$ and define the random variable $\bL\coloneqq [k] \setminus \{ \bsigma(i) : i \in [d]\}$. 
Then
\[
\esp{|\bL|} \ge k_0\,\mathrm{e}^{-\pth{1+\frac{1}{t}}\frac{d}{k_0}}, \quad \mbox{where } k_0 = k - \esp{\bX}.
\]
\end{lemma}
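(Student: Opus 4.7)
The plan is to condition on the random lists $\bL_1, \ldots, \bL_d$ together with the values of $\bsigma$ at the \emph{short-list} indices $S := \{i \in [d] : |\bL_i| \leq t\}$. Write $U := \{\bsigma(i) : i \in S\}$, so that $|U| \leq |S| = \bX$ pointwise. Each $c \in U$ is already used, while by independence of the remaining choices $\bsigma(i)$ for $i \notin S$, for every $c \in [k] \setminus U$,
\[
\pr{c \in \bL \mid \bL_1,\dots,\bL_d, \, \bsigma|_S} = \prod_{i \notin S} \pth{1 - \frac{\mathbf{1}[c \in \bL_i]}{|\bL_i|}}.
\]
Summing this over $c \in [k]\setminus U$ (the contribution of $U$ being $0$) is precisely $\esp{|\bL| \mid \bL_1,\dots,\bL_d, \bsigma|_S}$.

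Next, since $|\bL_i| \geq t+1$ for $i \notin S$, every factor above has the form $1 - x$ with $x \in [0, 1/(t+1)]$. On this range the elementary bound $-\ln(1-x) \leq x/(1-x) \leq x(1 + 1/t)$ yields $1 - x \geq \exp(-x(1 + 1/t))$. Plugging in and using $\sum_c \mathbf{1}[c \in \bL_i]/|\bL_i| \leq 1$ for each $i$ (so the double sum $\sum_{c \in [k] \setminus U} \sum_{i \notin S} \mathbf{1}[c \in \bL_i]/|\bL_i|$ is at most $d$), I apply Jensen's inequality (convexity of $\exp$) to average the $k - |U|$ exponential terms, obtaining
\[
\esp{|\bL| \mid \bL_1,\dots,\bL_d, \bsigma|_S} \; \geq \; (k - |U|) \exp\!\pth{-\frac{(1 + 1/t)\, d}{k - |U|}}.
\]

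To conclude, set $c_0 := (1 + 1/t) d$ and $f(x) := x e^{-c_0 / x}$. A direct computation gives $f'(x) = e^{-c_0/x}(1 + c_0/x) > 0$ and $f''(x) = (c_0^2 / x^3) e^{-c_0/x} > 0$, so $f$ is strictly increasing and convex on $(0,\infty)$. Using $|U| \leq \bX$ together with monotonicity yields $\esp{|\bL|} \geq \esp{f(k - \bX)}$, and Jensen applied to the convex $f$ then gives $\esp{f(k - \bX)} \geq f\pth{\esp{k - \bX}} = f(k_0)$, which is exactly the claimed bound.

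The key conceptual step is the short/long split: a list of size one forces a colour to be used with probability one and ruins any naive product bound, so no useful exponentiation is possible until such pathological indices are quarantined. Conditioning on $\bsigma|_S$ and deleting $U$ from the palette reduces the problem to a genuine coupon-collector estimate on the surviving $k - |U|$ colours and the long-list indices, and the price paid for this bookkeeping is the replacement of $k$ by $k_0 = k - \esp{\bX}$ in the final exponential — everything else is two invocations of Jensen's inequality.
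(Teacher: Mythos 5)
Your proof is correct and takes essentially the same approach as the paper's: separate the short- and long-list indices, condition on the short-list assignment, apply the exponential bound $1-x\ge e^{-(1+1/t)x}$ valid for $x\le 1/(t+1)$, then invoke Jensen twice (once via the convexity of $\exp$ over the $k-|U|$ surviving colours, once via the convexity and monotonicity of $z\mapsto ze^{-c_0/z}$ to pass from $k-\bX$ to $k_0$). The only cosmetic difference is that you condition on $\bsigma|_S$ as a random variable rather than, as in the paper, fixing an arbitrary realisation $\{x_i\}_{i\in S}$ and observing that $\bL_0$ is the conditional law of $\bL$.
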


For exposition's sake, let us show how Theorem~\ref{thm:main_simp} follows from the Coupon-Collector Lemma.
The proof of the more general statement of Theorem~\ref{thm:main} uses the same idea, but in order to get the explicit value of $\eps$ and handle the more general set-up we need an extra layer of technicality that obscures its conceptual simplicity.
%we must get our hands a little bit dirty. 
%In spite of this the two central observations, namely \eqref{eq:list_tail_bound} and \eqref{eq:doublesum} (the latter due to \cite{Mol19}) are the same in both proofs, while
Let us mention that the proof relies on an induction hypothesis that is (a rephrasing of) the standard  Rosenfeld counting setup \cite{Ros20,WaWo20+}.
\begin{proof}[Proof of Theorem~\ref{thm:main_simp}]
    We prove the result by induction on $n$, the order of $G$.
    %, that for every $v\in V(G)$, the uniformly random proper $k$-colouring $\bsigma$ of $G\setminus v$ satisfies
    %$\esp{ \ell_{\bsigma}(v) } \ge  \ell$.
    If $G$ has only $1$ vertex we are trivially done. 
    Suppose the statement holds for all $H$ satisfying the conditions of the statement and with order at most $n-1$. 
    Let $G$ be as in the statement with order $n$ and let $v\in V(G)$ be arbitrary.
    %For the duration of the proof all colourings are proper $k$-colourings. 
    Letting $H\coloneqq G\setminus v$, we will show that the uniformly random $k$-colouring $\bsigma \in \sC(H)$ satisfies
    $\esp{ \ell_{\bsigma}(v) } \ge \ell$. Observe that this is equivalent to the statement $|\sC(G)| \ge \ell \,|\sC(G\setminus v)|$.

    Let us fix some integer $t$. Given the realisation of $\bsigma$, we say that a neighbour $u\in N(v)$ of $v$ has a \emph{short list} if $\ell_{\bsigma}(u) \le t$, and we let $S_\bsigma$ be the set of vertices $u\in N(v)$ with short lists.
    %We denote the complement set $B_\bsigma \coloneqq N_G(v)\setminus S_\bsigma$, and say that those vertices have \emph{big lists}. 
    First we observe that the expected size of $S_\bsigma$ is small.
    %
    %and then we show that, when fixing the realisation $\sigma_0$ of $\bsigma$ outside of $B_\bsigma$ and averaging over all possible extensions to $B_\bsigma$, the expected value of $\ell_\bsigma$ is large if $S_\bsigma$ is small enough.
    %
    Indeed, consider the probability that a vertex $u\in N(v)$ belongs to $S_\bsigma$; we have

    \begin{equation}
    \label{eq:list_tail_bound}
        \pr{\ell_\bsigma(u) \le t} = \frac{\#\{\sigma \in \sC(H) : \ell_\sigma(u)\le t\}}{|\sC(H)|} \le \frac{t\, |\sC(H\setminus u)|}{\ell\, |\sC(H\setminus u)|} = \frac{t}{\ell},
    \end{equation}
    where the bottom line of the inequality follows from the induction hypothesis applied on $H$ and $u$. 
    So $\esp{|S_\bsigma|} \le \Delta t/\ell$.

    Let us write $N(v) = \{u_1, \ldots, u_d\}$.
    We are now ready to apply Lemma~\ref{lem:coupon-collector} to the random lists $L_\bsigma(u_1), \ldots, L_\bsigma(u_d)$, with $\bX = |S_\bsigma|$ counting the number of short lists. 
    We simultaneously resample $\bsigma(u_i)$ uniformly at random from $L_\bsigma(u_i)$ for every $u_i \in N(v)$ 
    and note that this is precisely the set-up of   Lemma~\ref{lem:coupon-collector}.
    We obtain that $\esp{L_\bsigma(v)} \ge k_0 \mathrm{e}^{-\pth{1+1/t}d/k_0}$ where $k_0 = k-\esp{|S_\bsigma|} \ge k-\Delta t/\ell$.
    Crucially, the realisation of the lists $L_\bsigma(u_1), \ldots, L_\bsigma(u_d)$ is determined by the restriction of $\bsigma$ to $H\setminus N(v)$, so it is not affected by the resampling.
    It follows that $\bsigma$ remains uniformly distributed in $\sC(H)$ after this resampling. 
    To finish the proof, we may set $t\coloneqq \ceil{5/\eps}$, and assume that $\Delta$ is large enough so that $\Delta t/\ell \le \eps k/5$. We have

    \begin{align*}
        \esp{|L_\bsigma(v)|} &\ge \pth{k-\frac{\Delta t}{\ell}} \mathrm{e}^{-\frac{(1+1/t)\Delta}{k-\Delta t/\ell}} \ge \pth{1-\eps/5} k \mathrm{e}^{-\frac{(1+\eps/5)(1-\eps)k\ln k}{(1-\eps/5)k}} \ge (1-\eps/5)k\mathrm{e}^{(-1-\eps/5 + 4\eps/5)\ln k}\\
        &\ge (1-\eps/5)k^{3\eps/5}\ge \Delta^{\eps/2},
    \end{align*}
    assuming once again that $\Delta$ is large enough.

    % Finally, we show that if the uniformly random proper $k$-colouring $\bsigma$ of $G\setminus v$ satisfies
    % $\esp{ \ell_{\bsigma}(v) } \ge  \ell$
    % then so does the uniformly random $k$-colouring $\btau$ of $G$.
    % This is because the number of extensions of $\sigma \in \sC(G\setminus v)$ to $\sC(G)$ is precisely $\ell_\sigma(v)$ and thus $\esp{\ell_\btau(v)} = \esp{\ell_\bsigma(v)^2} \geq \esp{\ell_\bsigma(v)}$.
    
\end{proof}

The statement of Theorem~\ref{thm:main_simp} concerns only the first moment of list orders. We will show that, from this statement, we can derive many properties that the random proper $k$-colouring $\bsigma$ satisfies with high probability. To this, we need to prove concentration of certain random variables; this is usually achieved by expressing said variable as a sum of (approximately) negatively correlated binary random variables.
 Here, we wish to prove concentration for the number of short lists that appear in the neighbourhood of a vertex in a uniformly random colouring.
 We are unfortunately in a context where negative correlation does not hold and thus we introduce another property, namely Bernoulli-domination, that suffices to have high concentration inequalities in that context. This is obtained, in Theorem~\ref{cor:ind_set_corr}, by combining Theorem~\ref{thm:main_simp} with repeated applications of \eqref{eq:list_tail_bound}.

 This work constitutes, to our knowledge, the first rigorous analysis of the properties of a typical proper colouring of an arbitrary deterministic graph in this regime, and we suspect our methods may be adapted to answer many more questions about such typical colourings and CSP solutions.

% It is perhaps surprising how much information one can derive from the statement of Theorem~\ref{thm:main_simp}, which is a statement about the first moment of list orders. 
% Indeed, one of our applications includes statements about global properties holding with high probability, which requires tail bounds that are much better than any application of Markov's probability. 
% We achieve this by combining Theorem~\ref{thm:main_simp} with repeated applications of \eqref{eq:list_tail_bound} to show that the appearance of short lists are \emph{in some sense} negatively correlated (see Section~\ref{sec:technical_results}).
% This gives us access to concentration inequalities.

% In Section~\ref{sec:CSP}, we are interested in being able to make local changes to a colouring $\sigma \in \sC_k(G)$, for $G,k$ as in Theorem~\ref{thm:main_simp}. This will require to be able to have good estimates of the probability of appearance of short lists in the neighbourhood of a vertex. For negatively-correlated events, we can directly apply concentration inequalities after having estimated their expected value --- this is what Molloy did in \cite{Mol19} in the context of random partial colourings. We are unfortunately in a context where negative correlation does not hold; we will introduce another property that suffices to have high concentration inequalities in that context.

%\todo[inline]{Set $\ell \coloneqq k^{\eps - 2/\ln \Delta}$ instead (so we implicitly assume that $\eps > 2/\ln \Delta$, or equivalently $\Delta > \mathrm{e}^{2/\eps}$); include calculation in the Appendix.}

\subsection{Organisation of Paper}
In Section~\ref{sec:context and statement}, we give problem-specific context and state our headline results. 
In Subsection~\ref{sec:CSP}, 
%the largest section of the paper, 
we review the random CSP's literature before introducing our results on the solution space geometry for colourings of sparse graphs. 
In Subsection~\ref{sec:local_list} we discuss the Combinatorics literature and the extensions to list-colourings and graphs of bounded local density. 
In Subsection~\ref{ref:partition functions} we state our lower bound on the number of colourings and discuss the relevance to graph limits and efficient approximate counting. 
In Section~\ref{sec:technical_results} we introduce 
the probabilistic machinery (the Coupon-Collector-type results in Subsection~\ref{sec:coupon-collector} and the ones using Bernoulli-domination in Subsection~\ref{sec:bernoulli-domination}) that we rely on, and apply it to sketch the proofs of our results from Subsection~\ref{sec:CSP} in Subsection~\ref{sec:proofs}. 
All remaining proofs lie in the Appendix.

\section{Context and Statement of Results}\label{sec:context and statement}
All graphs considered will be simple loopless graphs, denoted by $G = (V(G),E(G))$ or $H$ and typically on $n$ vertices. 
For a vertex $v \in V(G)$, we denote by $N_G(v)$ the neighbourhood of $v$ in $G$, and by $N_G[v]\coloneqq \{v\}\cup N_G(v)$ its closed neighbourhood. We omit the subscript if it is clear from the context.
For a subset $U\subseteq V(G)$ let $G[U]\subseteq G$ denote the subgraph of $G$ induced by $U$. 
Throughout, when we say that a property holds with high probability (w.h.p.), we mean with probability tending to $1$ as $n$ tends to $\infty$.
As is standard, we denote by $o_y(x)$ a real-valued function $f(x,y)$ such that $f(x,y)/x\rightarrow 0$ as $y \rightarrow \infty$ for all $x>0$.
Further, we denote by $\Omega(x)$ a real-valued function $f(x)$ for which there exists $C>0$ such that $f(x) \ge Cx$ for all $x$ sufficiently large.

\subsection{Solution Space Geometry}\label{sec:CSP}
For over half a century we have known that many Constraint Satisfaction Problems (CSPs) are NP-hard (in the worst case).
It was hoped that average case complexity would paint a brighter picture, and indeed early results suggested this might be so \cite{turner1988almost,dyer1986fast,kuvcera1989graphs}.
These results yielded polynomial-time algorithms for many random CSPs, including for $k$-colouring almost all graphs of chromatic number at most $k$.
However, a more challenging\footnote{More challenging and perhaps more natural, as the assumed condition is checkable in polynomial time.} test case soon emerged; finding a polynomial-time algorithm that, with high probability, finds a solution to a random CSP with a \textit{fixed ratio of constraints to variables}. 
When the CSP is graph colouring this test case becomes: $k$-colour the \emph{Erd\H{o}s-R\'enyi random graph $G_{n,dn/2}$}. 
That is, the graph chosen uniformly at random from all graphs with $n$ vertices and $dn/2$ edges, so of average degree $d$.
In \cite{achlioptas1997analysis} it was shown that w.h.p. the following algorithm suffices for $d \le k \ln k$:
\begin{enumerate}
    \item choose a vertex $v$ uniformly at random from those with the least number of available colours;
    \item colour $v$ uniformly at random with one of its available colours. 
\end{enumerate}
Remarkably, while $G_{n,dn/2}$ is known to be w.h.p. $k$-colourable for $d \sim 2 k\ln k $, no algorithm has been proven to succeed for any $d > (1+o_k(1))k\ln k$.
This extremely naive algorithm is the state of the art. 
As observed by Achlioptas and Coja-Oghlan in \cite{achlioptas2008algorithmic}, this state of affairs appears to be universal in that it holds for "nearly every random CSP of interest" (when $d$ represents the ratio of constraints to variable).
That is, extremely naive algorithms (known for decades) work up to some threshold ($d_{Sh}$), which is well below the SAT-UNSAT threshold ($d_{SU}$), but no algorithms are known to work for $d>d_{Sh}$.
The value of $d_{Sh}$ is not arbitrary; it coincides with the conjectured shattering threshold \cite{krzakala2007gibbs} from the statistical physics literature (which is defined in terms of expected spatial correlation decay).
Drawing inspiration from these conjectures, and building on \cite{A06}, Achlioptas and Coja-Oghlan rigorously demonstrated that, at least in the case of random graph colouring, random $k$-SAT and random $k$-uniform hypergraph $2$-colouring, this was no coincidence. 
As stated in earlier work \cite{A06}  a key idea underlying their approach is the following. 

\begin{hyp}\label{meta-hyp}
The geometry of the solution space of a given CSP instance dictates the performance of algorithms on that instance\footnote{This has a very similar flavour to the Overlap Gap Property, see \cite{gamarnik2021overlap}.} 
\end{hyp}

They proved that w.h.p. two phase transitions, one local and one global, occur in the geometry of the solution space  as $d$ increases past this shattering threshold ($d_{Sh}~k\ln k$ for random graph colouring).
\textit{Globally}, the solution space starts as a large well-connected ball before shattering into exponentially many, exponentially small, well-separated clusters.
\textit{Locally}, typical solutions start as able to change the assignment of almost any variable by changing $o(n)$ other variables (almost all variables are "loose") and end up with this not being the case for almost all variables (almost all variables are "rigid") (see Section~\ref{sec:RLFT}). 
They hypothesised that these phase transitions are \textit{the} barriers to efficient algorithms.
That is, these properties of the geometry of the solution space explain both the existence of algorithms for $d< d_{Sh}$ and the difficulty in finding such algorithms for $d_{SU}> d > d_{Sh}$.

In \cite{molloy2012freezing}, the local phase transition --- also known as the freezing threshold --- was further analysed in the case of random graph colouring, showing that an even stronger statement, Theorem~\ref{thm:mol-frozen}, holds. 
The local freezing threshold is of particular interest because, in \cite{krzakala2007gibbs,zdeborova2008statistical,zdeborova2007phase}, it (as opposed to the global clustering threshold)  was hypothesised to be the primary barrier to efficient algorithms.  
Indeed there is evidence that supports this; while the thresholds are asymptotically equal (in $k$), they are not for small $k$, and in \cite{achlioptas2002almost} a random graph colouring algorithm is provided for $d=4$, for values of $k$ above the clustering threshold but below the freezing threshold (the latter is always higher).  

Thus while random CSPs have proved to be challenging problems, there was a consistent explanation of why algorithms succeed or not, based on the typical geometry of the solution space of a random problem instance. 
However, a crack appeared in this picture when, in \cite{Mol19}, Molloy used entropy compression (inspired by Moser-Tardos \cite{moser2010constructive}) to prove that the following clever and simple algorithm yields a $k$-colouring of \textit{any} triangle-free graph $G$ of maximum degree $d \le (1-o_k(1))k\ln k$ (right up to the shattering threshold!):
\begin{enumerate}
    \item start from an arbitrary partial proper $k$-colouring of $G$;
    \item while there exists a $\textbf{Bad}$ vertex $v$, resample a partial proper colouring of the neighbourhood of $v$ uniformly at random\footnote{In fact in this step Molloy gave priority to the \textbf{Bad} vertices that were most recently not \textbf{Bad}, but that is not relevant to our discussion.};
    \item extend the partial colouring to a complete proper colouring. 
\end{enumerate}
This is of course a stochastic local search algorithm \cite{iliopoulos2019stochastic}. Molloy defined \textbf{Bad} vertices to be those with too few colours available, or too many uncoloured neighbours that might create a conflict on a given colour.
Via entropy compression, he demonstrated that after resampling neighbourhoods polynomially many times (in expectation), no vertices would be \textbf{Bad}. 
Then, a direct application of the algorithmic Lov\'{a}sz Local Lemma \cite{moser2010constructive} suffices to complete the colouring in polynomial-time. 
In fact, Bernshteyn \cite{Ber19} showed that the initial part of the algorithm could also be viewed as an application of the Lov\'asz Local Lemma \cite{alon2016probabilistic}.

The reason this constitutes a crack in our picture is that our barriers to the existence of efficient algorithms above the shattering threshold were hypothesised based on the analysis of random graphs.
But Molloy's algorithm works for \textit{all triangle-free graphs}, not just random graphs. 
Thus, working off of Hypothesis~\ref{meta-hyp}, we would expect that the geometry of the space of colourings of all triangle-free graphs of max degree $d < k \ln k$ is similar to that of $G_{n,d/n}$ as described by Achlioptas, Coja-Oghlan \cite{achlioptas2008algorithmic}. 
If, for example,  there existed a $d$-regular triangle-free graph for which the typical $k$-colouring was rigid, or for which the solution space consisted of exponentially many well-separated components, then this would rule out these phenomena (the local freezing threshold and the global clustering threshold respectively) as barriers to efficient algorithms.
The central contribution of this work, see Theorems~\ref{thm:recol} and~\ref{thm:girth-thawed}, is to show that no such graph exists in the former case, supporting the possibility that the local freezing threshold is a barrier to efficient algorithms.  
Establishing positive results on the connectivity of the solution space is also particularly relevant to understanding the performance of stochastic local search algorithms, such as Molloy's \cite{Mol19}.

\subsubsection{Rigid, Loose, Frozen, Thawed}\label{sec:RLFT}
In this section, we formally define the local "freezing" phase transition discussed in the introduction. 
For a discussion and formal definition of the global clustering phase transition see \cite[Section~2.1]{achlioptas2008algorithmic}.
We say "local" because it concerns the perspective from a single colouring, rather than viewing the whole solution space at once. 
For two colourings $\sigma$ and $\tau$ of a graph $G$, let $\dist(\tau,\sigma)\coloneqq \#\{v \in V(G): \sigma(v) \neq \tau(v)\}$ be the \emph{Hamming distance}. 
\begin{definition}
For $t,k\in \mathbb{N}$ and any graph $G$, we define \emph{the distance-$t$ $k$-colouring graph $\cH (G,k,t)$} as follows. 
The vertices of $\cH$ are the proper $k$-colourings of $G$ and two distinct vertices $\tau,\sigma$ are adjacent in $\cH$ if  $\dist(\tau,\sigma) \le t$.
Further, we define the \emph{$t$-clusters} to be the connected components of $\cH(G,k,t)$ (when $t=1$ we just say clusters). 
\end{definition}
Thus $\tau$ and $\sigma$ are in the same $t$-cluster if and only if there is a sequence of proper $k$-colourings $\tau=\tau_1,\dots,\tau_s = \sigma$ such that $\dist(\tau_i,\tau_{i+1})\le t$ for all $i \in [s-1]$ (note that $s$ may be arbitrarily large).
When $k$ and $t$ are clear from the context we will just write $\cH(G)$.

We now state the definitions of \cite{achlioptas2008algorithmic,molloy2012freezing} in these terms. 

\begin{definition}
Given $k,t \in \mathbb{N}$, a graph $G$, a vertex $v \in V(G)$ and $\tau\in \sC_k(G)$, we say that $v$ in $\tau$:
{\begin{itemize}
    \item is $t$-loose if for all $j\in [k]$ there exists a neighbour $\sigma\in \cH(G)$ of $\tau$ with $\sigma(v) =j$;
    \item is $t$-thawed if for all $j\in [k]$ there exists $\sigma \in \cH(G)$ in the same $t$-cluster as $\tau$ with $\sigma(v)=j$;
    \item is $t$-rigid if $\tau(v)=\sigma(v)$ for all neighbours $\sigma\in \cH(G)$ of $\tau$;
    \item is $t$-frozen if $\tau(v) =\sigma(v)$ for all $\sigma \in \cH(G)$ in the same $t$-cluster as $\tau$.
\end{itemize}}
\end{definition}

%\todo[inline]{State explicitly what these definitions relate to when $t=1$.}

When we omit $t$ in the notations above, it means that $t=1$ by default. 
Note that the definitions are ordered from the most connected to the least connected, and that $t$-loose implies $t$-thawed and $t$-frozen implies $t$-rigid.  
Another way to view the definition of thawed and frozen is to consider the projections of the cluster containing $\tau$ onto $v$. 
If this projection contains one/all element(s) of $[k]$ then $v$ is frozen/thawed in $\tau$.
Note that loose/rigid are both about neighbours, not clusters; in particular a vertex $v$ can be loose only if it is isolated, and it is rigid if $N[v]$ spans all $k$ colours.

While in this paper we discuss uniformly random colourings $\bsigma$ of a fixed graph $G$, in \cite{achlioptas2008algorithmic,molloy2012freezing} they discussed uniformly random colourings of uniformly random graphs.
We make this precise as follows.
First fix $d,k\in \mathbb{N}$, and now consider $n\in \mathbb{N}$ and $m\coloneqq \frac{dn}{2}$ (we will let $n$ tend to infinity).
We now let $\bG$ be chosen uniformly at random from all graphs on $n$ vertices with $m$ edges.
Finally we let $\bsigma$ be a proper $k$-colouring of $\bG$ chosen uniformly at random. 
The random pair $(\bG,\bsigma)$ is the subject of \cite{achlioptas2008algorithmic,molloy2012freezing} (although they analyse it via the powerful "planted" model). 
Note that conditioned on $\bG=G$ our random variable $\bsigma$ is the same. 
Given $k$, $n$, and $d$, we will refer to the above described $(\bG,\bsigma)$ as the \emph{uniformly random instance-solution pair}. 

We now review what is known about rigid, loose and frozen variables (thawed was introduced in this paper). 
Let $k,d\in \mathbb{N}$ such that 
\begin{equation}\label{eq:shatter}
    (1+o_k(1))k\ln k \le d \le (2 - o_k(1))k\ln k.
\end{equation}
\begin{thm}\label{thm:ach-rigid}\cite[Theorem~4]{achlioptas2008algorithmic} For $k,d$ as in \eqref{eq:shatter}
 the uniformly random instance-solution pair $(\bG,\bsigma)$ w.h.p. contains at least $(1 - o_k(1)) n$ vertices that are $\Omega(n)$-rigid.  
\end{thm}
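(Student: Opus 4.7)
I would follow the planted-model first-moment approach of Achlioptas--Coja-Oghlan. The goal is to find a constant $c=c(k)>0$ such that, with high probability, at least $(1-o_k(1))n$ vertices $v\in V(\bG)$ admit no proper $k$-colouring $\tau$ with $\dist(\tau,\bsigma)\le cn$ and $\tau(v)\ne\bsigma(v)$; by vertex symmetry it suffices to bound the probability that a fixed vertex $v_0$ is flexible and show it is $o_k(1)$.

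\textbf{Planted reduction.} In the regime \eqref{eq:shatter}, the uniform instance-solution distribution on $(\bG,\bsigma)$ is contiguous to the planted distribution, in which $\bsigma$ is sampled first as a balanced random $k$-partition of $[n]$, and $\bG$ is then a uniformly random graph with $m=dn/2$ edges chosen among the $\bsigma$-proper pairs $\{u,v\}$. Any w.h.p.\ statement transfers between the two models (this is the standard consequence of a sharp second-moment computation for the chromatic polynomial below the condensation threshold), so I would work in the planted model where the planted colouring is fixed by design.

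\textbf{Local flexibility and frozen core.} I would first control the set of locally flexible vertices --- those $v_0$ for which the colours used by the $\bsigma$-neighbours of $v_0$ do not exhaust $[k]$, i.e., $v_0$ has a free colour. For $d\ge(1+o_k(1))k\ln k$, the expected number of free colours at a given vertex is $k(1-1/k)^d=k^{-\Omega(1)}$, so Markov immediately gives that the number of locally flexible vertices is $o_k(n)$ in expectation, hence $o_k(n)$ w.h.p.\ by standard concentration (this is a martingale argument in the random graph). This already settles the $1$-rigidity version of the claim; the work lies in boosting to $cn$-rigidity.

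\textbf{Cascade and first moment.} A vertex $v_0$ fails to be $cn$-rigid exactly when there is a flip region $F\ni v_0$ of size $\le cn$ supporting a proper recolouring $\tau$ with $\tau|_{V\setminus F}=\bsigma|_{V\setminus F}$. Crucially, for each $u\in F$, every $\bsigma$-neighbour of colour $\tau(u)$ must lie in $F$, i.e., the flip region is self-sustaining under a "forcing" cascade that locally resembles the condition for a core of the random graph to be preserved. Using this forcing, I would bound by first moment the expected number of pairs $(F,\tau|_F)$ with $v_0\in F$, $|F|\le cn$, and $\tau$ proper; in the planted model the probability that a given $\tau$ is a proper colouring of $\bG$ is, to leading order, $\bigl((1-2/k+\|A(\tau)\|^{2})/(1-1/k)\bigr)^{m}$, where $A(\tau)$ is the $k\times k$ overlap matrix between $\bsigma$ and $\tau$. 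Grouping the sum by $A$ via a multinomial entropy estimate rewrites the expectation as $\sum_A\exp(n\,\phi_c(A))$, and one shows $\phi_c(A)<0$ uniformly away from the identity overlap, with the gap driven by the forcing condition above.

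\textbf{Main obstacle.} The hardest part is to interpolate cleanly between two regimes of the flip-set size $s=|F|$. For sublinear $s$ (in particular $s=O(1)$ or $s=\polylog n$) the global rate function is close to degenerate and the bare first moment does not decay, so one must instead run the local cascade argument as a branching-process style computation, showing that the forced propagation starting from $v_0$ dies out before reaching size $cn$ with probability $1-o_k(1)$. For linear $s=\Theta(n)$ the argument is the shattering computation itself: the rate function $\phi_c$ is strictly negative throughout the Hamming band around $\bsigma$ that separates $\bsigma$'s own cluster from the far-away proper colourings, cf.\ \cite{achlioptas2008algorithmic,krzakala2007gibbs}. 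Stitching the two regimes into a single uniform bound over $s\in[1,cn]$, while keeping $c$ a positive absolute constant depending only on $k$, is what drives the careful choice of parameters in the original proof.
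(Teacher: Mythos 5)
This theorem is not proved in the paper at all: it is quoted verbatim from Achlioptas and Coja-Oghlan \cite{achlioptas2008algorithmic} and serves purely as background motivation for the authors' own results on the unshattered side of the threshold (Theorems~\ref{thm:recol} and~\ref{thm:girth-thawed}). There is therefore no "paper's own proof" to compare your attempt against.

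As a reconstruction of the argument in \cite{achlioptas2008algorithmic}, your sketch captures the broad strategy correctly --- planted-model contiguity, counting colours absent from a vertex's neighbourhood to deal with $1$-rigidity, then a first-moment/overlap-matrix bound on flip regions --- but it is not a proof. You candidly flag in the "Main obstacle" paragraph that the interpolation between sublinear flip sets (where a bare first-moment bound is degenerate and one needs a subcritical-branching or cascade argument) and linear flip sets (where the rate-function $\phi_c(A)$ must be shown strictly negative off the identity overlap throughout the relevant Hamming band) is exactly the technical heart of the original proof, and you do not carry it out. Nor do you justify the planted-model transfer beyond asserting contiguity, which in the regime \eqref{eq:shatter} itself relies on a delicate second-moment computation. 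These are genuine gaps; what you have is a roadmap of the original proof, not a self-contained argument. Since the paper treats this result as a black box, the appropriate course here is simply to cite \cite[Theorem~4]{achlioptas2008algorithmic}, exactly as the authors do.
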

\begin{thm}\label{thm:mol-frozen}\cite[Theorem~2.4]{molloy2012freezing}
For $k,d$ as in \eqref{eq:shatter}, there exists $0<\alpha,\beta<1$ such that w.h.p. the uniformly random instance-solution pair $(\bG,\bsigma)$ contains $(\beta + o_n(1)) n $ vertices that are $\alpha n$-frozen. 
\end{thm}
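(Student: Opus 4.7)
The plan is to follow the planted-model strategy of Achlioptas--Coja-Oghlan and its refinement by Molloy \cite{achlioptas2008algorithmic,molloy2012freezing}. Rather than analysing $(\bG,\bsigma)$ directly, one passes to the planted pair $(\bG_{\mathrm{pl}}, \btau)$: sample $\btau \in [k]^n$ uniformly, then sample $\bG_{\mathrm{pl}}$ uniformly from graphs with $m=dn/2$ edges and no $\btau$-monochromatic edge. A small-subgraph conditioning argument shows that for $d \ge (1+o_k(1))k\ln k$ the two distributions are contiguous on events of probability $n^{-O(1)}$, so it suffices to prove the statement in the planted model.

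Next, I would construct a \emph{frozen core} $F$ by an iterated peeling. Call $v$ \emph{protected} if for every colour $c \ne \btau(v)$, $|N(v) \cap \btau^{-1}(c)| \ge T$ for a threshold $T = \Theta(\log k)$. Since the expected number of $c$-coloured neighbours of a vertex is $d/k \ge (1+\eps)\ln k$, a Chernoff bound together with a union bound over colours yields that the set $F_0$ of protected vertices satisfies $|F_0| \ge (\beta_0 + o_n(1))n$ for some $\beta_0>0$. Iteratively define
\[
F_{i+1} \coloneqq \sset{v\in F_i}{\forall c\ne\btau(v),\ |N(v)\cap F_i\cap \btau^{-1}(c)|\ge T},
\]
and stop at the fixed point $F$. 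A standard core-type first-moment computation in the planted graph, again leveraging the slack in $d \ge (1+o_k(1))k\ln k$, shows $|F| \ge (\beta + o_n(1))n$ for some $\beta>0$ independent of $n$.

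Finally, I would argue that every $v \in F$ is $\alpha n$-frozen for a suitable $\alpha=\alpha(\beta,T,d)>0$. Call $S \subseteq F$ a \emph{consistent flip} if there is a proper $k$-colouring $\sigma$ of $\bG_{\mathrm{pl}}$ with $\sigma = \btau$ on $V\setminus S$ and $\sigma \ne \btau$ on $S$; by the closure property of $F$, for each $v \in S$ with $\sigma(v)=c$ we must have $N(v)\cap F\cap \btau^{-1}(c) \subseteq S$. The goal is to rule out consistent flips with $1\le |S|\le \alpha n$. Along any path $\btau=\sigma_0,\sigma_1,\ldots,\sigma_s$ in $\cH(\bG_{\mathrm{pl}},k,\alpha n)$, each step is a Hamming change of size at most $\alpha n$, so inductively the vertices of $F$ on which $\sigma_j$ disagrees with $\btau$ form a consistent flip of size $\le \alpha n$; ruling this out forces the disagreement set to be empty. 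The first-moment bound on consistent flips of size $s$ goes as $\binom{|F|}{s}k^s p^{sT/2}$ with $p \sim d/n$, and the exponential factor coming from the $T$ enforced neighbour-constraints per vertex in $S$ suppresses this below $1$ for all $s \le \alpha n$, once $\alpha$ is chosen small enough.

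The main obstacle will be this last step: since $\alpha n$ is itself linear in $n$, one cannot simply iterate "each step of the path changes few vertices" to force a contradiction. Instead one must exploit the exponential closure cost against the entropy of candidate flips, and this requires $T$, $\beta$, and $\alpha$ to be jointly tuned so that (i) the peeling stabilises at a macroscopic $F$, (ii) the closure cost $p^{sT/2}$ dominates $\binom{|F|}{s}k^s$ for $s\le \alpha n$, and (iii) $\alpha$ remains strictly positive. The hypothesis \eqref{eq:shatter} is essential here: the gap between $d$ and $k\ln k$ is exactly what makes $T=\Theta(\log k)$ a viable threshold, small enough for $|F|$ to be linear yet large enough for the first-moment bound to close.
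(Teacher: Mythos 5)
The paper does not supply a proof of this statement; it is cited directly from Molloy \cite{molloy2012freezing}, so there is no internal argument to compare against. Your sketch does reconstruct the broad shape of the cited proof (planted model, an iterated peeling to a macroscopic ``protected'' core, and a first-moment bound ruling out locally consistent recolourings of the core), so the strategy is the right one.

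There is, however, a genuine logical gap in your third paragraph which you half-acknowledge but do not repair, and it is the crux. You write that ``inductively the vertices of $F$ on which $\sigma_j$ disagrees with $\btau$ form a consistent flip of size $\le \alpha n$.'' That is false: setting $D(\sigma)\coloneqq\{v\in F: \sigma(v)\neq\btau(v)\}$, each step along a path in $\cH(\bG,k,\alpha n)$ changes $D$ by at most $\alpha n$, but $|D(\sigma_j)|$ accumulates and can reach $j\alpha n$ with $j$ unbounded, so there is nothing to bound $|D(\sigma_j)|$ by $\alpha n$. The correct closure of the argument — present in Molloy and in Achlioptas--Coja-Oghlan, and only vaguely gestured at by your closing paragraph — is a \emph{forbidden band}: one proves via the first moment that w.h.p.\ there is \emph{no} proper colouring $\sigma$ of $\bG_{\mathrm{pl}}$ with $1\le|D(\sigma)|\le\alpha n$. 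Since $D(\btau)=\emptyset$ and adjacent colourings in $\cH(\bG,k,\alpha n)$ have symmetric difference of their $D$-sets bounded by $\alpha n$, a one-line induction along the path then forces $D(\sigma)=\emptyset$ for every $\sigma$ in the cluster of $\btau$, which is exactly $\alpha n$-frozenness of every $v\in F$. Your entropy-versus-closure-cost trade-off is indeed what makes the first-moment bound close over $1\le s\le\alpha n$, but absent the empty-band framing your argument does not rule out a long path of individually small steps that drifts the colouring to Hamming distance $\gg\alpha n$ on $F$; as written it does not establish frozenness. Since this is exactly the part you flag as ``the main obstacle'' and then leave unresolved, the proposal should be regarded as an outline with a known hole rather than a complete argument.
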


These results make a strong statement about the solution space of a random graph for $k\ln k < d$.
In the same papers, the opposite side of the threshold was also described. 
Suppose
\begin{equation}\label{eq:unshattered}
    d \le (1-o_k(1))k \ln k.
\end{equation}
\begin{thm}\label{thm:ach-loose}\cite[Theorem~5]{achlioptas2008algorithmic}
For $k,d$ as in \eqref{eq:unshattered},
in the uniformly random instance-solution pair $(\bG,\bsigma)$ w.h.p. every vertex is $o(n)$-loose.
\end{thm}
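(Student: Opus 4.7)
The plan is to construct, for every vertex $v$ and every target colour $j \in [k]$, a proper $k$-colouring $\sigma'$ with $\sigma'(v)=j$ and $\dist(\bsigma, \sigma') = o(n)$, via an iterative local recolouring whose footprint we control using Theorem~\ref{thm:main_simp}. First I would reduce to the sparse triangle-free setting: with high probability $\bG$ has maximum degree $\Delta = (1+o(1))d$ and contains only $o(n)$ triangles, so Theorem~\ref{thm:main_simp} applies on the triangle-free portion and yields $\esp{\ell_{\bsigma}(u)} \ge \ell \coloneqq \Delta^{\eps/2}$, together with the quantile bound $\pr{\ell_\bsigma(u) \le t} \le t/\ell$ inherited from \eqref{eq:list_tail_bound}.

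The recolouring is a BFS that builds a rooted tree $T$. At the root $v$ we wish to change $v$'s colour from $\bsigma(v)$ to $j$. For each vertex $u$ added to $T$, endowed with a forbidden set $F_u \subseteq [k]$ of size at most the depth of $u$ (collecting the new colours assigned at $u$'s already-recoloured neighbours in $T$), we try to pick a new colour $c_u \in L_{\bsigma}(u) \setminus F_u$. If such a $c_u$ exists we make $u$ a leaf and assign it $c_u$; otherwise $L_\bsigma(u) \subseteq F_u$, a \emph{short list} event forcing $\ell_\bsigma(u) \le |F_u|$, in which case we must first recolour one of $u$'s neighbours to free a colour outside $F_u$, and we add such a neighbour to $T$ as a child of $u$. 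Iterating until every branch is resolved at a leaf and applying the resulting simultaneous recolouring on $V(T)$ produces $\sigma'$; local triangle-freeness guarantees that the new assignments do not clash with each other.

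To bound $|V(T)|$, I would combine \eqref{eq:list_tail_bound} with the Bernoulli-domination concentration of Theorem~\ref{cor:ind_set_corr}: the quantile bound controls the per-vertex branching probability by $|F_u|/\ell$, which is sub-constant for depths up to $\polylog(n)$, while the Bernoulli-domination result ensures that only a sub-linear fraction of the $\Delta$ neighbours of any node in $T$ have short lists. Together they drive the BFS as a sub-critical branching process with $|V(T)| = \polylog(n) = o(n)$ with very high probability, and a union bound over the $nk$ pairs $(v,j)$ upgrades this to every vertex being $o(n)$-loose. The main obstacle is that, as the BFS proceeds, the lists $L_\bsigma(\cdot)$ at newly explored vertices are those of a partially rewritten colouring rather than of $\bsigma$ itself, so \eqref{eq:list_tail_bound} does not apply verbatim along the process. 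I would circumvent this by revealing $\bsigma$ only on the complement of $V(T)$ in advance and then applying the Coupon-Collector Lemma~\ref{lem:coupon-collector} step by step as each new vertex joins $T$, mirroring the resampling argument already used in the proof of Theorem~\ref{thm:main_simp}.
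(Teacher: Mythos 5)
Theorem~\ref{thm:ach-loose} is a citation to \cite[Theorem~5]{achlioptas2008algorithmic}; the present paper contains no proof of it, and the original argument in that reference works in the planted model and is methodologically unrelated to Theorem~\ref{thm:main_simp}. Your proposal instead tries to re-derive it from the paper's new machinery, but the very first reduction fails. You assert that w.h.p.\ $\bG=G_{n,dn/2}$ with $d$ constant has maximum degree $\Delta=(1+o(1))d$; this is false. The degree of a fixed vertex in $G_{n,dn/2}$ is approximately Poisson$(d)$-distributed and the maximum degree over all $n$ vertices is $\Theta(\ln n/\ln\ln n)$ w.h.p., hence unbounded in $n$. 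There is therefore no constant $\Delta$ with $\Delta\le(1-\eps)k\ln k$ bounding all degrees, and Theorem~\ref{thm:main_simp}, \eqref{eq:list_tail_bound}, and Theorem~\ref{cor:ind_set_corr} cannot be invoked on $\bG$ as you propose. The high-degree vertices are not ignorable: the target statement is that \emph{every} vertex is $o(n)$-loose.

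Even setting that aside, two further obstructions remain. First, a union bound over the $nk$ pairs $(v,j)$ requires a per-pair failure probability of $o(1/n)$, but \eqref{eq:list_tail_bound} and the $\ber(p)$-domination consequences of Theorem~\ref{cor:ind_set_corr} yield failure probabilities depending only on $\Delta$ (through $\ell=\Delta^{\eps/2}$), not on $n$. The authors say exactly this: Theorem~\ref{thm:recol} "does not directly strengthen" Theorems~\ref{thm:ach-loose} and~\ref{thm:mol-loose} because its guarantee decays in $\Delta$ rather than in $n$; obtaining an $n$-decaying bound is what forces the growing-girth hypothesis of Theorem~\ref{thm:girth-thawed} and the percolation argument of Lemma~\ref{lem:percolation} on a tree of depth $\Theta(\ln\ln n)$. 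Second, "only $o(n)$ triangles" does not license applying a theorem stated for triangle-free graphs: the induction behind Theorem~\ref{thm:main_simp} runs over all induced subgraphs, and the Coupon-Collector step requires $N(v)$ to be an independent set, so a single triangle at $v$ breaks it. The paper's actual workaround is the matching-deletion remark following Theorem~\ref{thm:girth-thawed}, which would have to be imported explicitly into your argument rather than glossed over.
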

\begin{thm}\label{thm:mol-loose}\cite[Theorem~2.4]{molloy2012freezing}
For $k,d$ as in \eqref{eq:unshattered}, there exists a constant $C>0$ such that w.h.p. the uniformly random instance-solution pair $(\bG,\bsigma)$ contains at most $o(n)$ vertices that are $(C \ln n)$-frozen. 
\end{thm}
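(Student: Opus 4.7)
The plan is to combine Theorem~\ref{thm:main_simp} with a recursive local-search recolouring procedure to show that almost every vertex admits a recolouring sequence of length $O(\ln n)$ changing its colour, hence is not $(C\ln n)$-frozen.

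First I would preprocess $\bG = G_{n,dn/2}$. Standard random graph estimates show that with high probability there is a set $B$ of $o(n)$ ``bad'' vertices outside of which the graph has maximum degree bounded by some constant $\Delta_0 = \Delta_0(d)$ and a triangle-free, tree-like neighbourhood up to depth $r \coloneqq C\ln n$. It suffices to show that no $v \notin B$ is $(C\ln n)$-frozen with high probability. Around such a $v$, Theorem~\ref{thm:main_simp} and the tail bound~\eqref{eq:list_tail_bound} apply to $\bsigma$ restricted to the ball, with $\ell \coloneqq \Delta_0^{\eps/2}$.

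Next I would set up a recursive recolouring procedure. Fix $v \notin B$ and a target colour $c \neq \bsigma(v)$. Grow a BFS tree $T_v$ rooted at $v$ with $v$ targeted to receive colour $c$. When processing a node $u$ with target colour $c_u$, examine each neighbour $w$ of $u$ currently coloured $c_u$; if $L_\bsigma(w)$ contains a colour outside the small ``forbidden'' set (targets already assigned to neighbours of $w$ in $T_v$), assign that colour to $w$ and close the branch; otherwise, append $w$ to $T_v$ with a new target colour chosen to minimise conflicts. After the recursion closes we obtain a proper $k$-colouring differing from $\bsigma$ on only the $|T_v|$ vertices of $T_v$, certifying that $v$ is not $|T_v|$-frozen.

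The size of $T_v$ is controlled via a subcritical branching analysis. A node $w$ is forced to continue only if $\ell_\bsigma(w) \le \Delta_0 + 1$, which by~\eqref{eq:list_tail_bound} has probability at most $(\Delta_0+1)/\ell = O(\Delta_0^{1-\eps/2})$. Conditioned on forcing, $w$ produces an expected $O(\Delta_0/k)$ offspring, since in a uniformly random colouring roughly $d_w/k$ neighbours of $w$ share any fixed colour. Hence the expected offspring per node is $O(\Delta_0^{2-\eps/2}/k)$, which is strictly less than some $\rho<1$ once $k$ is large enough --- as permitted by the assumption $d \le (1-o_k(1))k\ln k$. Standard tail bounds for subcritical Galton--Watson trees then give $\pr{|T_v| > C\ln n} \le n^{-2}$ for $C$ large, and a union bound over $v \notin B$ completes the argument.

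The main obstacle is justifying that the list variables $\ell_\bsigma(w)$ at successive levels of $T_v$ behave independently enough for the branching analysis, since they are all jointly determined by the single random colouring $\bsigma$. This is handled by the expose-and-resample technique underlying Theorem~\ref{thm:main_simp}: after exposing $\bsigma$ outside an appropriate ball around $v$, Lemma~\ref{lem:coupon-collector} lets us resample the colours inside without distorting the uniform distribution, and the tree-like preprocessing prevents dependency cycles among the resampled lists. A secondary technical point is that the target colours assigned along $T_v$ must be globally consistent so the final colouring is proper; this is ensured by exploiting the $\Omega(\ell)$ free choices at each closed branch.
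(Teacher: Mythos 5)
This statement is not proved in the paper at all: it is cited verbatim as Molloy's Theorem~2.4 from \cite{molloy2012freezing}, and the paper's own contribution is the strengthening in Theorem~\ref{thm:girth-thawed}, proved for deterministic graphs of girth at least $\ln\ln n$. There is therefore no "paper proof" to compare you against; the closest analogue in this paper is the proof of Theorem~\ref{thm:girth-thawed}, and the actual proof of Theorem~\ref{thm:mol-loose} in \cite{molloy2012freezing} goes through the planted model and second-moment machinery, a different toolkit from the one developed here.

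Your proposal has a genuine gap: you invoke Theorem~\ref{thm:main_simp} and the tail bound~\eqref{eq:list_tail_bound} for $\bsigma$, a uniform colouring of $\bG=G_{n,dn/2}$, but Theorem~\ref{thm:main_simp} requires the \emph{entire} graph to be triangle-free with bounded maximum degree, and $\bG$ is neither (it contains $\Theta(1)$ triangles and has maximum degree $\Theta(\ln n/\ln\ln n)$ w.h.p.). The preprocessing you describe does not repair this. First, the claimed preprocessing fact is false: outside any $o(n)$-size bad set you cannot have all depth-$(C\ln n)$ neighbourhoods tree-like and triangle-free, since for constant $d>1$ the random graph has exponentially many cycles of length $\Theta(\ln n)$ and they pass through a $\Theta(1)$ fraction of vertices; the paper only relies on approximate tree-likeness up to depth $o(\log_d n)$, which is much shallower. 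Second, and more fundamentally, $\ell_\bsigma(v)$ is a functional of the uniform colouring of the whole graph $\bG$; restricting $\bsigma$ to a ball around $v$ is not a uniform colouring of that ball, so Theorem~\ref{thm:main_simp} and~\eqref{eq:list_tail_bound} do not apply to it. This is exactly the obstruction that forces Achlioptas--Coja-Oghlan and Molloy into the planted model, and that this paper avoids only by assuming triangle-freeness/high girth and bounded degree of the fixed graph. Finally, your appeal to "expose-and-resample" to decouple the $\ell_\bsigma(w)$ along $T_v$ is not sufficient: the resampling trick in the proof of Theorem~\ref{thm:main_simp} decouples a single neighbourhood from its exterior, not the nested family of lists along a multi-level tree; in the deterministic setting the paper handles precisely this dependency via Theorem~\ref{cor:ind_set_corr} (Bernoulli-domination on independent sets) together with the percolation Lemma~\ref{lem:percolation}, and you would need an analogue of that argument, not a Galton--Watson heuristic.
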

We remark that Theorems~\ref{thm:ach-rigid} and~\ref{thm:ach-loose} also apply to $2$-colouring $k$-uniform hypergraphs and $k$-CNF. 
We build on Theorems~\ref{thm:ach-loose} and~\ref{thm:mol-loose}, showing that in fact that we can convert these into statements about uniformly random colourings of deterministic graphs of large enough girth. 

\subsubsection{Original Results on Solution Space Geometry}
Our first theorem is about triangle-free graphs, but note that it holds with probability $1$ as $d$ tends to infinity, rather than as $n$ tends to infinity. 
This is the best one could expect by only analysing the graph within a constant radius $r$ of a vertex $v$, because random fluctuations will occur with probability depending only on $r$ and $\Delta$. 
Analysing the graph at any radius larger than the girth is of course much more challenging.

\begin{thm}\label{thm:recol}
For all $\eps>0$ 
%there exists $\beta>0$ such that 
the following holds for all $k$ sufficiently large and $\Delta \le (1-\eps)k\ln k$. Let $G$ be a triangle-free graph of maximum degree at most $\Delta$ and let $\bsigma$ be the uniformly random proper $k$-colouring of $G$. 
Then for every vertex $v \in V(G)$ the following holds with probability at least $1-\mathrm{e}^{-\Delta^{\eps/3}}$:
\begin{enumerate}
    \item in $\bsigma$, $v$ is $(\Delta+1)$-loose;
    \item in $\bsigma$, $v$ is thawed.
\end{enumerate}
\end{thm}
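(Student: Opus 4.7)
The plan is to reduce both (1) and (2) to the single local event $\cE_v \coloneqq \{\forall u \in N(v),\ \ell_\bsigma(u) \ge 2\}$. I first argue that $\cE_v$ implies both conclusions, exploiting triangle-freeness of $G$. Fix any target colour $j \in [k]$. If $j \in L_\bsigma(v)$, recolour $v$ to $j$ -- a single-vertex move yielding a proper colouring at Hamming distance at most $1$. Otherwise, let $U_j \coloneqq N(v) \cap \bsigma^{-1}(j)$; triangle-freeness forces $U_j$ to be independent and none of its vertices to be adjacent to any other vertex of $N(v)$. For each $u \in U_j$, since $j \in L_\bsigma(u)$ and $\ell_\bsigma(u) \ge 2$, we pick $c_u \in L_\bsigma(u) \setminus \{j\}$. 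Define $\sigma'$ by $\sigma'(v) \coloneqq j$, $\sigma'(u) \coloneqq c_u$ for $u \in U_j$, and $\sigma'(w) \coloneqq \bsigma(w)$ elsewhere. Then $\sigma'$ is a proper $k$-colouring differing from $\bsigma$ on exactly $|U_j| + 1 \le \Delta + 1$ vertices, giving (1). For (2), process the vertices of $U_j$ sequentially (in any order) by recolouring each $u$ to $c_u$, and then finally recolour $v$ to $j$: each $c_u \in L_\bsigma(u)$ avoids every $\bsigma$-colour of $N(u)$, and by triangle-freeness no earlier recolouring touches $N(u)$, so every intermediate colouring is proper. This produces a path of single-vertex moves in $\cH(G)$, placing $\sigma'$ in the same $1$-cluster as $\bsigma$.

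It remains to prove $\pr{\cE_v^c} \le \mathrm{e}^{-\Delta^{\eps/3}}$. Plugging $t = 1$ into \eqref{eq:list_tail_bound} only yields the per-vertex bound $\pr{\ell_\bsigma(u) = 1} \le \Delta^{-\eps/2}$, and a naive union bound over $N(v)$ produces $\Delta^{1-\eps/2}$, which is nowhere near exponentially small. The plan is to invoke the Bernoulli-domination promised in Theorem~\ref{cor:ind_set_corr} together with a sharper per-vertex tail obtained by iterating the counting argument behind \eqref{eq:list_tail_bound}. Heuristically, $\ell_\bsigma(u)$ should behave like the number of uncollected coupons in a Coupon-Collector process with $|N(u)|$ nearly-uniform draws from $[k]$, concentrating exponentially around its mean, which Theorem~\ref{thm:main_simp} guarantees to be at least $\Delta^{\eps/2}$. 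Once we have an exponential per-vertex lower-tail bound of the form $\pr{\ell_\bsigma(u) \le 1} \le \mathrm{e}^{-\Delta^{\Omega(\eps)}}$, a union bound (or, equivalently, a Chernoff estimate on the dominating independent Bernoullis) over the at most $\Delta$ vertices of $N(v)$ immediately yields the claimed failure probability.

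The main obstacle is precisely this sharp lower-tail concentration for $\ell_\bsigma(u)$. Theorem~\ref{thm:main_simp} and Lemma~\ref{lem:coupon-collector} only supply first-moment information, and negative correlation -- the standard tool for transferring first-moment control into exponential concentration -- fails in our setting, as noted by the authors. Transferring Bernoulli-domination onto the colours appearing in $N(u)$ (the role announced for Theorem~\ref{cor:ind_set_corr}) and iterating \eqref{eq:list_tail_bound} within the Rosenfeld-style induction that underlies Theorem~\ref{thm:main_simp} is the technical heart of the argument, and the only step that genuinely needs more than the ingredients displayed in the introduction.
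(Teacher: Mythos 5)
Your reduction of both conclusions to the single event $\cE_v = \{\forall u\in N(v):\ \ell_\bsigma(u)\ge 2\}$ is correct and is essentially the argument the paper uses (the paper recolours all of $N(v)$ to avoid the target colour while you recolour only $U_j$; both work because $N(v)$ is independent in a triangle-free graph). The theorem's claim of $(\Delta+1)$-looseness and thawedness does follow from $\cE_v$ exactly as you describe.

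The gap is in the tail bound, which you acknowledge but do not close, and the route you gesture at is not quite the one that succeeds. Bernoulli-domination of $(\{\ell_\bsigma(u)\le t\})_{u\in N(v)}$ (Theorem~\ref{cor:ind_set_corr}) together with "iterating \eqref{eq:list_tail_bound}" does \emph{not} by itself give an exponential per-vertex bound on $\pr{\ell_\bsigma(u)\le 1}$. Those tools only control how many of the neighbours of a given vertex have short lists, not the lower tail of the list-size random variable itself. The missing ingredient in the paper is Lemma~\ref{lem:MOL-list-conc}: once the colouring of $G\setminus N[u]$ is fixed, the events $E_x=\{x\in\bsigma(N(u))\}$, $x\in L(u)$, \emph{are} negatively correlated (proved by a coupling argument that relies on $N(u)$ being independent), and the classical Chernoff bound for negatively correlated indicators then gives $\pr{\ell_\bsigma(u)\le(1-\delta)\ell'}\le \mathrm{e}^{-\delta^2\ell'/2}$ where $\ell'$ is the conditional mean. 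Your blanket assertion that negative correlation "fails in our setting" is therefore misleading: the failure the paper notes concerns short-list events at \emph{distinct} vertices, whereas the coupon-collector events at a single vertex (conditionally on the exterior colouring) are indeed negatively correlated. The actual proof of Corollary~\ref{cor:exp-list-conc} is the two-step composition: first, Bernoulli-domination plus Theorem~\ref{thm:concentration} bounds the probability that more than $O(t\Delta/\ell)$ vertices of $N(u)$ have short lists; second, conditionally on that good event, the conditional expectation $\esp{\ell_\bsigma(u)\mid\restrict{\bsigma}{G\setminus N[u]}}$ is still $\ge\ell$ by rerunning the Theorem~\ref{thm:main_simp} computation, and Lemma~\ref{lem:MOL-list-conc} supplies the exponential lower-tail. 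Without that second step your proposal does not reach the stated probability bound.

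One further, minor point: your $\bsigma$ is the uniformly random colouring of $G$, while the paper's proof works with the uniformly random colouring of $G\setminus v$ and implicitly transfers (the remark after Theorem~\ref{thm:main_simp} handles this, since extensions to $v$ bias towards colourings with larger $L_\bsigma(v)$). When invoking Theorem~\ref{cor:ind_set_corr} and Corollary~\ref{cor:exp-list-conc} you should make explicit which of the two randomness models you are in, since both results are stated for the $G\setminus v$ model.
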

In fact what we will show is that, with the above probability, one can change the colour of $v$ to any colour just by changing the colours of $v$ and its neighbours.

As mentioned above, due to the dependency of the probability on $d$, this does not directly strengthen Theorems~\ref{thm:ach-loose} and~\ref{thm:mol-loose}, although it does preclude the possibility of triangle-free graphs of maximum degree $\Delta$ with the properties guaranteed by Theorems~\ref{thm:ach-rigid} and~\ref{thm:mol-frozen}.
If we strengthen our girth condition, we do obtain strengthenings of  Theorems~\ref{thm:ach-loose} and~\ref{thm:mol-loose} (these are not possible for triangle-free graphs, see Section~\ref{sec:constructions}).

\begin{thm}\label{thm:girth-thawed}
 For all $\eps>0$ the following holds for all $k$ sufficiently large and $\Delta \le (1-\eps)k\ln k$. 
 Suppose $G$ is a graph on $n$ vertices with maximum degree $\Delta$ and girth at least $\ln\ln n$.
 Then for the uniformly random proper $k$-colouring $\bsigma$ of $G$, the following holds w.h.p. for every $v \in V(G)$:
 \begin{enumerate}
    \item in $\bsigma$, $v$ is $\bigO{(\ln n)^2}$-loose\footnote{With a more careful (and technical) analysis, the authors believe that this could be replaced with $O(\ln n)$-loose.};
    \item in $\bsigma$, $v$ is thawed.
\end{enumerate}
\end{thm}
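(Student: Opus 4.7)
The plan is to extend Theorem~\ref{thm:recol} by iterating its local recolouring procedure over progressively larger neighbourhoods of $v$, exploiting the girth condition to guarantee sufficient independence. Whereas Theorem~\ref{thm:recol} gives a per-vertex success probability of only $1 - \mathrm{e}^{-\Delta^{\eps/3}}$, which is too weak for a direct union bound when $\Delta$ is, say, polylogarithmic in $n$, the girth $\ge \ln \ln n$ lets us boost this to $1 - o(1/n)$ by extending the recolouring into a bounded-depth, tree-like region around $v$ in which essentially independent recolouring attempts can be made.

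First I would fix a vertex $v$ and a target colour $c$, and construct a \emph{recolouring tree} $R \subseteq V(G)$ rooted at $v$, to be built inside $B_r(v)$ for $r = \lfloor (\ln \ln n - 1)/2 \rfloor$, so that $B_r(v)$ itself induces a tree. Initialise $R = \{v\}$ with tentative target $c$. Whenever a vertex $u \in R$ has a target colour that conflicts with a neighbour $w \notin R$ still holding its $\bsigma$-colour, we try to resolve the conflict by choosing a fresh colour for $u$ from its augmented list; failing that, we add $w$ to $R$ with a suitable new target colour and recurse. Once the process terminates, the recolourings are performed from the leaves of $R$ back to $v$, one vertex at a time; because each chosen target lies in the relevant augmented list at the moment of execution, each move is a valid single-vertex move producing a proper colouring, which witnesses both the $|R|$-loose and the thawed conclusions simultaneously.

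The probabilistic analysis controls $|R|$ using the Bernoulli-domination framework of Theorem~\ref{cor:ind_set_corr}. By~\eqref{eq:list_tail_bound}, the probability that a given vertex has list of size at most $t$ is $O(t/\Delta^{\eps/2})$; combined with the joint-concentration statement of Theorem~\ref{cor:ind_set_corr}, this limits the frequency of "short-list" vertices along the branches of $R$. A Galton--Watson-style argument within $B_r(v)$ then shows that $|R| \le O((\ln n)^2)$ holds with probability $1 - o(1/n^2)$: roughly, the recursion depth is at most $O(\ln n)$ (each additional level reduces the failure probability by a polynomial-in-$\Delta$ factor), while the per-level branching stays sub-critical because typical conflicts are few. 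A union bound over $v \in V(G)$ and $c \in [k]$ then yields both conclusions w.h.p.

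The main obstacle is controlling the subtle dependencies between list sizes at different vertices of $\bsigma$; naive independence fails and negative correlation is also unavailable, which is precisely what the Bernoulli-domination framework of Section~\ref{sec:bernoulli-domination} is engineered for. A secondary technical task is the bookkeeping that ensures that the leaves-to-root ordering yields a valid sequence of single-vertex moves, but this is secured by construction: each vertex entering $R$ is assigned a colour which, by design, remains unused in its (updated) list at the moment of its own recolouring.
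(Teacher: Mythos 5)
Your high-level plan --- use the girth hypothesis to work inside a tree-like ball around $v$, boost the per-vertex success probability from $1-\mathrm{e}^{-\Delta^{\eps/3}}$ to $1-o(1/n)$ by exploiting near-independence in that ball, and then union bound --- is indeed the right conceptual skeleton, and your instinct to reach for the Bernoulli-domination machinery of Theorem~\ref{cor:ind_set_corr} is correct. However, the execution you describe diverges from the paper's proof and has genuine gaps.

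The paper's argument works \emph{outside in}, not inside out. It fixes a depth $g \approx 2\ln\ln n / \ln\Delta$ (note the crucial division by $\ln\Delta$, which your $r \approx (\ln\ln n)/2$ omits --- without it $\Delta^r$ need not be $O((\ln n)^2)$), and then recolours the shells $U_g, U_{g-1}, \dots, U_1$ one layer at a time: each shell $U_{g-i+1}$ is recoloured deterministically so as to maximise the list sizes at $U_{g-i}$. Because of the girth condition each shell is an independent set and each vertex of $U_{g-i+1}$ has a unique parent in $U_{g-i}$, so the layer-by-layer maximisation is well defined. Corollary~\ref{cor:find_good_col} then shows that a vertex of $U_{g-i}$ ends up with a short list only if at least $s-1$ of its children in $U_{g-i+1}$ had short lists. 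This is precisely the \emph{$(s-1)$-upward bootstrap percolation} structure of Lemma~\ref{lem:percolation}, which is the engine of the whole proof: combined with the $\ber(p)$-domination of the leaf-level short-list events (Theorem~\ref{cor:ind_set_corr}), it produces a \emph{doubly} exponentially small failure probability $\exp(-(s-1)^{\Omega(g)}) \le 1/n^3$. Your ``Galton--Watson-style argument'' gestures at a tree-process analysis, but the sketch --- ``each additional level reduces the failure probability by a polynomial-in-$\Delta$ factor'' and ``per-level branching stays sub-critical'' --- does not reproduce this. A polynomial gain per level over depth $O(\ln\ln n/\ln\Delta)$ only buys you a $\Delta^{O(\ln\ln n/\ln\Delta)} = \mathrm{polylog}(n)$ improvement, which is nowhere near $1/n^2$. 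The super-exponential gain requires the specific renormalisation structure of Corollary~\ref{cor:renorm_ber} and Lemma~\ref{lem:percolation}, which your write-up never invokes or reconstructs.

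The inside-out ``conflict-following recolouring tree'' $R$ is also not well defined as stated. When you add $w$ to $R$ because it conflicts with $u$'s target, $w$'s new target might in turn conflict with $u$ or with other already-placed vertices of $R$, and it is unclear why the process terminates, stays inside $B_r(v)$, or admits a branching factor that can be controlled. The paper avoids all of this by never following conflicts: it pre-processes the \emph{entire} ball $B_g(v)$ so that afterwards every $u\in N(v)$ has a list of size at least $2$, and then the single move (recolour each $u\in N(v)$ to dodge the target colour, then set $\bsigma(v)\gets c$) is immediate. This gives $\Delta^g = O((\ln n)^2)$-looseness directly, and thawedness by reading the layered recolouring as a sequence of single-vertex moves. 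Finally, the paper also has to handle the non-regular case by padding with dummy trees so that the percolation lemma (stated for $\Delta$-ary trees) applies; your proposal does not address this.
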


Once again what we actually show is that we can change the colour of $v$ to whatever we please by only changing the colours of vertices at distance at most $O(\ln \ln n/\ln \Delta)$ from $v$.
Theorem~\ref{thm:girth-thawed} implies that the reconfiguration graph of the proper $k$-colourings of $G$ (where the reconfiguration step is to change the colour of any vertex) contains a connected component that covers all but $o_\Delta(1)$ colourings.
To see that these are indeed strenghtenings, recall that  with high probability (assuming that $n$ is large enough) there are no triangles and all cycles of length at most $o(\log_d n)$ are vertex disjoint in the Erd\H{o}s-R\'enyi random graph $G_{n,dn/2}$  (this is shown for the random regular graph in \cite[Equation~(2.8)]{mckay2004short}).
Thus, in the proof of Theorem~\ref{thm:girth-thawed}, when we consider the set of vertices at distance at most $o(\log_d n)$ from some vertex $v$, we can make the induced graph a tree by deleting a matching. 
This will not affect the analysis. 
%This is very strong, it says that for almost every colouring, we can change the colour of any vertex to whatever we want, by recolouring one vertex at a time (giving a proper colouring at each step).  
%Or to phrase it another way, almost every colouring is contained in a cluster whose vertex projections are everything. 
Theorem~\ref{thm:girth-thawed} directly strengthens Theorem~\ref{thm:mol-loose}, by replacing $(C \ln n)$-frozen by $1$-frozen (in \cite{achlioptas2008algorithmic} it had already been remarked that $O(\ln n)$-looseness is possible).

We conjecture that the global connectivity properties proved in \cite{achlioptas2008algorithmic} should also hold for all triangle-free graphs. 
\begin{conj}
For $\eps,\Delta,k$ and $G$ as in Theorem~\ref{thm:main_simp}, there exists a cluster of $\cH(G)$ containing at least a $(1-o_\Delta(1))$-proportion of $\sC_k(G)$.
\end{conj}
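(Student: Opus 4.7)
The plan is to reformulate the conjecture as a pairwise statement: letting $\bsigma_1, \bsigma_2$ be independent uniformly random elements of $\sC_k(G)$ and writing $\bsigma_1 \sim \bsigma_2$ when they lie in the same cluster of $\cH(G, k, 1)$, one computes
\[
\pr{\bsigma_1 \sim \bsigma_2} = \sum_C \pth{\frac{|C|}{|\sC_k(G)|}}^2.
\]
Since $\sum_C p_C^2 \le \max_C p_C$ whenever $\sum_C p_C = 1$, the conjecture is equivalent to showing that $\pr{\bsigma_1 \sim \bsigma_2} \ge 1 - o_\Delta(1)$, and this is the form I would aim to establish.

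The natural approach is a path-coupling argument. Fix an ordering $v_1, \ldots, v_n$ of $V(G)$ and inductively build an intermediate colouring $\bsigma^{(i)}$ that agrees with $\bsigma_2$ on $\{v_1, \ldots, v_i\}$ and lies in the same cluster as $\bsigma_1$ in $\cH(G,k,1)$. At step $i+1$, if $\bsigma^{(i)}(v_{i+1}) \ne \bsigma_2(v_{i+1})$, invoke the thawedness half of Theorem~\ref{thm:recol} on $\bsigma^{(i)}$ to find a sequence of single-vertex recolourings reaching a colouring with value $\bsigma_2(v_{i+1})$ at $v_{i+1}$, and append these to the running path from $\bsigma_1$. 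If the ordering is chosen compatibly with a partition of $V(G)$ into independent sets of bounded radius, the single-vertex changes along the thawing sequence could in principle be constrained to avoid $\{v_1, \ldots, v_i\}$.

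The main obstacle is twofold and explains why the statement is a genuine conjecture rather than a routine corollary. First, the thawedness guarantee of Theorem~\ref{thm:recol} is purely existential: it does not pin the thawing sequence to a bounded neighbourhood of $v_{i+1}$, yet such locality is essential to preserve values already fixed on $\{v_1,\ldots,v_i\}$. A localised strengthening would need to be extracted, perhaps by inducting on the radius inside the proof of Theorem~\ref{thm:recol}. Second, and more seriously, Theorem~\ref{thm:recol} applies to a uniformly random colouring of $G$, whereas $\bsigma^{(i)}$ is uniform only conditional on its restriction to $\{v_1, \ldots, v_i\}$. A conditional analogue of Theorem~\ref{thm:main_simp} --- stating that its first-moment lower bound on $\ell_\bsigma(v)$ persists when a bounded number of vertices are pre-coloured --- would be required. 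The Bernoulli-domination framework of Section~\ref{sec:bernoulli-domination} is a plausible starting point, since its output is compatible with conditioning on bounded-size subsets, but pushing it through the Rosenfeld-style induction driving Theorem~\ref{thm:main_simp} is, in my view, the hardest step.
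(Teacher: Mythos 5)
The paper offers no proof of this statement; it is explicitly presented as an open conjecture, and you correctly treat your submission as a sketch with acknowledged gaps rather than a proof. Your pairwise reformulation is sound: with $p_C = |C|/|\sC_k(G)|$ over clusters $C$ of $\cH(G)$, one has $\pr{\bsigma_1 \sim \bsigma_2} = \sum_C p_C^2$, and from $\sum_C p_C^2 \le \max_C p_C \le \sqrt{\sum_C p_C^2}$ the existence of a dominant cluster is equivalent (up to a constant inside the $o_\Delta(1)$) to $\pr{\bsigma_1 \sim \bsigma_2} \ge 1 - o_\Delta(1)$. The path-coupling idea is natural, but your first stated obstacle is already overcome by the paper: the remark following Theorem~\ref{thm:recol} says the thawing sequence can be confined to $N[v]$, i.e.\ $(\Delta+1)$-looseness, so locality is not the issue.

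The genuine obstacles are your second one and a third you do not isolate. The second (non-uniformity of $\bsigma^{(i)}$ after conditioning on its restriction to $\{v_1,\ldots,v_i\}$) is real; neither Theorem~\ref{thm:main_simp} nor the Bernoulli-domination framework controls a uniform colouring conditioned on an arbitrary partial assignment, and pushing the Rosenfeld induction through such conditioning is, as you say, the hard step. The third obstacle is, in my view, decisive: the per-vertex failure probability in Theorem~\ref{thm:recol} is $\exp(-\Delta^{\eps/3})$, a quantity depending only on $\Delta$ and $\eps$, and your path-coupling visits all $n$ vertices, so a union bound yields failure probability of order $n\exp(-\Delta^{\eps/3})$, which is vacuous once $n \gg \exp(\Delta^{\eps/3})$. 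The conjecture, however, demands an $o_\Delta(1)$ bound uniform over all triangle-free $G$ of maximum degree at most $\Delta$, hence uniform in $n$. Any viable route must avoid paying the factor $n$ outright --- either by confining disagreement to a sublinear set at each stage so that local failures can be absorbed, or by directly bounding the global measure of colourings that fail to be locally thawable at \emph{some} vertex. Neither is within reach of the first-moment machinery of the paper, which is presumably why the authors left the statement as a conjecture.
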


\subsection{Local List Colouring and Density}\label{sec:local_list}
Given a  graph $G$, 
%a \emph{proper $k$-colouring} of $G$ is a partition of $V(G)$ into $k$ independent sets, and the chromatic number $\chi(G)$ of $G$ is the minimum $k$ such that there exists a $k$-colouring of $G$. 
a \emph{list-assignment} of $G$ is a map $L\colon V(G) \rightarrow 2^{\mathbb{N}}$, and a \emph{proper $L$-colouring} $\sigma$ of $G$ is a map $c\colon V(G)\rightarrow \mathbb{N}$ such that $c(v)\in L(v)$ for every vertex $v \in V(G)$, and $c(u)\neq c(v)$ for every edge $uv\in E(G)$. Thus list-colouring is a CSP which includes colouring as a special case.
If there exists an $L$-colouring of $G$, we say that $G$ is \emph{$L$-colourable}.
The \emph{list chromatic number} of $G$, denoted $\chi_\ell(G)$, is the minimum $k$ such that $G$ is $L$-colourable for every list-assignment $L$ with $|L(v)|\ge k$ for every vertex $v\in V(G)$.
Note that the classical chromatic number $\chi(G)$ is always at most $\chi_\ell(G)$.

%Colouring triangle-free graphs is a notorious problem which emerged with the work of Erd\H os.
In a seminal result, Johansson \cite{Joh96} showed that the list chromatic number of triangle-free graphs of maximum degree $\Delta$ never exceeds $O(\Delta/\ln \Delta)$ as $\Delta \to \infty$. Two decades later, Molloy \cite{Mol19} showed with the help of entropy compression that this bound can be tightened to $(1+o(1))\Delta/\ln \Delta$ as $\Delta\to\infty$;  Bernshteyn \cite{Ber19} then showed that it was possible to replace the use of entropy compression with an application of the lopsided Lov\' asz Local Lemma and obtain a similar result. Following that breakthrough, there has been a lot of interest for triangle-free graph colourings and extensions \cite{AIS19, ABD21+, BKNP18+, DJKP20, DKPS20+}.

Although quite different, all the proofs in those works rely on a $2$-step colouring procedure. 
This involves first finding a partial proper colouring with appropriate properties and then extending it in a second step. 
% The first step is the one that varies the most among those proofs; given a graph $G$ and a list-assignment $L$, it consists in constructing a (pseudo-)random partial $L$-colouring $\sigma_0$ of $G$. In the work of Johansson \cite{Joh96}, that partial colouring was obtained with an iterative colouring procedure and analysed with a nibble argument, while more recent proofs rely on a more straightforward construction of that partial colouring; for instance Bernshteyn picks a partial colouring uniformly at random in his proof \cite{Ber19}.
% After that first step, the set $V_1$ of uncoloured vertices induces a new list-assignment $L_1\colon V_1 \to 2^\mathbb{N}$. After showing that with positive probability, the maximum colour degree in that instance is sufficiently smaller than the minimum list size, the second step consists in choosing simultaneously one colour at random from $L_1(v)$ for each uncoloured vertex $v\in V_1$; this returns a proper $L_1$-colouring of $G[V_1]$, 
% and hence a proper $L$-colouring of $G$, 
% with positive probability as can be shown using the Lov\'asz Local Lemma. 
% More formally, the required condition for the above to hold is that there exists a parameter $\ell \ge 0$ such that for every uncoloured vertex $v\in V_1$, one has $|L_1(v)|\ge \ell$, and moreover for every colour $x\in L_1(v)$ there are at most $\ell/(2e)$ uncoloured neighbours $u \in N(v) \cap V_1$ of $v$ such that $x\in L_1(u)$.
% %
This second step is usually called the Finishing Blow. 
In contrast, our proof does not require the use of the Finishing Blow, and is the first of that nature to the best of our knowledge. 
Our result also includes graphs where the density is bounded within each neighbourhood, a natural extension of triangle-free graphs. The first result in that vein is due to Alon, Krivelevich, and Sudakov \cite{AKS99}. 
They relied on the result of Johansson in order to prove that if a graph $G$ of maximum degree $\Delta$ is such that each neighbourhood spans at most $\Delta^2/f$ edges, where $1\le f \le  \Delta^2+1$, then its chromatic number is at most $O(\Delta/\ln f)$ as $f\to \infty$. This result was then extended to the list chromatic number by Vu \cite{Vu02}.
More recently, it has been proved by the second author together with Davies, Kang, and Sereni \cite[Corrolary 24]{DKPS20+}, that this upper bound can be tightened to $(2+\eps) \Delta/ \ln f$, provided that $f \ge (\ln \Delta)^{2/\eps}$, for every $\eps>0$.
Our main theorem is a strengthening of these results which is close to being optimal.

We were inspired by a counting argument which was first used in the context of graph colouring by Rosenfeld \cite{Ros20} (see \cite[Section 3.5]{Wo20} for an introduction to the method), and later devised more generally for hypergraph colouring by Wanless and Wood \cite{WaWo20+}.
We note however that our proof deviates from the standard arguments in order to exploit the relationship between uniformly random colourings of a graph $G$ and uniformly random colourings of induced subgraphs $H\subset G$.

Given a list-assignment $L\colon V(G)\to 2^\mathbb{N}$ of $G$, a proper $L$-colouring $\sigma$ of a subgraph $H$ of $G$ and a vertex $v\in V(G)$, we denote $L_\sigma(v) \coloneqq L(v) \setminus \sigma(N_H(v))$ the set of colours available at $v$ given $\sigma$, and $\ell_\sigma(v) \coloneqq |L_\sigma(v)|$ its size.

\begin{thm}\label{thm:main}
Let $G$ be a graph of maximum degree $\Delta$ such that every graph induced by a neighbourhood in $G$ has average degree at most $d\le \frac{\Delta}{6}-1$. Write $\rho \coloneqq \Delta/(d+1)$ and let $\ell \ge (d+1)(\ln \rho)^3$. Then for every list-assignment  $L\colon V(G)\to 2^\mathbb{N}$ with 
\[|L(v)| \ge \pth{1+\frac{2}{\ln \rho}} \frac{\deg(v)}{W\pth{\frac{\deg(v)}{\ell}}}\]
for every vertex $v\in V(G)$, the following holds.
For all $v \in V(G)$, the uniformly random proper $L$-colouring $\bsigma$ of $G\setminus v$ satisfies $\esp{\ell_\bsigma(v)} \ge  \ell$.
\end{thm}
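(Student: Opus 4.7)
The plan is to mirror the induction-plus-Coupon-Collector scheme of the proof of Theorem~\ref{thm:main_simp}: induct on $|V(G)|$, pick a vertex $v$, set $H \coloneqq G \setminus v$, and let $\bsigma$ be the uniformly random proper $L$-colouring of $H$. Three features distinguish the setting of Theorem~\ref{thm:main}: (i) the lists are sized by the Lambert $W$ function rather than uniform; (ii) neighbourhoods may carry up to $d\,\deg(v)/2$ edges; and (iii) explicit constants have to be tracked. Removing $v$ preserves all hypotheses on $H$: the bounds on maximum degree and neighbourhood density are unchanged, and since $\deg_H(u) \leq \deg_G(u)$ and the map $x \mapsto x/W(x/\ell)$ is increasing (which follows from $y/W(y) = \mathrm{e}^{W(y)}$), the list-size lower bound also survives. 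So the induction hypothesis applied to $H$ and every $u \in N(v)$ yields, exactly as in~\eqref{eq:list_tail_bound},
\[
\pr{\ell_\bsigma(u) \leq t} \leq \frac{t}{\ell}
\]
for every positive integer $t$, and hence $\esp{|S_\bsigma|} \leq \deg(v)\,t/\ell$ for the set $S_\bsigma$ of short-list neighbours of $v$.

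The next step is to apply a Coupon-Collector-type bound to the random lists $(L_\bsigma(u))_{u\in N(v)}$. This is where the general case parts ways with Theorem~\ref{thm:main_simp}: when $N(v)$ contains edges, the coupling used there --- resample each $\bsigma(u)$ independently and uniformly in $L_\bsigma(u)$ --- is incompatible with properness, because the lists now genuinely depend on $\bsigma$ at neighbours of $u$ inside $N(v)$, leaving only \emph{marginal} uniformity of each $\bsigma(u)$ on $L_\bsigma(u)$. A natural strengthening of Lemma~\ref{lem:coupon-collector}, of the kind catalogued in Subsection~\ref{sec:coupon-collector}, handles this setting by absorbing the dependencies coming from internal edges into a correction controlled by $d$. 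Granted such a variant, applied with threshold $t$, one obtains
\[
\esp{\ell_\bsigma(v)} \geq k_0\exp\!\left(-\Bigl(1+\tfrac{1}{t}\Bigr)\tfrac{\deg(v)}{k_0}\right),
\]
with $k_0$ equal to $|L(v)|$ minus corrections of order $\deg(v)\,t/\ell + O(d)$.

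Finally I would close the induction by unpacking the hypothesis via $W$. Writing $k \coloneqq |L(v)|$ and setting $x = \deg(v)/\ell$ in $W(x)\mathrm{e}^{W(x)} = x$, the list-size hypothesis rearranges, up to a $1 + O(1/\ln\rho)$ factor, into $k\mathrm{e}^{-\deg(v)/k} \geq (1+2/\ln\rho)\,\ell$. Taking $t \coloneqq \ceil{\ln\rho}$ makes the $(1+1/t)$ factor in the Coupon-Collector exponent cost only a $1 + O(1/\ln\rho)$ in the prefactor; the cushion $\ell \geq (d+1)(\ln\rho)^3$ then ensures that the replacement $k \to k_0$ and the $O(d)$ density correction each cost at most $1 + O(1/\ln\rho)$ as well. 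All three losses fit comfortably inside the $2/\ln\rho$ slack built into the hypothesis, and the induction closes with $\esp{\ell_\bsigma(v)} \geq \ell$. The hardest part is clearly the Coupon-Collector step once $N(v)$ carries edges: the clean triangle-free resampling coupling no longer preserves the distribution of $\bsigma$, and one must instead bound $\esp{\ell_\bsigma(v)}$ using only marginal uniformity together with the density bound $d$ to control correlations. The Lambert-$W$ calibration itself is routine once $t$ has been chosen $\asymp \ln\rho$.
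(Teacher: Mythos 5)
Your architecture is the right one, and you correctly put your finger on the crux: once $N(v)$ contains edges, the clean ``resample each $\bsigma(u)$ independently in $L_\bsigma(u)$'' coupling of Theorem~\ref{thm:main_simp} is no longer measure-preserving. That is exactly the obstacle the paper has to overcome. But you then write ``granted such a variant'' and move on, and this is a genuine gap: the entire added content of Theorem~\ref{thm:main} over Theorem~\ref{thm:main_simp} is precisely that variant, which the paper supplies as a \emph{Generalised Coupon-Collector Lemma} (Lemma~\ref{lem:coupon-collector-gen} in the Appendix, not in Subsection~\ref{sec:coupon-collector} as you suggest). Moreover, your sketch of what the variant should do --- ``bound $\esp{\ell_\bsigma(v)}$ using only marginal uniformity together with the density bound $d$ to control correlations,'' ``absorbing the dependencies into a correction controlled by $d$'' --- does not match the actual mechanism and would be hard to make work on its own. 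The paper does not argue via marginal uniformity at all. Instead it freezes $\bsigma_0 \coloneqq \restrict{\bsigma}{G\setminus N[v]}$, observes this induces a random list-assignment $L_{\bsigma_0}$ on $H[N(v)]$, and then reveals $\bsigma_1$ as a \emph{uniformly random proper $L_{\bsigma_0}$-colouring of the graph $H[N(v)]$}; since $\bsigma_0\cup\bsigma_1$ has the same law as $\bsigma$, the measure-preservation issue is dissolved rather than corrected. The generalised lemma then lower-bounds $\esp{|[k]\setminus\bsigma_1(N(v))|}$ by bounding, for each colour $x$, the avoidance probability under a uniform proper colouring of $H[N(v)]$ via Lemma~\ref{lem:maxdeg}: one gets $\prod_{u}\bigl(1-\tfrac{1}{|L_0(u)|-\deg_B(u)}\bigr)$, which is where the internal degrees genuinely enter. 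This is a concrete structural argument, not a perturbative correction of order $O(d)$.

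A secondary but consequential imprecision: the short-list threshold is not the uniform $t\approx\ln\rho$ you use. The paper defines, for each $u\in N(v)$ of internal degree $d_u$ within $H[N(v)]$, a threshold $t_u=(d_u+1)(\ln\rho+1)$, which is precisely the form $(\deg_{H[N(v)]}(u)+1)(t+1)$ that Lemma~\ref{lem:coupon-collector-gen} requires (with its parameter $t=\ln\rho$). The induction hypothesis is then applied as $\pr{\ell_\bsigma(u)\le t_u}\le t_u/\ell$, and the average of $t_u$ over $N(v)$ is controlled by $t=(d+1)(\ln\rho+1)$ using the neighbourhood average-degree hypothesis. Using a flat $\ceil{\ln\rho}$ threshold would not feed correctly into the generalised lemma. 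Finally, a small note: your observation that $x\mapsto x/W(x/\ell)$ is increasing is true but superfluous; the induction hypothesis~\eqref{eq:HI} is stated for all induced subgraphs of $G$ with the list-assignment $L$ inherited from $G$, so there is nothing to re-verify when passing to $H=G\setminus v$.
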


As before, the conclusion also holds for the uniformly random $k$-colouring of $G$.
In the statement of Theorem~\ref{thm:main}, we use the $W$-Lambert function $z\mapsto W(z)$ which is defined as the reciprocal of the function $z \mapsto z\mathrm{e}^z$. In the proof of Theorem~\ref{thm:main}, we will use the well-known fact that $\mathrm{e}^{W(z)} = z/W(z)$. Moreover, we note that $W(z) = \ln z - \ln \ln z + o(1)$ as $z\to \infty$; hence, by fixing $d\coloneqq 0$, Theorem~\ref{thm:main} implies Theorem~\ref{thm:main_simp}. Since we have already given a proof of Theorem~\ref{thm:main_simp}, whose statement suffices for most applications in this paper, we defer the proof of Theorem~\ref{thm:main} to the Appendix.
%, which in turns implies the result of Molloy for triangle-free graphs $G$ of maximum degree $\Delta$, namely $\chi_\ell(G) \le (1+o(1))\Delta/\ln \Delta$. 

In the context of Vu's result \cite{Vu02}, Theorem~\ref{thm:main} implies the following; this is proved in the Appendix.
\begin{corollary}
\label{cor:vu}
Let $G$ be a graph of maximum degree $\Delta$, such that every neighbourhood spans at most $\Delta^2/f$ edges, for some $1 \le f \le \Delta^2+1$. Then
$ \chi_\ell(G) \le (1+o(1)) \frac{\Delta}{\ln \min \{\Delta,f\}}$ as $f\to \infty$.
\end{corollary}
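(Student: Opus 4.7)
The plan is to apply Theorem~\ref{thm:main} to a lightly padded supergraph $G'$ of $G$ in which every vertex has either degree exactly $\Delta$ or degree exactly $1$. Explicitly, I would attach $\Delta-\deg_G(v)$ fresh pendant vertices to each $v\in V(G)$ with $\deg_G(v)<\Delta$; then $G\subseteq G'$, so $\chi_\ell(G)\le\chi_\ell(G')$. The graph $G'$ still has maximum degree $\Delta$, and for each original $v$ the induced subgraph $G'[N_{G'}(v)]$ consists of $G[N_G(v)]$ together with isolated leaves attached to $v$; it thus has the same at most $\Delta^2/f$ edges, now distributed over exactly $\Delta$ vertices. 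For a leaf $u$, $G'[N_{G'}(u)]$ has one vertex and no edges. Consequently every neighbourhood in $G'$ has average degree at most $2\Delta/f$.

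I would then apply Theorem~\ref{thm:main} to $G'$ with $d \coloneqq 2\Delta/f$ (valid once $f$ is large enough that $d\le\Delta/6-1$) and $\rho = \Delta/(d+1) = \Delta f/(f+2\Delta)$. The reason the padding is essential is that, without it, the worst neighbourhood average degree could reach $\Theta(\Delta/\sqrt{f})$ at a vertex of degree $\Theta(\Delta\sqrt{2/f})$ with a near-complete neighbourhood, forcing $\ln\rho\approx\tfrac{1}{2}\ln f$ and losing a factor $2$ in the final list-size bound; padding ensures the binding vertices all have degree $\Delta$, so $\rho\ge f/3$ when $f\le\Delta$ (giving $\ln\rho=(1-o_f(1))\ln f$) and $\rho\ge\Delta/3$ when $f>\Delta$ (giving $\ln\rho=(1-o_f(1))\ln\Delta$). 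In the first regime I would set $\ell \coloneqq \lceil 3(\Delta/f)(\ln f)^3\rceil$ and in the second $\ell \coloneqq \lceil 3(\ln\Delta)^3\rceil$; in both cases $\ell\ge(d+1)(\ln\rho)^3$, and the expansion $W(z)=\ln z-\ln\ln z+o(1)$ yields $W(\Delta/\ell) = (1-o_f(1))\ln\min\{\Delta,f\}$.

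To conclude, I would use that $s\mapsto s/W(s/\ell)=\ell\,\mathrm{e}^{W(s/\ell)}$ is strictly increasing in $s\ge 0$ (since $W$ is increasing on $[0,\infty)$), so the largest required list size $(1+2/\ln\rho)\deg_{G'}(v)/W(\deg_{G'}(v)/\ell)$ is attained at $\deg_{G'}(v)=\Delta$ and equals $(1+o_f(1))\Delta/\ln\min\{\Delta,f\}$; a leaf requires only $(1+o_f(1))\ell$ colours, which is asymptotically smaller. Theorem~\ref{thm:main} thus certifies that $G'$ (and hence $G$) is $L$-colourable for every list-assignment of this uniform size, which is exactly the corollary. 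The principal obstacle I anticipate is precisely the factor-of-two issue addressed by the padding step: a direct application of Theorem~\ref{thm:main} to $G$ would be suboptimal because low-degree vertices with dense neighbourhoods inflate the worst-case $d$, and the padding is what restores the tight asymptotic constant.
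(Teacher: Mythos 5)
Your proof is correct and reaches the same conclusion by a genuinely self-contained route. The paper handles the same obstacle (low-degree vertices with dense neighbourhoods inflating the worst-case neighbourhood density, which would cost a factor $2$ in $\ln\rho$) by invoking a known $\Delta$-regularisation: it cites a construction producing a $\Delta$-regular supergraph $H\supseteq G$ in which each $H[N(v)]$ has the same number of edges as the corresponding $G[N(\varphi(v))]$, so that every vertex has degree exactly $\Delta$ and all the list-size constraints in Theorem~\ref{thm:main} collapse to the single one at degree $\Delta$. Your pendant-vertex padding achieves the same goal for the original vertices, but since the new leaves have degree $1$ the resulting graph is not regular, so you additionally need (and correctly supply) the observation that $s\mapsto s/W(s/\ell)=\ell\,\mathrm{e}^{W(s/\ell)}$ is increasing, so the degree-$\Delta$ constraint dominates. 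The net effect is the same asymptotic bound; what your version buys is an elementary gadget that avoids the external reference, at the cost of an extra (easy) monotonicity check, and it makes explicit how the local, degree-dependent form of Theorem~\ref{thm:main} is being exploited. Both approaches correctly identify padding to degree $\Delta$ as the essential step to avoid losing a factor of $2$ in $\ln f$.
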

It has been observed in \cite{DJKP20} that there exist graphs satisfying the hypothesis of Corollary~\ref{cor:vu} and with chromatic number $(\frac{1}{2}-o(1))\Delta / \ln \min\{f,\Delta\}$. So the bound in Corollary~\ref{cor:vu} is sharp up to an asymptotic factor $2$. Reducing this gap would constitute a breakthrough in Ramsey theory, as this would imply an improvement of the estimate of the off-diagonal Ramsey numbers $R(3,t)$ for large $t\in \mathbb{N}$, a long-standing open problem.

We note that the results of this section also hold in the context of DP-colouring, an interesting extension of list colouring introduced by Dvo\v r\' ak and Postle \cite{DvPo18}, which is once again a CSP. 
We have decided to state those results in the context of list colouring in order to avoid the verbose formalism of DP-colouring, but the proof of the results in this section readily adapt to that context.

\subsection{Partition Functions and Approximation Schemes}\label{ref:partition functions}
% One can view graph $k$-colouring as the $k$-state antiferromagnetic Potts model from statistical physics.
% Here antiferromagentic simply means that adjacent vertices want to have different states (colours). 
% There is an additional temperature parameter in the Potts model. 
% At higher temperatures, the state of each vertex jumps around a lot and thus monochromatic edges are more likely. 
% If we only consider proper colourings then this is equivalent to the Potts model at  temperature $0$, where because there is no heat, the system has no tolerance for monochromatic edges.
% The \emph{partition function} of the Potts model simply counts the weighted sum of states of the system, and serves as the normalising constant when we write the probability of seeing a particular state. 
% Thus for temperature $0$ Potts model, the partition function is simply the number of colourings. 

% We note that Theorem~\ref{thm:main} applied with $\ell=\sqrt{\Delta}$ implies the same bound on the number of $2q$-colourings, so with twice as many colours. 
% We now argue that we can slightly adapt the proof of Theorem~\ref{thm:main} to give a bound that matches the number of colourings given by Theorem~\ref{thm:Anton} and that applies in the more general setting of Theorem~\ref{thm:main}.

Partition functions are important objects in statistical physics.
They count the weighted number of states of a system where said weight is proportional to the probability of seeing the system in said state. 
Thus they also serve as the normalising constant when we write the probability of seeing a particular state.
It seems natural to expect that our lower bound on the expected number of colours available at each vertex would yield a lower bound on the number of colourings and thus said partition function. 
By slightly adjusting the proof of Theorem~\ref{thm:main} we can show the following.

\begin{thm}\label{thm:count}
Let $G$ be an $n$-vertex graph of maximum degree $\Delta$ such that every graph induced by a neighbourhood in $G$ has average degree at most $d \le \frac{\Delta}{6}-1$. Let $f \coloneqq \Delta/(d+1)$ and
suppose $L\colon V(G)\to 2^\mathbb{N}$ is a list-assignment  with $|L(v)| \ge \pth{1+\frac{1}{\ln \rho}} q(v)$, where  
\[q(v) \ge \pth{1+\frac{1}{\ln \rho}} \frac{\deg(v)}{W\pth{\frac{\deg(v)}{(d+1)(\ln \rho)^3}}}\]
 for every vertex $v\in V(G)$.
Then there are at least $(q\big/\sqrt{D/(d+1)})^n$
 proper $L$-colourings of $G$, where $D$ is the geometric mean of the degrees in $G$, and $q$ is the geometric mean of $\{q(v)\}_{v\in V(G)}$.
\end{thm}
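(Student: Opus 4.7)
The approach is to reduce to (a refinement of) Theorem~\ref{thm:main} via a vertex-deletion identity, and then recover the extra factor $\sqrt{D/(d+1)}$ by averaging over a uniformly random vertex ordering. For any ordering $v_1,\dots,v_n$ of $V(G)$, set $G_{i+1} \coloneqq G[\{v_i,\dots,v_n\}]$ (with $G_{n+1}$ empty) and let $\bsigma_{i+1}$ denote the uniformly random proper $L$-colouring of $G_{i+1}$. The basic identity is
\[
|\sC(G)| = \prod_{i=1}^n \frac{|\sC(G_i)|}{|\sC(G_{i+1})|} = \prod_{i=1}^n \esp{\ell_{\bsigma_{i+1}}(v_i)}.
\]

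The first step is to re-run the proof of Theorem~\ref{thm:main} at the vertex $v_i$ inside $G_{i+1}$, keeping the actual backward degree $d_i^\star \coloneqq \deg_{G_{i+1}}(v_i)$ rather than collapsing it to $\Delta$. Picking the Coupon-Collector threshold $t \coloneqq \ln\rho$ and applying inequality~\eqref{eq:list_tail_bound} to the neighbours $u \in N_{G_{i+1}}(v_i)$, the expected number of short lists at these neighbours is at most $d_i^\star t/\ell$, which is negligible compared to $|L(v_i)|$. Hence $k_0 \ge q(v_i)$, and Lemma~\ref{lem:coupon-collector} yields
\[
\esp{\ell_{\bsigma_{i+1}}(v_i)} \ge q(v_i)\exp\pth{-(1+1/\ln\rho)\,d_i^\star/q(v_i)}.
\]
The milder list-size hypothesis $|L(v)| \ge (1+1/\ln\rho)q(v)$ (versus $(1+2/\ln\rho)$ in Theorem~\ref{thm:main}) provides exactly the slack to bring $k_0$ down to $q(v_i)$ in the exponent.

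Taking logarithms and summing gives
\[
\ln|\sC(G)| \ge \sum_{i=1}^n \ln q(v_i) - (1+1/\ln\rho)\sum_{i=1}^n d_i^\star/q(v_i).
\]
Since the left-hand side is independent of the ordering, the right-hand side may be averaged over a uniformly random permutation $\pi$ of $V(G)$. Under such a permutation each neighbour of $v$ ends up strictly after $v$ with probability $1/2$, so $\mathbb{E}_\pi[d^\star_\pi(v)] = \deg_G(v)/2$, and the second sum becomes $\sum_v \deg_G(v)/(2q(v))$. The definition of $q(v)$ combined with $W(z) = \ln z - \ln\ln z + o(1)$ gives
\[
\deg_G(v)/q(v) \le W\!\pth{\deg_G(v)/((d+1)(\ln\rho)^3)} \le \ln\!\pth{\deg_G(v)/(d+1)} - (3-o(1))\ln\ln\rho,
\]
and the $\ln\ln\rho$-surplus comfortably absorbs the $1/\ln\rho$-error from the exponent. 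We conclude
\[
\ln|\sC(G)| \ge \sum_v \ln q(v) - \tfrac{1}{2}\sum_v \ln\!\pth{\deg_G(v)/(d+1)} = n\ln\!\pth{q\big/\sqrt{D/(d+1)}},
\]
as claimed.

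The main technical difficulty is that the condition $\mad(G[N(v)]) \le d$ is not hereditary under induced subgraphs, so the induction underlying Theorem~\ref{thm:main} has to be set up uniformly over the family of induced subgraphs of $G$, using $G$'s global parameters $\Delta$, $d$, $\rho$, $\ell$ throughout, so that inequality~\eqref{eq:list_tail_bound} remains available for every neighbour $u \in N_{G_{i+1}}(v_i)$. Once this is arranged, the remainder is bookkeeping with the $W$-function and the $1/\ln\rho$-slack in the hypotheses.
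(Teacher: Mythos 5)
Your argument is correct and matches the paper's proof in essence: you refine the estimate of Theorem~\ref{thm:main} so that the exponent involves the backward degree rather than $\deg_G(v)$, take the telescoping product over a vertex ordering, and symmetrize before invoking the Lambert-$W$ bound. The single point of departure is the symmetrization: the paper fixes the ordering to be non-decreasing in $q(v)$, so each edge $uv$ contributes $\min\{1/q(u),1/q(v)\}$ to $\sum_i \deg_{H_i}(v_i)/q(v_i)$, and then bounds this by the arithmetic mean $\tfrac12(1/q(u)+1/q(v))$; you instead average the (ordering-dependent) lower bound over a uniformly random permutation, using $\mathbb{E}_\pi[d^\star_\pi(v)]=\deg_G(v)/2$ by linearity, which lands directly on $\sum_v \deg_G(v)/(2q(v))$. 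Both routes give the identical final bound (the paper's fixed ordering yields a marginally tighter intermediate inequality, but that slack is immediately discarded), so this is a cosmetic rearrangement rather than a different argument, and your observation that the neighbourhood-density condition is not hereditary --- so the induction must range over induced subgraphs of a fixed ambient $G$ --- is exactly how the paper sets things up. Minor indexing slip: $G_{i+1}$ should be $G[\{v_{i+1},\dots,v_n\}]$, not $G[\{v_i,\dots,v_n\}]$, for $G_{n+1}$ to be empty and the telescoping identity to close.
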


Shortly before an early version of this manuscript was published in a preprint repository,  independent work by Bernshteyn, Brazelton, Cao, and Kang appeared online \cite{BBRK21+}. 
They also rely on the counting argument used by Rosenfeld in \cite{Ros20}, 
and derive a lower-bound on $|\sC_k(G)|$ for every triangle-free graph $G$ of maximum degree $\Delta \le (1-\eps)k\ln k$ large enough in terms of $\eps$, and show that it is asymptotically sharp for $\Delta$-regular triangle-free graphs.
In particular, they prove that a random $\Delta$-regular triangle-free graph will almost surely admit no more than $(1-\frac{1}{k})^{\Delta n/2} \pth{\pth{1+\frac{2\ln n}{n}}k }^n$ $k$-colourings as $n \to \infty$.

\begin{thm}[Bernshteyn, Brazelton, Cao, Kang; 2021]
\label{thm:Anton}
For every $\eps>0$, there exists $\Delta_0$ such that the following holds. Let $G$ be an $n$-vertex triangle-free graph of maximum degree $\Delta\ge \Delta_0$ and with $m$ edges. Then, for every $k\ge (1+\eps) \Delta/\ln \Delta$, we have
$|\sC_k(G)| \ge (1-1/k)^m \big((1-\delta)k \big)^n$,
where $\delta=\frac{4}{k} \mathrm{e}^{\Delta/k}$.
\end{thm}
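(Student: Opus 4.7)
The plan is to proceed by induction on $n=|V(G)|$ using the Rosenfeld-style counting identity $|\sC_k(H)| = |\sC_k(H\setminus v)|\cdot \esp{\ell_\sigma(v)}$ for $\sigma$ uniformly distributed on $\sC_k(H\setminus v)$. Removing vertices in an arbitrary order $v_1,\ldots,v_n$ and noting that the back-degrees $d_i\coloneqq |N_G(v_i)\cap\{v_1,\ldots,v_{i-1}\}|$ sum to $m$, the theorem reduces to the per-vertex bound
\[
\esp{\ell_\sigma(v)}\ \ge\ k\pth{1-\tfrac1k}^{d_v}(1-\delta),
\]
valid for every triangle-free subgraph $H\subseteq G$ of maximum degree at most $\Delta$, every $v\in V(H)$, and $\sigma$ uniformly distributed on $\sC_k(H\setminus v)$, with $d_v\coloneqq\deg_H(v)$. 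Multiplying this bound over the $n$ removal steps yields $|\sC_k(G)|\ge (1-1/k)^m\big((1-\delta)k\big)^n$.

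The per-vertex bound is derived via colour symmetry and triangle-freeness. Permutation symmetry on $[k]$ gives $\pr{c\notin \sigma(N_H(v))} = \esp{\ell_\sigma(v)}/k$ for every $c\in[k]$, so it suffices to prove that $\pr{1\notin \sigma(N_H(v))} \ge (1-1/k)^{d_v}(1-\delta)$. Since $H$ is triangle-free, $N_H(v)=\{u_1,\dots,u_{d_v}\}$ is an independent set in $H\setminus v$; conditioning $\sigma$ on its restriction to $V(H)\setminus N_H[v]$, the colours $\sigma(u_j)$ become independent, each uniform on its list $L_j\coloneqq [k]\setminus \sigma(N_{H\setminus v}(u_j))$. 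Setting $Y_j\coloneqq\mathbf{1}[1\in L_j]/|L_j|$, the conditional probability that no neighbour is coloured $1$ equals $\prod_{j=1}^{d_v}(1-Y_j)$, and combining the elementary estimate $\ln(1-y)\ge -y-y^2$ (valid for $y\le 1/2$) with Jensen's inequality applied to $\mathrm{e}^{-x}$ yields
\[
\esp{\textstyle\prod_j(1-Y_j)}\ \ge\ \exp\pth{-\sum_j \esp{Y_j}-\sum_j\esp{Y_j^2}}.
\]
A colour-symmetric averaging gives $\esp{Y_j}=1/k$ and $\esp{Y_j^2}=\esp{1/|L_j|}/k$, so comparing with $(1-1/k)^{d_v}\ge \exp(-d_v/k-d_v/k^2)$ reduces the slack factor $(1-\delta)$ to an upper bound on $(1/k)\sum_j\esp{1/|L_j|}$.

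The main obstacle is upper bounding $\esp{1/|L_j|}$ sharply enough to recover the stated value $\delta=(4/k)\mathrm{e}^{\Delta/k}$: the naive bound $|L_j|\ge k-\Delta+1$ is useless when $\Delta$ approaches $k\ln k$. Here the machinery of the paper takes over. Theorem~\ref{thm:main_simp} supplies a lower bound $\esp{\ell_\sigma(u_j)}\ge \Delta^{\eps/2}$, and the tail bound \eqref{eq:list_tail_bound} can be iterated to produce high-probability concentration of $|L_j|$ around its heuristic value $k\mathrm{e}^{-\Delta/k}$. Plugging the resulting estimate $\esp{1/|L_j|}\le T/\ell + 1/T$ (optimised in $T$) into the display above yields slack $\delta\to 0$ as $\Delta\to \infty$; sharpening to the exact Bernshteyn--Brazelton--Cao--Kang constant is a careful but essentially technical refinement of this concentration argument.
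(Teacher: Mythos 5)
This statement is not proved in the paper at all; it is quoted as independent external work of Bernshteyn, Brazelton, Cao, and Kang \cite{BBRK21+}. So there is no in-paper proof to compare against, and your proposal must stand on its own merits.

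Your inductive scaffolding is sound: the telescoping identity $|\sC_k(G)| = \prod_i\esp{\ell_{\bsigma_i}(v_i)}$ over a vertex ordering, the fact that the back-degrees sum to $m$, and the colour-symmetry identity $\pr{c\notin\bsigma(N_H(v))}=\esp{\ell_\bsigma(v)}/k$ are all correct, as is the observation that triangle-freeness makes the $\bsigma(u_j)$ conditionally independent given the colouring outside $N_H[v]$. However, the final reduction fails quantitatively. After Jensen you reduce the per-vertex claim to (roughly) $\sum_j\esp{Y_j^2}=\frac{1}{k}\sum_j\esp{1/|L_j|}\lesssim\delta$. But even assuming optimal concentration of $|L_j|$ around its heuristic value $k\,\mathrm{e}^{-\Delta/k}$, one has $\esp{1/|L_j|}\approx\mathrm{e}^{\Delta/k}/k$, so with $d_v$ up to $\Delta$ the left-hand side is of order $(\Delta/k)\,\mathrm{e}^{\Delta/k}/k$. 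Since $\Delta/k\ge\ln\Delta/(1+\eps)$ in the regime of the theorem, this exceeds $\delta=\frac{4}{k}\mathrm{e}^{\Delta/k}$ by a factor $\Theta(\ln\Delta)$. No amount of tightening the list-size concentration recovers the stated constant: the loss comes from the $y^2$ term in $\ln(1-y)\ge -y-y^2$ combined with Jensen, not from the tail bound for $|L_j|$. Re-accounting across vertices (using $\sum_i d_i=m$ rather than $d_i\le\Delta$) only reduces the overshoot by a constant factor, not the $\ln\Delta$ factor. So your approach would at best yield a weaker version of the theorem with $\delta'=\Theta\bigl((\Delta/k)\,\mathrm{e}^{\Delta/k}/k\bigr)$; characterising the remaining gap as ``a careful but essentially technical refinement'' is not accurate. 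A genuinely different treatment of the dependence among the $Y_j$ (e.g. exploiting their sign structure/covariances rather than blunt second-moment Jensen) is required to reach $\delta=\frac4k\mathrm{e}^{\Delta/k}$.

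Two smaller issues. First, the inequality $\ln(1-y)\ge -y-y^2$ only holds on a bounded range; the event $Y_j=1$ (that is $L_j=\{1\}$) makes the product vanish and must be handled separately, e.g. by first showing it has negligible probability via \eqref{eq:list_tail_bound}. Second, the displayed comparison ``$(1-1/k)^{d_v}\ge\exp(-d_v/k-d_v/k^2)$'' goes the wrong way for the conclusion you want: the useful direction is $(1-1/k)^{d_v}\le\exp(-d_v/k)$, after which the slack reduces to $\exp\bigl(-\sum_j\esp{Y_j^2}\bigr)\ge 1-\delta$. Finally, note that Theorem~\ref{thm:main_simp} is stated under $(1-\eps)k\ln k\ge\Delta$, whereas the theorem you are proving assumes $k\ge(1+\eps)\Delta/\ln\Delta$; these are asymptotically comparable but not identical, and the translation should be made explicit before invoking the list-size lower bound.
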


Let us pause to consider how many colourings we should expect. 
Suppose we choose the colour of each vertex independently  and uniformly at random. 
A moment's reflection shows that if there are no cycles in our graph then the events that two distinct edges are monochromatic are independent. 
Thus if the graph we are colouring has $n$ vertices, $m$ edges and no cycles, then the number of proper $k$-colourings is exactly $k^n(1-\frac1k)^m$.
Of course, no such graph exists in general, but we can still compute the number of colourings. 
If we consider the quantity $f(G,k) \coloneqq \frac 1n \ln |\sC_k(G)|$, known as the \emph{free energy per variable}, then we see that for a
 $\Delta$-regular "graph" $G$ with no cycles  we have $ \frac mn =\frac{\Delta}{2}$ and $f(G,k) = \ln(k (1-\frac1k)^{\Delta/2})$.
 Such a graph exists only if we allow an infinite number of vertices, and is unique; it is the infinite $\Delta$-regular tree $\mathbb{T}_\Delta$ (if you enjoy this perspective please see \cite{csikvari2016extremal}).
Of course, the number of colourings is also infinite, but we may extend the notion of free energy per variable to infinite graphs, and for $\mathbb{T}_\Delta$ this yields $f(\mathbb{T}_\Delta,k) \coloneqq \ln(k (1-\frac1k)^{\Delta/2}) \approx \ln k - \frac{\Delta}{2k}$.

Let us shift now to considering $ h(G,k) \coloneqq \frac{f(G,k)}{f(\mathbb{T}_\Delta,k)}$ for $\Delta$-regular graphs $G$. 
This measures the free energy per variable of $G$ relative to $\mathbb{T}_\Delta$. If $G$ has many colourings then $h(G,k)$ is larger than $1$; if it has few then $h(G,k)$ is smaller than $1$.  
Theorems~\ref{thm:count} and~\ref{thm:Anton} yield the bound\footnote{This is conjectured to hold with half the number of colours in \cite{Ber19}.} $ h(G,k) \ge (1-o_\Delta(1))$.
Further for the random regular graph  $G_{n,\Delta}$, \cite{Ber19} shows that almost surely
$h(G_{n,\Delta},k) \le (1+o_n(1))$.
We conjecture that both of these bounds are asymptotically tight for sequences of graphs with growing girth.

\begin{conj}
    For each $\Delta\in \mathbb{N}$ fix a  sequences $\{G^\Delta_i\}_{i=1}^\infty$ of $\Delta$-regular graphs with girth tending to $\infty$.
Then for all functions $k\colon \mathbb{N}\to \mathbb{N}$ satisfying $k(\Delta) > (1+\eps)\frac{\Delta}{\ln \Delta}$ for some $\eps>0$ it holds that
\[
\sup_{\Delta \in \mathbb{N}}\limsup_{i\rightarrow \infty} h(G^\Delta_i,k(\Delta)) \le \lim_{\Delta\rightarrow \infty}\liminf_{i\rightarrow \infty} h(G^\Delta_i,k(\Delta))=  1.
\]
\end{conj}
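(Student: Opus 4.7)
The plan is to attack the two-sided bound in the conjecture separately, since the lower inequality is in principle within reach of the paper's techniques, while the matching upper inequality is the real obstacle.

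For the lower inequality $\lim_{\Delta\to\infty}\liminf_i h(G^\Delta_i,k(\Delta))\ge 1$, I would apply Theorem~\ref{thm:count} directly along the sequence. Since the girth of $G^\Delta_i$ diverges, each graph is eventually triangle-free and more, so one may set $d=0$ in Theorem~\ref{thm:count}; as the graphs are $\Delta$-regular, $D=\Delta$ as well. With $|L(v)|=k(\Delta)\ge(1+\varepsilon)\Delta/\ln\Delta$ and $q(v)\approx \Delta/W(\Delta/(\ln\Delta)^3)$, the lower bound becomes $(q/\sqrt{\Delta})^n$. Using $W(z)=\ln z-\ln\ln z+o(1)$ and elementary manipulation, this matches $\bigl(k(1-1/k)^{\Delta/2}\bigr)^n=\mathrm{e}^{nf(\mathbb{T}_\Delta,k)}$ to leading order in $\Delta$, yielding $h\ge 1-o_\Delta(1)$.

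The upper inequality $\sup_\Delta\limsup_i h(G^\Delta_i,k(\Delta))\le 1$ is where the difficulty lies. Because the girth diverges, $\{G^\Delta_i\}_i$ Benjamini--Schramm converges to the infinite $\Delta$-regular tree $\mathbb{T}_\Delta$, and one would like a \emph{continuity} principle for the free energy under such convergence. The natural route is to establish \textbf{strong spatial mixing} for the uniformly random proper $k$-colouring in the regime $k(\Delta)\ge(1+\varepsilon)\Delta/\ln\Delta$: if the influence of a boundary at distance $r$ decays uniformly as $\mathrm{e}^{-\Omega(r)}$, then a standard telescoping of the free energy vertex-by-vertex as a sum of conditional entropies, compared to the tree contribution, would deliver the desired upper bound. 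The present paper already provides promising local input: Theorem~\ref{thm:recol} together with the Bernoulli-domination framework of Section~\ref{sec:technical_results} encode a form of local recolourability with the exponentially good probability $1-\mathrm{e}^{-\Delta^{\varepsilon/3}}$, which is precisely the kind of per-vertex tail bound a mixing argument wants to consume.

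The main obstacle, and the reason this remains a conjecture, is converting this local recolourability into a quantitative strong spatial mixing statement that is uniform in boundary conditions and in $\Delta$. Zero-temperature antiferromagnetic Potts measures are notoriously non-monotone, which precludes a direct appeal to the monotone toolbox (censoring, canonical paths via monotone couplings), and the available disagreement-percolation-type methods appear too lossy in the critical regime $k\sim \Delta/\ln\Delta$. An alternative that bypasses strong spatial mixing is to try to match the planted-model analysis of Bernshteyn--Brazelton--Cao--Kang (Theorem~\ref{thm:Anton}) along deterministic sequences via an interpolation or exchange argument between $G^\Delta_i$ and a truly random $\Delta$-regular graph; but the existence of ``exotic'' high-girth graphs (e.g.\ Ramanujan graphs with atypical spectra) means no such transfer can be purely first-moment, and the first candidate for a proof remains the spatial mixing route above.
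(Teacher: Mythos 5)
This statement is an open conjecture in the paper; there is no proof to compare against, only a sketched path to the upper bound that follows the conjecture. With that understood, your proposal diverges from the paper in two substantive places.

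For the lower bound, invoking Theorem~\ref{thm:count} alone does not deliver $h\ge 1-o_\Delta(1)$ for fixed $\eps>0$. With $d=0$ and $\Delta$-regular $G$ the conclusion of Theorem~\ref{thm:count} reads $|\sC(G)|\ge(q/\sqrt{\Delta})^n$ with $q\lesssim k$. The tree reference $\bigl(k(1-1/k)^{\Delta/2}\bigr)^n$ loses only $\Delta^{1/(2(1+\eps))+o(1)}$ per vertex when $k=(1+\eps)\Delta/\ln\Delta$, whereas Theorem~\ref{thm:count} always pays $\Delta^{1/2}$, overpaying by $\Delta^{\eps/(2(1+\eps))}$. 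That only gives $h\ge\frac{1+\eps}{1+2\eps}-o_\Delta(1)$, which is strictly below $1$ for $\eps>0$. The lower bound in the conjecture is actually established via Theorem~\ref{thm:Anton}, whose bound is written in the form $(1-1/k)^m\bigl((1-\delta)k\bigr)^n$ with $\delta\to0$ and therefore matches the tree free energy up to a $o_\Delta(1)$ multiplicative loss in $h$. The paper cites both theorems, but in the triangle-free high-girth setting it is Theorem~\ref{thm:Anton} that closes this gap, not Theorem~\ref{thm:count} as stated.

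For the upper bound, your strong-spatial-mixing / Benjamini--Schramm-continuity route is a genuinely different strategy from the one the paper sketches. The paper's proposed path is combinatorial and extremal: (i) extend the Csikv\'ari--Lin convergence result for independent sets on bipartite sequences of growing girth to the free energy per variable $f(G,k)$, with limit $f(\mathbb{T}_\Delta,k)$; and (ii) show, in the spirit of Sah--Sawhney--Stoner--Zhao's inequality $f(G,k)\le f(K_{\Delta,\Delta},k)$, that among $\Delta$-regular graphs of girth at least $g$ the supremum of $f(G,k)$ is achieved along bipartite sequences. Combined, these would give the upper bound without any correlation-decay estimate. Your spatial-mixing route would, if carried out, yield a stronger uniform boundary-influence statement, but the obstacles you correctly flag --- the non-monotonicity of the zero-temperature antiferromagnetic Potts measure and the lossiness of disagreement-percolation bounds near $k\sim\Delta/\ln\Delta$ --- are precisely why the paper charts the extremal path instead.
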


We have established the lower bound and we now chart a path toward the upper bound. 
First note that, writing $i(G)$ for the number of independent sets in $G$, it was shown in \cite{csikvari2016extremal} that if each graph in the aforementioned sequence  $\{G_i\}_{i=1}^\infty$ is bipartite, then $\lim_{j\rightarrow \infty} i(G_j)$ exists (it corresponds in a meaningful way to the infinite regular tree). 
We conjecture that the same result should hold when we replace $i(G)$ by $f(G,k)$ for $k \ln k \ge (1+o_\Delta(1))\Delta$ and that the limit should be $f(\mathbb{T}_\Delta,k)$ (again corresponding to the infinite regular tree)\footnote{Indeed it was already shown in \cite{csikvari2016sidorenko} that $f(\mathbb{T}_\Delta,k)$ is a lower bound for bipartite graphs, and thus our lower bounds can be viewed as an approximate cousin of \cite[Theorem~8]{sah2020reverse}.}.
Secondly, in \cite{sah2020reverse} it was shown that for all $k$ and $\Delta$-regular graphs $G$ on $n$ vertices we have
$f(G,k) \le f(K_{\Delta,\Delta},k)$, where $K_{\Delta,\Delta}$ is the complete $\Delta$-regular bipartite graph.
That is, it was shown that among $\Delta$-regular graphs, complete bipartite graphs maximise the free energy per variable (for $k$-colourings).
We conjecture that for all $g$, among $\Delta$-regular graphs $G$ with girth at least $g$, the \emph{supremum} of $f(G,k)$ is always achieved by a sequence of bipartite graphs\footnote{Of course the most elegant solution would be if $G_i$ was just a smallest (in terms of vertices) $\Delta$-regular bipartite graph with girth at least $g$ for all $i$, but this is stronger than what we need, nevermind that finding such graphs is an extremely challenging open problem \cite{furedi2006turan}}.
In fact one only needs to show that the supremum over bipartite graphs of girth $\omega(g)$ is at least that of general graphs of girth $g$ for some $\omega(g)$ tending to infinity. 
If both of these conjectures are true, then the limit for bipartite graphs would be an upper bound for all sequences of graphs with girth tending to $\infty$. 

We note that this would imply that  for all $\eps > 0$ there exists $\Delta_0$ such that for all $\Delta>\Delta_0$ we have an efficient approximation scheme for $f(G,k)$ for $\Delta$-regular graphs of large girth within a factor of $(1+\eps)$.
On the other hand in \cite{galanis2016inapproximability} it was shown that for fixed $\Delta$ there exists no Fully Polynomial-Time Randomised Approximation Scheme for approximating $f(G,k)$ within an additive error of $\eps$ unless RP = NP. 
It would be very interesting to pin down exactly where algorithmic complexity appears. 
We briefly remark that work for graphs of high girth and infinite trees was the launchpad for efficient approximate counting of graph $k$-colourings when $k$ is some constant factor larger than the maximum degree $\Delta$ \cite{bandyopadhyay2008counting,gamarnik2012correlation,weitz2006counting}.
Perhaps, under appropriate girth conditions, related approximation schemes are possible for $k< \Delta$. 

\section{Technical Results and Selected Proofs}\label{sec:technical_results}
In this section we collect our core results about uniformly random colourings and finish by proving Theorem~\ref{thm:recol} and sketching the proof of Theorem~\ref{thm:girth-thawed}.
\subsection{Coupon-Collector-type results}
\label{sec:coupon-collector}

We begin with a proof of the Coupon-Collector Lemma, of which we recall the statement hereafter.

\begin{lemma}[Coupon-Collector Lemma]
%\label{lem:coupon-collector}
Suppose we have random non-empty lists $\bL_1,\dots,\bL_d$, each of which takes values in the finite subsets of $\mathbb{N}$. 
Fix some integer $t\ge 1$, and define the random variable $\bX \coloneqq \#\{i \in [\Delta] : |\bL_i| \le t\}$.
Now choose an element $\bsigma(i)$ of $\bL_i$ uniformly at random for each $i \in [d]$ and define the random variable $\bL\coloneqq [k] \setminus \{ \bsigma(i) : i \in [d]\}$. 
Then
\[
\esp{|\bL|} \ge k_0\,\mathrm{e}^{-\pth{1+\frac{1}{t}}\frac{d}{k_0}}, \quad \mbox{where } k_0 = k - \esp{\bX}.
\]
\end{lemma}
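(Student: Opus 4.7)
The plan is to compute the conditional expectation of $|\bL|$ given the lists and then take expectations, applying Jensen's inequality twice: first to pass from a lower bound on $\sum_c \ln\pr{c \in \bL \mid \cdots}$ to one on $\sum_c \pr{c \in \bL \mid \cdots}$, and second to pass from the random quantity $k-\bX$ to its mean $k_0$.

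First, I would condition on the realisation of the lists $\bL_1,\ldots,\bL_d$ together with the restriction of $\bsigma$ to the set of short-list indices $S \coloneqq \{i : |\bL_i| \le t\}$. Let $U \coloneqq [k] \setminus \{\bsigma(i) : i \in S\}$; only colours in $U$ may belong to $\bL$, and $|U| \ge k - \bX$ since $\bsigma(S)$ has at most $|S| = \bX$ distinct values. By mutual independence of the $\bsigma(i)$ given the lists,
\[
\pr{c \in \bL \mid \text{lists}, \bsigma(S)} = \prod_{\substack{i \notin S \\ c \in \bL_i}} \pth{1 - \tfrac{1}{|\bL_i|}} \qquad \text{for every } c \in U.
\]

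The key analytic input is the elementary inequality $(1 - 1/m) \ge \exp(-(1+1/t)/m)$ valid for every integer $m \ge t+1$, which I would verify via $-\ln(1-x) \le x/(1-x)$. Applying it to every long list and summing $\ln \pr{c \in \bL \mid \cdots}$ over $c \in U$, the double sum collapses to $-(1+1/t)\sum_{i \notin S} |U \cap \bL_i|/|\bL_i| \ge -(1+1/t)(d - \bX)$. By the AM--GM inequality (equivalently, Jensen's inequality for the concave $\ln$) applied to the numbers $\pr{c \in \bL \mid \cdots}$,
\[
\esp{|\bL| \mid \text{lists}, \bsigma(S)} \ge |U| \exp\pth{-\tfrac{(1+1/t)(d-\bX)}{|U|}} \ge (k-\bX)\exp\pth{-\tfrac{(1+1/t)\,d}{k-\bX}},
\]
where the last inequality uses the monotonicity of $x \mapsto x\,\mathrm{e}^{-c/x}$ on $(0,\infty)$.

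Finally, I would take expectation over the lists. Because $f(x) \coloneqq x\,\mathrm{e}^{-c/x}$ is convex on $(0,\infty)$ (its second derivative equals $c^2 \mathrm{e}^{-c/x}/x^3 \ge 0$), Jensen's inequality gives $\esp{(k-\bX)\mathrm{e}^{-(1+1/t)d/(k-\bX)}} \ge k_0\,\mathrm{e}^{-(1+1/t)d/k_0}$, yielding the claim. The main wrinkle, I suspect, is the choice of $U$: restricting to colours avoided by the short-list assignments $\bsigma(i)$, rather than to colours missing from short lists entirely, is exactly what produces the sharp denominator $k_0 = k - \esp{\bX}$ in place of the weaker $k - t\,\esp{\bX}$ one would otherwise get.
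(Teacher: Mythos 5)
Your proposal is correct and follows essentially the same route as the paper's proof: condition on the lists and on the choices at short-list indices, compute the conditional expectation via independence and the elementary bound $1-1/m \ge \exp(-(1+1/t)/m)$ for $m\ge t+1$, then apply Jensen twice (once via AM--GM over colours, once via convexity of $x\mapsto x\mathrm{e}^{-c/x}$ over realisations). The only cosmetic differences are that you keep the slightly sharper $d-\bX$ before relaxing to $d$, and you verify the key inequality via $-\ln(1-x)\le x/(1-x)$ rather than the paper's equivalent $1-1/z > \mathrm{e}^{-1/(z-1)}$.
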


\begin{proof}
Let us fix a realisation $L_1, \ldots L_d$ of the random lists. 
%This fixes the realisation $x$ of $\bX$. 
We let $S = \{i : |L_i| \le t\}$ be the set of short lists, and $B = [d]\setminus S$ be the set of big lists.
We observe that since $|L_i| \ge t+1$ for every $i\in B$, we have $\frac{1}{|L_i|-1} \le (1+\frac{1}{t})\frac{1}{|L_i|}$. Combining this observation with one due to Molloy \cite{Mol19}, we obtain
    \begin{equation}
        \label{eq:doublesum}
        \sum_{x\in [k]} \sum_{\substack{i\in B \\ x\in L_i}} \frac{1}{|L_i|-1} \le \pth{1+\frac{1}{t}}\sum_{x\in [k]} \sum_{\substack{i\in B \\ x\in L_i}} \frac{1}{|L_i|} \le \pth{1+\frac{1}{t}} \sum_{i\in B} \sum_{x\in L_i} \frac{1}{|L_i|} \le 
        \pth{1+\frac{1}{t}}d.
    \end{equation}
The observation of Molloy is that each list $L_i$ appears in the first double sum exactly $|L_i|$ times, and it does so with weight $1/|L_i|$, meaning its contribution is always $1$. 

Let us fix $\bsigma(i) = x_i \in L_i$ for every $i\in S$, and let $L_0 \coloneqq [k] \setminus \{x_i : i\in S\}$. Note that $|L_0|\ge k-|S|$. 
We may now pick $\bsigma(i)$ uniformly at random from $L_i$ for every $i\in B$, and let $\bL_0 \coloneqq L_0 \setminus \{ \bsigma(i) : i\in B\}$. Note that $\bL_0$ is precisely $\bL$ under the condition that $\bsigma(S) = \{x_i : i\in S\}$. We have

    \begin{align*}
        \esp{|\bL_0|} &= \sum_{x\in L_0} \pr{x\notin \bsigma(B)} = \sum_{x\in L_0} \prod_{\substack{i\in B \\ x\in L_i}} \pth{1-\frac{1}{|L_i|}} \\
        &\ge \sum_{x\in L_0} \exp \pth{-\sum_{\substack{i\in B \\ x\in L_i}} \frac{1}{|L_i|-1}} &
        \mbox{since $1-\frac{1}{z} > \mathrm{e}^{-\frac{1}{z-1}}$ for every $z>1$;}\\
        &\ge |L_0| \exp\pth{-\frac{1}{|L_0|}\sum_{x\in L_0}\sum_{\substack{i\in B \\ x\in L_i}} \frac{1}{|L_i|-1}} &
        \mbox{by convexity of $\exp$;}\\
        &\ge (k-|S|) \exp\pth{- \frac{\pth{1+\frac{1}{t}}d}{k-|S|}} &
        \mbox{by $|L_0| > k -|S|$ and \eqref{eq:doublesum}.}
    \end{align*}

    It remains to average over all possible realisations of $\bL_1, \ldots, \bL_d$. We use Jensen's inequality together with the convexity of the function $z\mapsto z\mathrm{e}^{-C/z}$ for every $C>0$ over the interval $(0,+\infty)$, and obtain that 
    \begin{align*}
        \esp{|\bL|} \ge \esp{(k-\bX) \exp\pth{- \frac{\pth{1+\frac{1}{t}}d}{k-\bX}}} \ge \esp{(k-\bX)}\exp\pth{- \frac{\pth{1+\frac{1}{t}}d}{\esp{k-\bX}}} = k_0 \mathrm{e}^{-\pth{1+\frac{1}{t}}\frac{d}{k_0}},
    \end{align*}
    where $k_0 = k - \esp{\bX}$.

\end{proof}

A straightforward application of this result yields the following (deterministic) existence of a colouring that induces a big list at $v$, via the first moment method.
This can be obtained by following the exact same computation as that in the proof of Theorem~\ref{thm:main_simp}.

\begin{corollary}\label{cor:find_good_col}
Let $\eps \in (0,1)$ be fixed and let $\Delta$ be sufficiently large (in terms of $\eps$). Fix $k$ such that $(1-\eps)k\ln k \ge \Delta$. Let $L_1, \ldots ,L_\Delta \subset [k]$ be given, and suppose that the number of short lists is $\#\{i : |L_i| \le 5/\eps\} \le \eps k/5$.
Then there exists $\sigma(i) \in L_i$ for $i \in [\Delta]$ such that $|[k] \setminus \{\sigma(i) : i\in [\Delta]\}| \ge \Delta^{\eps/2}$.
\end{corollary}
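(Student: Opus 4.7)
The plan is to apply the Coupon-Collector Lemma to the deterministic lists $L_1,\dots,L_\Delta$, viewed as (constant) random lists, and then extract a deterministic witness via the first moment method. This is essentially the "deterministic projection" of the computation already carried out at the end of the proof of Theorem~\ref{thm:main_simp}: the role of the inductive bound on the number of short lists in $N(v)$ is now played directly by the hypothesis of the corollary.

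Concretely, I would set $t \coloneqq \lfloor 5/\eps \rfloor$ so that $\{i : |L_i| \le t\} \subseteq \{i : |L_i| \le 5/\eps\}$, which by hypothesis gives $\bX = \#\{i : |L_i| \le t\} \le \eps k/5$. Consequently $k_0 = k - \esp{\bX} \ge (1-\eps/5)k$. Applying Lemma~\ref{lem:coupon-collector} with $d = \Delta$ to a uniformly random choice $\bsigma(i) \in L_i$ yields $\esp{|\bL|} \ge k_0 \mathrm{e}^{-(1+1/t)\Delta/k_0}$.

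Next I would plug in the bounds $k_0 \ge (1-\eps/5)k$, $1+1/t \le 1+\eps/4$ (valid for $\eps$ small enough, and close enough to the $1+\eps/5$ used earlier that the argument goes through with room to spare), and the assumption $\Delta \le (1-\eps)k\ln k$. Following the exact chain of inequalities at the end of the proof of Theorem~\ref{thm:main_simp} — i.e.\ simplifying the exponent to $-(1 - \Omega(\eps))\ln k$ and using $k \ge \Delta/\ln \Delta$ — gives $\esp{|\bL|} \ge \Delta^{\eps/2}$ for $\Delta$ sufficiently large in terms of $\eps$.

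Finally, by the first moment method, some realisation $(\sigma(1),\dots,\sigma(\Delta))$ of the $\bsigma(i)$ must achieve $|[k] \setminus \{\sigma(i) : i \in [\Delta]\}| \ge \Delta^{\eps/2}$, which is the claimed conclusion. I do not foresee any real obstacle: the only mild nuisance is the integrality of $t$ (one must choose $t = \lfloor 5/\eps \rfloor$ rather than $\lceil 5/\eps \rceil$ so that the short-list hypothesis can be invoked as stated), but this introduces at most a negligible slack in the exponent that is absorbed by taking $\Delta$ large.
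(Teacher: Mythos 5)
Your proof is correct and follows exactly the route the paper indicates: the paper explicitly dispatches this corollary by saying it "can be obtained by following the exact same computation as that in the proof of Theorem~\ref{thm:main_simp}," which is precisely what you do, applying the Coupon-Collector Lemma to the deterministic lists and then extracting a witness via the first moment method. The observation about taking $t = \lfloor 5/\eps \rfloor$ (so that the short-list hypothesis can be invoked verbatim) is a sensible detail the paper leaves implicit, and the slight worsening of the factor $1+1/t$ from $1+\eps/5$ to $1+\eps/4$ is indeed absorbed by the slack in the final chain of inequalities for $\Delta$ sufficiently large.
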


\subsection{Bernoulli-domination}
\label{sec:bernoulli-domination}

For the rest of the proofs, we will need high-concentration bounds, and to that end we introduce the concept of \emph{Bernoulli-domination}. Let us first state the following useful Chernoff-Hoeffding type tail bound for binary random variables that are dominated by independent random variables. 
This is a special case of \cite[Theorem~3.4]{panconesi1997randomized}.
\begin{thm}
\label{thm:concentration}
Suppose $\bX_1,\dots,\bX_s$ are binary random variables and $\bY_1,\dots,\bY_s$ are independent binary random variables. Let $\bY \coloneqq \sum_{i\in [s]}\bY_i$ and $\bX\coloneqq \sum_{i\in [s]} \bX_i$. Write $ \mu \coloneqq\esp{\bY}$. Then if 
\[
\esp{\prod_{i\in J}\bX_i} \le \esp{\prod_{i\in J}\bY_i} ,
\]
for all $J \subset [s]$, it follows that for all $\delta>0$,
\[
\pr{\bX \ge (1+\delta)\mu} \le \left[\frac{\mathrm{e}^{\delta}}{(1+\delta)^{1+\delta}}\right]^\mu.
\]
In particular, for every $\sigma \ge 6\mu$, we have
\[ \pr{\bX \ge \sigma} \le \mathrm{e}^{-\sigma}.\]
\end{thm}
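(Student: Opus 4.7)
The statement is a Chernoff-style tail bound, so the natural strategy is the standard exponential moment (MGF) method. The only non-standard feature is that the $\bX_i$'s are not independent; they are only \emph{jointly} controlled by the $\bY_i$'s through their joint moments. The plan is to exploit the binary assumption to reduce comparison of MGFs to comparison of joint moments. Concretely, since $\bX_i\in\{0,1\}$, we have $\mathrm{e}^{t\bX_i}=1+(\mathrm{e}^t-1)\bX_i$, and therefore
\[
\mathrm{e}^{t\bX}=\prod_{i\in[s]}\bigl(1+(\mathrm{e}^t-1)\bX_i\bigr)=\sum_{J\subseteq[s]}(\mathrm{e}^t-1)^{|J|}\prod_{i\in J}\bX_i.
\]
Taking expectations and using that $(\mathrm{e}^t-1)^{|J|}\ge 0$ for every $t\ge 0$, the hypothesis $\esp{\prod_{i\in J}\bX_i}\le\esp{\prod_{i\in J}\bY_i}$ immediately yields $\esp{\mathrm{e}^{t\bX}}\le\esp{\mathrm{e}^{t\bY}}$ term-by-term.

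Next I would invoke independence of the $\bY_i$'s together with the inequality $1+x\le\mathrm{e}^x$ to bound $\esp{\mathrm{e}^{t\bY}}=\prod_i(1+p_i(\mathrm{e}^t-1))\le\exp\bigl(\mu(\mathrm{e}^t-1)\bigr)$, where $p_i\coloneqq\esp{\bY_i}$. Applying Markov's inequality gives
\[
\pr{\bX\ge(1+\delta)\mu}\le\mathrm{e}^{-t(1+\delta)\mu}\,\esp{\mathrm{e}^{t\bX}}\le\exp\bigl(\mu(\mathrm{e}^t-1)-t(1+\delta)\mu\bigr),
\]
and optimising by taking $t=\ln(1+\delta)$ recovers the first bound $\bigl(\mathrm{e}^\delta/(1+\delta)^{1+\delta}\bigr)^\mu$.

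The second inequality is then a routine one-variable estimate. Setting $\sigma=(1+\delta)\mu$, one needs to show that for $\sigma/\mu$ a sufficiently large constant (the paper uses the threshold $\sigma\ge 6\mu$), the exponent $\mu\delta-(1+\delta)\mu\ln(1+\delta)$ is dominated by $-\sigma$, which reduces to the one-variable inequality $\ln(\sigma/\mu)\ge 2-\mu/\sigma$; this is easily verified once $\sigma/\mu$ is larger than an absolute constant. The conceptually delicate point of the whole argument, and the only thing that really uses the hypothesis, is the first paragraph: observing that when the summands are binary, $\esp{\mathrm{e}^{t\bX}}$ is a \emph{nonnegative} linear combination of joint moments, so that a one-sided moment comparison is exactly what is needed to transfer the MGF bound. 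The remaining optimisation and Markov step are exactly as in the classical independent case.
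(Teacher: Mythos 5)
Your argument is correct and is exactly the standard exponential-moment (Chernoff) method that the cited reference, Panconesi and Srinivasan, uses to prove the general version of this bound; the paper itself supplies no proof and simply cites that work. The key observation — that for binary $\bX_i$ the MGF $\esp{\mathrm{e}^{t\bX}}$ is a nonnegative linear combination of the joint moments $\esp{\prod_{i\in J}\bX_i}$ (for $t\ge0$), so the one-sided moment-domination hypothesis transfers directly to a domination of MGFs — is indeed the only place the hypothesis is used, and the rest is the usual Markov-plus-optimisation computation.

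One small point worth flagging, though it concerns the paper's statement rather than your reduction: the final one-variable inequality $\ln(\sigma/\mu)\ge 2-\mu/\sigma$ that you correctly identify as the content of the "in particular" clause is in fact \emph{false} at $\sigma/\mu=6$ (one has $\ln 6\approx 1.792 < 2-\tfrac16\approx 1.833$); the inequality holds only for $\sigma/\mu\gtrsim 6.31$. Equivalently, with $\mu=1$ and $\sigma=6$ the Chernoff expression is $\mathrm{e}^5/6^6\approx 1.28\,\mathrm{e}^{-6}>\mathrm{e}^{-6}$. So the constant $6$ in the theorem is a slight overclaim (replacing it by, say, $7$ would be safe, and would not affect any application in the paper since there is ample slack in every place this tail bound is invoked).
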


Motivated by the above theorem in the case where each $\bY_i$ follows the law of Bernoulli $\ber(p)$, we introduce the following definition.
\begin{definition}
We say binary random variables $(\bX_1,\dots,\bX_s)$ are \emph{$\ber(p)$-dominated} if 
\[
\esp{\prod_{i\in J}\bX_i} \le p^{|J|},
\]
for all $J \subseteq [s]$. 
\end{definition}

One can understand Theorem~$\ref{thm:concentration}$ as saying that the upper tail of the random variable is almost as small as that of a binomial random variable.
In fact, one can bootstrap Theorem~\ref{thm:main_simp} to make this analogy even stronger, by showing that large upward deviations on disjoint sets are also $\ber(p)$-dominated for some $p$.
The random variable $\bR_i$ in the next theorem is the indicator function of a large upward deviation on some set $Q_i\subset [s]$.

\begin{corollary}\label{cor:renorm_ber}
Suppose $(\bX_1,\dots,\bX_s)$ are $\ber(p)$-dominated. 
Let disjoint subsets $Q_1,\dots,Q_m \subseteq [s]$ of equal order be given, fix $\delta > 0$ and define the binary random variables 
\[
\bR_i \coloneqq \left\{\sum_{j\in Q_i}\bX_j > (1+\delta)p |Q_i| \right\},
\]
for $i \in [m]$. Let $q$ be the upper bound on $\pr{\bR_i = 1}$ given by Theorem~\ref{thm:concentration}. 
Then $(\bR_1,\dots,\bR_m)$ are $\ber(q)$-dominated.

\end{corollary}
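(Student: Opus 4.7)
The plan is to reduce the statement to a single application of Theorem~\ref{thm:concentration} applied to the union of the selected blocks. The key observation exploits the fact that all the $Q_i$ have a common cardinality: a joint upward deviation on several blocks translates into an upward deviation \emph{with the same relative parameter $\delta$} on their union.

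Fix $J\subseteq [m]$ and set $Q \coloneqq \bigcup_{i\in J} Q_i$, a disjoint union of cardinality $|J|\cdot|Q_1|$. On the event $\bigcap_{i\in J}\{\bR_i = 1\}$, summing the inequalities that define each $\bR_i$ yields
\[
\sum_{j\in Q}\bX_j \;=\; \sum_{i\in J}\sum_{j\in Q_i}\bX_j \;>\; (1+\delta)p\sum_{i\in J}|Q_i| \;=\; (1+\delta)p|Q|,
\]
so $\prod_{i\in J}\bR_i \le \mathbf{1}\bigl[\sum_{j\in Q}\bX_j > (1+\delta)p|Q|\bigr]$, and consequently $\esp{\prod_{i\in J}\bR_i}$ is bounded by the probability of that single-sum deviation.

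Next, I would observe that the subfamily $(\bX_j)_{j\in Q}$ inherits $\ber(p)$-domination from $(\bX_1,\dots,\bX_s)$: for every $J'\subseteq Q$, one has $J'\subseteq [s]$, and thus $\esp{\prod_{j\in J'}\bX_j}\le p^{|J'|}$ directly from the hypothesis. Theorem~\ref{thm:concentration} therefore applies to the restricted family and gives
\[
\pr{\sum_{j\in Q}\bX_j > (1+\delta)p|Q|} \;\le\; \left[\frac{\mathrm{e}^\delta}{(1+\delta)^{1+\delta}}\right]^{p|Q|}.
\]

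Finally, the bound $q$ provided by Theorem~\ref{thm:concentration} for a single block is $q = \bigl[\mathrm{e}^\delta/(1+\delta)^{1+\delta}\bigr]^{p|Q_1|}$. Since $|Q| = |J|\cdot|Q_1|$, the right-hand side above equals exactly $q^{|J|}$, which establishes $\ber(q)$-domination of $(\bR_1,\dots,\bR_m)$. There is no real obstacle in this argument: it relies only on (i) the common cardinality of the $Q_i$, which is what makes the threshold for a joint deviation on $J$ agree with a single deviation on the union, and (ii) the closure of Bernoulli-domination under restriction to subfamilies.
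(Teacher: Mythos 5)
Your proposal is correct and follows essentially the same route as the paper: both fix $J$, pass to the union $Q=\bigcup_{i\in J}Q_i$, note that the joint event forces the corresponding upward deviation on $Q$, apply Theorem~\ref{thm:concentration} to the (inherited) $\ber(p)$-dominated subfamily, and use the equal cardinality of the $Q_i$ to factor the resulting bound as $q^{|J|}$. The only cosmetic difference is that you spell out the restriction-to-subfamily step explicitly, whereas the paper leaves it implicit under "by assumption."
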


\begin{proof}
Consider $J\subseteq [m]$; we wish to show that 
\[
\esp{\prod_{i\in J}\bR_i} \le q^{|J|}. 
\]
To this end, let $I\coloneqq \cup_{j \in J}Q_j \subset [s]$ and observe that, by assumption and Theorem~\ref{thm:concentration}, we have
\[
\pr{\sum_{i \in I}\bX_i > (1+\delta)p|I|} \le \left[\frac{\mathrm{e}^{\delta}}{(1+\delta)^{1+\delta}}\right]^{p|I|} = \prod_{j \in J} \left[\frac{\mathrm{e}^{\delta}}{(1+\delta)^{1+\delta}}\right]^{p|Q_j|} = q^{|J|}.
\] 
To conclude we observe that if $\bR_j =1 $ for all $j \in J$ then it follows that $\sum_{i \in I}\bX_i > (1+\delta)p|I|$.
\end{proof}

We now show that the appearance of short lists at vertices in an independent set is $\ber(p)$-dominated for an appropriate $p$. This is crucial for both Theorems~\ref{thm:recol} and~\ref{thm:girth-thawed}, and this could have many applications in other works.
%it would not surprise the authors if this proved to be the longest lasting contribution of the paper. 
Theorem~\ref{cor:ind_set_corr} can be seen as a strengthening of \eqref{eq:list_tail_bound} from Theorem~\ref{thm:main_simp}. In Theorem~\ref{thm:main_simp}, we could derive \eqref{eq:list_tail_bound} from one application of the induction hypothesis; going deeper into the induction allows us to derive the following stronger statement.
%However, Theorem~\ref{thm:main_simp} is proved by induction (so it follows from itself) and Theorem~\ref{cor:find_good_col} uses a similar argument as \eqref{eq:list_tail_bound}, only it goes deeper into the induction.

\begin{thm}\label{cor:ind_set_corr}
Let $\eps,\Delta,k,\ell$ and $G$ be as in Theorem~\ref{thm:main_simp}. Let $0<p<1$ and let $I$ be an independent set of $G$. Let $\bsigma$ be a uniformly random proper $k$-colouring of $G$.
Then the random variables $\bX_v\coloneqq \{ \ell_{\bsigma}(v) \le p\ell\}$ for $v \in I$ are $\ber(p)$-dominated.
\end{thm}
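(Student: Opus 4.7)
The plan is to unfold the definition of Bernoulli-domination and show directly, via a counting ratio argument, that for every subset $J \subseteq I$ one has $\pr{\forall v\in J:\ \ell_\bsigma(v) \le p\ell} \le p^{|J|}$. The key structural observation is that, because $J \subseteq I$ is independent in $G$, no vertex of $J$ is adjacent to any other vertex of $J$; hence for every proper $k$-colouring $\sigma$ of $G$, the value $\ell_\sigma(v)$ for $v\in J$ is determined by $\sigma' \coloneqq \sigma|_{G\setminus J}$, so that $\ell_\sigma(v) = \ell_{\sigma'}(v)$.

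Using this, I would factor the count of good colourings through $\sC(G\setminus J)$. Explicitly, every $\sigma \in \sC(G)$ is obtained by picking a $\sigma' \in \sC(G\setminus J)$ and then independently choosing, for each $v\in J$, a colour in $L_{\sigma'}(v)$. This gives the identity
\[
|\sC(G)| \;=\; \sum_{\sigma' \in \sC(G\setminus J)} \prod_{v \in J} \ell_{\sigma'}(v),
\]
and the numerator of the probability we wish to bound satisfies
\[
\#\{\sigma \in \sC(G) : \forall v\in J,\ \ell_\sigma(v) \le p\ell\} \;\le\; (p\ell)^{|J|} \cdot |\sC(G\setminus J)|,
\]
since only $\sigma'$ with $\ell_{\sigma'}(v) \le p\ell$ for all $v \in J$ contribute, and each such $\sigma'$ contributes at most $(p\ell)^{|J|}$ extensions.

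For the denominator I would apply Theorem~\ref{thm:main_simp} iteratively. Enumerating $J = \{v_1,\dots,v_m\}$ and letting $G_i \coloneqq G\setminus\{v_1,\dots,v_i\}$, each $G_i$ is triangle-free of maximum degree at most $\Delta$. Applying Theorem~\ref{thm:main_simp} to $G_{i-1}$ at the vertex $v_i$ yields
\[
|\sC(G_{i-1})| \;=\; |\sC(G_i)| \cdot \esp{\ell_{\bsigma_i}(v_i)} \;\ge\; \ell\, |\sC(G_i)|,
\]
where $\bsigma_i$ denotes the uniformly random proper $k$-colouring of $G_i$. Telescoping gives $|\sC(G)| \ge \ell^{|J|} \cdot |\sC(G\setminus J)|$. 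Combining the two bounds,
\[
\pr{\forall v\in J:\ \ell_\bsigma(v)\le p\ell} \;=\; \frac{\#\{\sigma\in \sC(G):\forall v\in J,\ \ell_\sigma(v)\le p\ell\}}{|\sC(G)|} \;\le\; \frac{(p\ell)^{|J|}\,|\sC(G\setminus J)|}{\ell^{|J|}\,|\sC(G\setminus J)|} \;=\; p^{|J|},
\]
which is exactly the Bernoulli-domination inequality for the subset $J$.

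There is no serious obstacle; the proof is really just the iterated version of \eqref{eq:list_tail_bound} combined with the independence of $J$. The only point that requires care is the independence hypothesis: without it, vertices of $J$ would interact in a colouring and the factorisation $|\sC(G)| = \sum_{\sigma'}\prod_v \ell_{\sigma'}(v)$ would fail, breaking both the numerator bound and the interpretation of the ratio. With independence in hand, Theorem~\ref{thm:main_simp} supplies the matching lower bound for $|\sC(G)|$ vertex by vertex, and the result drops out.
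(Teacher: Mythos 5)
Your proposal is correct and follows essentially the same route as the paper's own proof: it bounds the numerator by $(p\ell)^{|J|}\,|\sC(G\setminus J)|$ using the independence of $J$, and bounds the denominator by $\ell^{|J|}\,|\sC(G\setminus J)|$ via iterated applications of Theorem~\ref{thm:main_simp}, then takes the ratio. The only difference is presentational (you spell out the factorisation identity and telescoping explicitly), but the decomposition, the key use of independence, and the appeal to the induction-style lower bound are exactly what the paper does.
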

\begin{proof}
Let $J\subseteq I$, and label the vertices of $J$ by $\{v_i\}_{i\in [s]}$.
For every graph $H\subseteq G$, we may apply  Theorem~\ref{thm:main_simp} and obtain that, for all $v \in V(H)$, if $\bsigma$ is drawn uniformly at random from $\sC(H\setminus v)$, then $\esp{\ell_{\bsigma}(v)}\ge \ell$.  
In other words, we have $|\sC(H)|\ge \ell |\sC(H\setminus v)|$. 
Repeated applications of this identity yield that 
$|\sC(G)| \ge |\sC(G\setminus J)|\ell^s$. 
So we have 
\begin{align}
    \esp{\prod_{v\in J} \bX_v}
    =\frac{\#\{\sigma \in \sC(G): \ell_{\sigma}(v_1),\dots,\ell_{\sigma}(v_s)\le p\ell\}}{|\sC(G)|}  
    \le \frac{|\sC(G\setminus J)|\cdot (p\ell)^s}{|\sC(G\setminus J)|\cdot \ell^s} = p^s.
\end{align}
The upper bound for the numerator in the above inequality comes from two facts that rely on $J$ being an independent set. First, given a colouring $\sigma_0 \in \sC(G\setminus J)$, the number of extensions of $\sigma_0$ to a colouring $\sigma \in \sC(G)$ is $\prod_{v\in J} \ell_{\sigma_0}(v)$. Second, given such an extension $\sigma$, we have $\ell_\sigma(v)=\ell_{\sigma_0}(v)$ for every $v\in J$. 
We conclude that $(\bX_v)_{v\in I}$ are $\ber(p)$-dominated.
\end{proof}

Leveraging Theorem~\ref{cor:ind_set_corr}, and the fact that neighbourhoods induce independent sets, we can prove the following exponential upper bound on the likelihood of short lists. 
It also requires Lemma~\ref{lem:MOL-list-conc}, the proof of which is adapted from that of \cite[Lemma 7]{Mol19} and is deferred to the Appendix.

\begin{lemma}
\label{lem:MOL-list-conc}
Let $G$ be a triangle-free graph, let $v \in V(G)$ and let $\sigma_0$ be a proper $k$-colouring of $G\setminus N[v]$ (for some $k$), with at least one extension to $G$. 
Then if $\bsigma$ is the uniformly random extension of $\sigma_0$ to $G$, writing $\ell \coloneqq \esp{\ell_{\bsigma}(v)}$, we have 
\[
\pr{\ell_{\bsigma}(v) \le (1-\delta)\ell} \le \mathrm{e}^{-\frac{\delta^2\ell}{2}},
\]
for all $\delta \in (0,1)$.
\end{lemma}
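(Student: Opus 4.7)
The plan is to adapt Molloy's argument from \cite[Lemma~7]{Mol19} by exploiting the triangle-freeness of $G$ to rewrite $\ell_{\bsigma}(v)$, up to a size-biasing correction, as a sum of negatively associated Bernoullis.

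Since $G$ is triangle-free, $N(v)=\{u_1,\ldots,u_d\}$ is independent in $G$, so any tuple $(c_i) \in \prod_{i} L_{\sigma_0}(u_i)$ yields a proper colouring of $G\setminus\{v\}$ extending $\sigma_0$. Let $\btau$ denote the random colouring in which each $\btau(u_i)$ is drawn independently and uniformly from $L_{\sigma_0}(u_i)$; by the previous observation, $\btau$ is distributed exactly as the uniformly random extension of $\sigma_0$ to $G\setminus\{v\}$. For $j\in[k]$, set $\bY_j \coloneqq \prod_{i\in[d]} \mathbf{1}[\btau(u_i)\neq j]$, so that $\ell_{\btau}(v)=\sum_{j\in[k]}\bY_j$. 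The family $\{\bY_j\}$ is negatively associated, since the underlying indicators $\mathbf{1}[\btau(u_i)=j]$ are independent across $i$ and, for each fixed $i$, form a $0/1$ partition in $j$ (hence are negatively associated there). The standard Chernoff lower-tail bound for sums of negatively associated Bernoullis then yields
\[
\pr{\ell_{\btau}(v)\le (1-\delta)\ell'} \le \mathrm{e}^{-\delta^2\ell'/2}, \qquad \text{where } \ell' \coloneqq \esp{\ell_{\btau}(v)}.
\]

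It remains to transfer this concentration to the uniformly random extension $\bsigma$ of $\sigma_0$ to $G$. Because $N(v)$ is independent in $G$, each proper colouring $\tau$ of $G\setminus\{v\}$ extending $\sigma_0$ admits exactly $\ell_\tau(v)$ extensions to $G$; hence the distribution of $\bsigma|_{V(G)\setminus\{v\}}$ equals $\btau$ biased by the weight $\ell_{\btau}(v)$. Combining this with the variance bound $\mathrm{Var}(\ell_{\btau}(v))\le \ell'$ (an immediate consequence of negative association of the $\bY_j$'s), one obtains
\[
\ell \;=\; \frac{\esp{\ell_{\btau}(v)^2}}{\ell'} \;=\; \ell' + \frac{\mathrm{Var}(\ell_{\btau}(v))}{\ell'} \;\le\; \ell'+1,
\]
together with the size-biased tail identity
\[
\pr{\ell_{\bsigma}(v)\le s} \;=\; \frac{\esp{\ell_{\btau}(v)\,\mathbf{1}[\ell_{\btau}(v)\le s]}}{\ell'} \;\le\; \frac{s}{\ell'}\,\pr{\ell_{\btau}(v)\le s}.
\]
Setting $s=(1-\delta)\ell$ and invoking the Chernoff bound from the previous paragraph (using $\ell'\ge \ell-1$ to absorb the slight shift of the deviation parameter) then delivers the stated inequality.

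The principal obstacle is this last transfer step: $\bsigma$ is not a product measure on $N(v)$, so the Chernoff bound cannot be applied to it directly. Routing through the product measure $\btau$ is natural, but forces one to compare $\ell$ with $\ell'$ and to verify that neither the size-biasing factor $s/\ell'$ nor the small shift in the deviation parameter spoils the exponential bound. The $O(1)$ gap between $\ell$ and $\ell'$, itself furnished for free by negative association, is precisely what makes these corrections harmless and the target bound survive intact.
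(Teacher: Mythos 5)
Your first paragraph is, up to reformulation, the paper's own proof. The paper also works on the product measure over $\prod_i L_{\sigma_0}(u_i)$ (i.e.\ the uniform extension of $\sigma_0$ to $G\setminus v$), shows that the events $E_x \coloneqq \{x \in \bsigma(N(v))\}$ are negatively correlated, and invokes Lemma~\ref{lem:chernoff}. The only difference is how negative correlation is established: you cite the classical fact that the indicator vector of a single multinomial draw is negatively associated and that NA is preserved by independent unions and by monotone functions of disjoint index blocks, while the paper constructs an explicit coupling (resample $\bsigma(u)$ uniformly from $L(u)\setminus x$ whenever $\bsigma(u)=x$). Both are valid.

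Your second paragraph — the size-biased transfer from $\btau$ (extension to $G\setminus v$) to $\bsigma$ (extension to all of $G$) — does not appear in the paper and is not needed. In Corollary~\ref{cor:exp-list-conc} the lemma is applied to the uniformly random colouring of $G\setminus v$ conditioned on its restriction to $G\setminus N[v]$, which is exactly the product measure $\btau$; no size-biasing arises. (Indeed the paper's coupling yields the conditional law given $\overline{E_x}$ precisely for the product measure, not for the $\ell_\sigma(v)$-biased one, so the ``extension to $G$'' in the statement is to be read as the extension to $G\setminus v$.) Beyond being superfluous, your transfer as written does not actually deliver the stated constant: setting $s=(1-\delta)\ell$, the Chernoff bound applies to $\btau$ at deviation $\delta' = 1-s/\ell' = \delta - (1-\delta)(\ell-\ell')/\ell'$, and one must check $(1-\delta')\,e^{-\delta'^2\ell'/2}\le e^{-\delta^2\ell/2}$. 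The exponent loses roughly $\delta + O(\delta^2)$ from replacing $(\delta,\ell)$ by $(\delta',\ell')$, and the prefactor $\ln(1-\delta')^{-1}$ only recovers about $\delta'$; the inequality fails when $\delta\lesssim 1/\sqrt{\ell'}$. So ``the target bound survives intact'' is not justified — if you really wanted the size-biased statement, you would need to either work with $\ell'$ in the statement, or argue separately that the bound is trivial in the small-$\delta$ regime. Fortunately, none of this is required.
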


\begin{corollary}\label{cor:exp-list-conc}
Let $\eps,\Delta,k,\ell$ and $G$ be as in Theorem~\ref{thm:main_simp}. Then for all $\delta \in (0,1)$ and $v \in V(G)$
\[
\pr{\ell_{\bsigma}(v) < (1-\delta)\ell } \le 2\mathrm{e}^{-\frac{\delta^2\ell}{2}},
\]
where $\bsigma$ is the uniformly random $k$-colouring of $G\setminus v$.
\end{corollary}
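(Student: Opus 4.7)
The plan is to condition on $\bsigma_0 \coloneqq \restrict{\bsigma}{V(G) \setminus N[v]}$ and control $\ell_\bsigma(v)$ in two stages. Triangle-freeness enters twice: first, $N(v)$ being an independent set means that, given $\bsigma_0$, the values $\{\bsigma(u)\}_{u\in N(v)}$ are conditionally independent and uniform on $L_{\bsigma_0}(u)$, placing us in the Coupon-Collector setup; second, the same hypothesis forces $N(u)\setminus\{v\}\subseteq V(G)\setminus N[v]$ for every $u\in N(v)$, and hence $\ell_{\bsigma_0}(u) = \ell_{\bsigma}(u)$, which will allow us to invoke Theorem~\ref{cor:ind_set_corr} on the independent set $N(v)$.

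Writing $\ell^*(\bsigma_0)\coloneqq \esp{\ell_\bsigma(v)\mid \bsigma_0}$, I would decompose
\[
\pr{\ell_\bsigma(v) < (1-\delta)\ell} \le \esp{\pr{\ell_\bsigma(v) < (1-\delta)\ell \mid \bsigma_0}\, \mathbf{1}_{\{\ell^*(\bsigma_0) \ge \ell\}}} + \pr{\ell^*(\bsigma_0) < \ell}.
\]
For the first summand, on $\{\ell^*(\bsigma_0)\ge \ell\}$ I would apply Lemma~\ref{lem:MOL-list-conc} conditionally with $\delta'\coloneqq 1 - (1-\delta)\ell/\ell^*(\bsigma_0)\ge \delta$; a short calculation shows that the exponent $(\ell^*-(1-\delta)\ell)^2/(2\ell^*)$ is increasing in $\ell^*\ge \ell$ and attains its minimum value $\delta^2\ell/2$ at $\ell^*=\ell$, yielding the bound $\mathrm{e}^{-\delta^2\ell/2}$ for this summand.

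For the second summand, the Coupon-Collector Lemma yields
\[
\ell^*(\bsigma_0) \ge (k-\bX)\mathrm{e}^{-(1+1/t)\Delta/(k-\bX)}, \qquad \bX \coloneqq \#\{u\in N(v) : \ell_{\bsigma}(u) \le t\},
\]
and a verbatim repetition of the final computation in the proof of Theorem~\ref{thm:main_simp} (with $t=\lceil 5/\eps\rceil$) shows that $\bX \le \eps k/5$ forces $\ell^*(\bsigma_0) \ge \ell$; contrapositively, $\{\ell^*(\bsigma_0)<\ell\}\subseteq \{\bX > \eps k/5\}$. Applying Theorem~\ref{cor:ind_set_corr} to $G\setminus v$ with independent set $N(v)$ and $p\coloneqq t/\ell$ tells us that the indicators $\{\ell_\bsigma(u)\le t\}_{u\in N(v)}$ are $\ber(p)$-dominated, with $\esp{\bX}\le p\Delta = O(\Delta^{1-\eps/2})$, so Theorem~\ref{thm:concentration} gives $\pr{\bX>\eps k/5}\le \mathrm{e}^{-\eps k/5}$. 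Since $\delta^2\ell/2 \le \Delta^{\eps/2}/2 \ll \eps k/5$ for $\Delta$ large, this is comfortably below $\mathrm{e}^{-\delta^2\ell/2}$, and the two contributions combine to give the claimed factor of $2$.

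The main subtlety I expect to have to handle is matching Lemma~\ref{lem:MOL-list-conc} to the present setting: the lemma is phrased for uniformly random extensions of $\sigma_0$ to $G$, in which $\restrict{\bsigma}{N(v)}$ is weighted by $\ell_{\restrict{\bsigma}{N(v)}}(v)$, whereas here $\bsigma\in \sC(G\setminus v)$ is uniform, making the conditional distribution of $\restrict{\bsigma}{N(v)}$ the unweighted product over $\prod_{u\in N(v)}L_{\bsigma_0}(u)$. The Chernoff-type proof of the lemma (adapted from \cite{Mol19}) should pass through this simpler product measure unchanged, since conditional independence of the $\bsigma(u)$ is exactly what such arguments use; failing that, a direct reweighting via $|\sC(G)|=\esp{\ell_\bsigma(v)}\,|\sC(G\setminus v)|$ bridges the two settings at the cost of an absolute constant, easily absorbed by the factor of $2$ in the statement.
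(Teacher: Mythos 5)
Your proposal is correct and mirrors the paper's proof: both condition on $\restrict{\bsigma}{G\setminus N[v]}$, bound the probability of too many short lists in $N(v)$ via Theorem~\ref{cor:ind_set_corr} together with Theorem~\ref{thm:concentration}, and apply Lemma~\ref{lem:MOL-list-conc} on the good event, with the two exponentially small contributions absorbed into the factor of $2$. Your closing observation about the measure used in Lemma~\ref{lem:MOL-list-conc} is well spotted --- the coupling in the lemma's proof operates on the conditionally-independent product measure over $\prod_{u\in N(v)} L_{\sigma_0}(u)$, exactly the distribution arising here --- though the paper itself applies the lemma silently.
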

%\todo[inline]{Can we improve the exponent?}

\begin{proof}
For every $u\in N(v)$, we let $B_u$ be the random event that $u$ has a short list, i.e. $\ell_\bsigma(u) \le t$ for $t\coloneqq \lceil 5/\eps \rceil$.
Since $G$ is triangle-free, $N(v)$ is an independent set. 
So by Theorem~\ref{cor:ind_set_corr}, the events $(B_u)_{u\in N(v)}$ are $\ber(t/\ell)$-dominated. We apply Theorem~\ref{thm:concentration}, and obtain that the probability that more than $6t\Delta/\ell$ neighbours of $v$ have a short list is at most $\mathrm{e}^{-6t\Delta/\ell}$.

Now, let $\sigma_0$ be a possible realisation of $\restrict{\bsigma}{G\setminus N[v]}$ such that no more than $6t\Delta/\ell$ neighbours of $v$ have a short list in $\sigma_0$. When $\Delta$ is large enough, this is at most $\eps k/5$. In that case, we can repeat the computation in the proof of Theorem~\ref{thm:main_simp}, and obtain that 
\[ \esp{\ell_\bsigma(v)\mid \restrict{\bsigma}{G\setminus N[v]} = \sigma_0} \ge \ell.\]
By applying Lemma~\ref{lem:MOL-list-conc},
we obtain that 
\[ \pr{\ell_\bsigma(v) \le (1-\delta)\ell \mid\restrict{\bsigma}{G\setminus N[v]}=\sigma_0} \le \mathrm{e}^{-\frac{\delta^2\ell}{2}}.
\]
Overall, the probability that $\ell_\bsigma(v) \le (1-\delta)\ell $ is therefore at most $\mathrm{e}^{-6t\Delta/\ell}+\mathrm{e}^{-\frac{\delta^2\ell}{2}} \le 2\mathrm{e}^{-\frac{\delta^2\ell}{2}}$
\end{proof}

It is interesting to observe that it is not possible to have a stronger form of Theorem~\ref{cor:ind_set_corr} where we replace Bernoulli-domination with negative correlation. Indeed, given a graph $G$, if $u,v\in N(v)$ share the same neighbourhood, then the lists of $u$ and $v$ are perfectly correlated.

%Note that the value of $p$ in Theorem~\ref{cor:ind_set_corr} is much larger than the bound in Corollary~\ref{cor:exp-list-conc}. 

%This is because the appearances of short lists on non-adjacent vertices are not negatively correlated, indeed consider a complete bipartite graph. 
%It is therefore quite surprising and remarkably convenient that we have $\ber(p)$-domination as in Theorem~\ref{cor:ind_set_corr}. 

%The following coupon collector lemma is a randomised version of a result in \cite{Mol19}. 
%It allows us to bound from below the expected number of available colours for $v$ when we recolour $N(v)$ uniformly at random, provided that the expected size of a list of a vertex in $N(v)$ is not too small and a weak lower tail bound holds.

\bigskip

We finish this subsection by proving the following bootstrap percolation result which may be of independent interest. 
The setup is as follows. 
Consider the rooted  $\Delta$-ary tree $T$ of depth $f$. 
That is, the tree constructed by starting from the root $r$, adding $\Delta$ \emph{children}, and then adding $\Delta$ children to each of the leaves, and repeating this a total $f$ times, so that the distance from $r$ to each of the $\Delta^f$ leaves is exactly $f$.
%We refer to children of children of (and so on of) $v$ as \emph{descendants} of $v$.
At step $1$, we randomly activate a subset of the leaves. At each step $i\ge 2$, we activate a vertex if at least $s$ of its children have been activated at step $i-1$.
We call this process \emph{$s$-upward percolation}.
Clearly, this process reaches a stable state in at most $f$ steps. 
We will be interested in the probability that the root $r$ is activated at the end of the process. 
Note that once we have chosen which leaves to activate, the process is deterministic. 
We are interested in the situation where the activation probabilities for the leaves are $\ber(p)$-dominated. 

We note that it is essential that the activation events are $\ber(p)$-dominated. 
Indeed, an adversary only needs to activate $s^f$ leaves in order to activate the root and if $s<\Delta$, then this is a vanishing proportion of all leaves as $f\rightarrow \infty$.
However,  in order to do so the adversary's activated leaves must be the leaves of  an $s$-ary tree. 
This a very low entropy strategy,
%with many of the activated leaves grouped  under the same parents, and many of their parents themselves grouped under the same parents and so on. 
and we can use the renormalisation properties of $\ber(p)$-dominated random variables  (Corollary~\ref{cor:renorm_ber}) to show that this is very unlikely.

\begin{lemma}\label{lem:percolation}
Let $0<p<1$ be a real value, and 
let $\Delta \ge 2$, $s \ge \max \{6p\Delta, 3\ln \Delta\}$, and $f\ge 1$ be integers.
Suppose we perform $s$-upward percolation on a $\Delta$-ary tree $T$ of depth $f$ rooted in a vertex $r$, where the events that the leaves are activated are $\ber(p)$-dominated. 
Then we have\footnote{With a more careful analysis one can replace $\ceil{f/2}$ by $(1-o_f(1))f$.}
\[
\pr{ \mbox{$r$ is activated}} \le \exp\pth{-s^{\ceil{f/2}}}. 
\]
\end{lemma}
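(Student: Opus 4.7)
The strategy combines a witness-tree union bound over the top $\lceil f/2 \rceil$ levels of $T$ with an iterated application of the argument in the proof of Corollary~\ref{cor:renorm_ber} over the bottom $\lfloor f/2 \rfloor$ levels. First, by backward induction from the percolation rule, the root $r$ is activated if and only if there exists an $s$-ary subtree $W \subseteq T$ rooted at $r$, all of whose vertices are activated. Writing $D := \lceil f/2 \rceil$, the restriction of such a $W$ to its top $D$ levels is an $s$-ary subtree $W_{\mathrm{top}}$ whose $s^D$ vertices at depth $D$ are each activated in the percolation restricted to their respective depth-$\lfloor f/2 \rfloor$ subtrees.

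To control the depth-$D$ activations jointly, I would propagate Bernoulli-domination up from the leaves level by level: if the activations at depth $f-i+1$ form a $\ber(q_{i-1})$-dominated family $(\bX_u)$ and $\bR_v := \{\sum_{u\in Q_v} \bX_u \ge s\}$ denotes the activation of a vertex $v$ at depth $f-i$ (with $Q_v$ the set of children of $v$), then for any $J$,
\[
\esp{\prod_{v \in J} \bR_v} \le \pr{\sum_{u \in \bigcup_{v\in J} Q_v} \bX_u \ge s|J|} \le \exp(-s|J|),
\]
by Theorem~\ref{thm:concentration}, using the threshold condition $s|J| \ge 6 q_{i-1} |J|\Delta$. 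We may therefore take $q_i := \exp(-s)$, and the condition $s \ge 6 q_{i-1}\Delta$ holds at $i=1$ by the hypothesis $p \le s/(6\Delta)$ and propagates inductively since $\exp(-s) \le s/(6\Delta)$ whenever $s \ge 3\ln\Delta$. After $\lfloor f/2 \rfloor$ iterations, the activations at depth $D$ are $\ber(q^*)$-dominated with $q^* \le \exp(-s)$.

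A union bound over $W_{\mathrm{top}}$ then yields
\[
\pr{r \text{ activated}} \le \binom{\Delta}{s}^{(s^D - 1)/(s-1)} (q^*)^{s^D} \le \pth{\frac{\mathrm{e}\Delta}{s}}^{2 s^D} \exp(-s^{D+1}) \le \exp\pth{s^D \pth{2\ln\tfrac{\mathrm{e}\Delta}{s} - s}},
\]
where the counting factor uses $\binom{\Delta}{s}\le (\mathrm{e}\Delta/s)^s$ together with $(s^D-1)/(s-1)\le 2s^{D-1}$, and the second factor uses Bernoulli-domination at depth $D$. Under $s \ge 3\ln\Delta$, substituting $\ln(\Delta/s) \le s/3 - \ln s$ gives $2\ln(\mathrm{e}\Delta/s) - s \le 2 - s/3 - 2\ln s \le -1$ for all $s \ge 5$, which covers every nontrivial case since $s > \Delta$ forces $\pr{r \text{ activated}} = 0$; the base case $f = 1$ follows directly from Theorem~\ref{thm:concentration}.

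The main obstacle I expect is constant bookkeeping: the crude bound $q^* \le \exp(-s)$ only narrowly defeats the subtree-counting factor near the threshold $s \approx 3\ln\Delta$, and one must use the refined estimate $\ln(\Delta/s) \le s/3 - \ln s$ rather than the crude $\ln(\Delta/s) \le s/3$ to close the inequality. A cleaner alternative is to track the full Chernoff form of Theorem~\ref{thm:concentration} in each iteration, which produces $q^* \le \exp(-c\, s^{\lfloor f/2 \rfloor})$; this delivers the desired inequality with ample slack and would also recover the stronger footnote exponent $s^{(1-o_f(1))f}$.
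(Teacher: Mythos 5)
Your argument is correct but structured differently from the paper's. The paper proves a \emph{stronger} statement --- that the root-activation events of any collection of disjoint depth-$f$ trees are $\ber(\exp(-s^{\ceil{f/2}}))$-dominated --- by a doubling induction on $f$: it cuts at depth $i=\ceil{f/2}$, applies the inductive hypothesis to the subtrees below the cut, then applies Theorem~\ref{thm:concentration} (via Corollary~\ref{cor:renorm_ber}) once across the cut, roughly squaring the exponent at each step; the technical content is verifying $\sigma\ge 6\mu$, split into cases $f\le 5$ and $f>5$. You instead propagate $\ber(\exp(-s))$-domination \emph{linearly} from the leaves up to depth $D=\ceil{f/2}$, with the domination parameter held constant, and then extract the large exponent from a witness-tree union bound over all $s$-ary subtrees of the top $D$ levels, charging $(\mathrm{e}\Delta/s)^{2s^D}$ for the combinatorics and $\exp(-s^{D+1})$ from the Bernoulli-domination at depth $D$. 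This makes the origin of the exponent $s^{\ceil{f/2}}$ --- the number of witness leaves at depth $D$ --- transparent, and as you observe, replacing the crude $\exp(-s)$ per-step bound with the full Chernoff form of Theorem~\ref{thm:concentration} would strengthen the propagation to a doubly exponential decay and recover the footnote's $s^{(1-o_f(1))f}$ exponent with the same witness-tree count. The paper's route is more compact and avoids the subtree counting entirely, at the cost of its threshold casework; yours is more explicit about where the exponent comes from and points more directly at the sharper bound. (A minor cosmetic point: your verification of $2\ln(\mathrm{e}\Delta/s)-s\le -1$ in fact holds for all $s\ge 3$, not just $s\ge 5$, though as you note the cases $s\le 4$ force $s>\Delta$ under $s\ge 3\ln\Delta$, so nothing is lost either way.)
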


\begin{proof}
%Let us prove that the root vertex is activated with probability at most $\mathrm{e}^{-s^{\ceil{f/2}}}$, under the assumption that $s\ge \max \{6p\Delta, 3\ln\Delta\}$ and $\Delta \ge 2$. 

Given a vertex $v\in V(T)$, let $T[v]$ denote the unique $\Delta$-ary subtree of $T$ rooted at $v$. 
For every $i\le f$, we let $V_i(T)$ denote the set of nodes at depth $i$ in $T$ (in particular, we have $V_0(T) = \{r\}$).
If $r$ is activated, then there is an $s$-ary subtree of $T$ of depth $f$ that contains only activated vertices. In particular, at each depth $i\le f$, there are at least $s^i$ activated vertices. We conclude that, for every fixed $i\le f$, we have
\[ \pr{\mbox{$r$ is activated}} \le \pr{\mbox{$V_i(T)$ contains at least $s^i$ activated vertices}}.\]

Let $T_1, \ldots, T_n$ be a collection of disjoint trees of depth $f$, of respective roots $r_1, \ldots, r_n$. 
We perform $s$-upward percolation on each of them, and assume that the events that the leaves are activated are $\ber(p)$-dominated. Denote by $A_j$ the random event that $r_j$ is activated, for every $j\in [n]$. 
We will show by induction on $f$ that the events $(A_j)$ are $\ber(q)$-dominated, for $q=\exp(-s^{\ceil{f/2}})$.
For the base case $f=1$, for every $j\in [n]$ we have
\[ \pr{\mbox{$r_j$ is activated}} \le \pr{\mbox{$V_1(T_j)$ contains at least $s$ activated vertices}} \le \mathrm{e}^{-s} = q,\]
by Theorem~\ref{thm:concentration} applied with $\mu \coloneqq p\Delta$ and $\sigma \coloneqq s\ge 6\mu$.
We are in the setting of Corollary~\ref{cor:renorm_ber}, so the events $(A_j)$ are $\ber(q)$-dominated.

We now assume that $f\ge 2$, and we fix $i\coloneqq \ceil{f/2}$. Let us consider any tree $T_j$ for $j\in [n]$.
For every $x\in V_i(T_j)$, let $A_x$ be the event that $x$ is activated.
We apply induction on the collection of trees $T_j[x]$ of depth $f-i \ge (f-1)/2$, for $x\in V_i(T_j)$, and obtain that the events $(A_x)_{x\in V_i(T_j)}$ are $\ber(q')$-dominated, with $q' = \exp(-s^{\ceil{(f-1)/4}})$.
We apply Theorem~\ref{thm:concentration} with $\mu \coloneqq q'\Delta^i$ and $\sigma \coloneqq s^i$ and obtain that 
\[ \pr{\mbox{$r_j$ is activated}} \le \pr{\mbox{$V_i(T_j)$ contains at least $s^i$ activated vertices}} \le \mathrm{e}^{-s^i},\]
if we can prove that $\sigma\ge 6\mu$. We are again in the setting of Corollary~\ref{cor:renorm_ber}, so the events $(A_j)_{j\in [n]}$ are $\ber(q)$-dominated with $q=\exp(-s^{\ceil{f/2}})$, as desired.

We now prove that we have $\sigma\ge 6\mu$.
If $f \le 5$, we have $q' = \mathrm{e}^{-s}$ and $i\le 3$, so this reduces to $(s/\Delta)^3 \ge 6\mathrm{e}^{-s}$. Since $s^3 \ge (3\ln 2)^3 > 6$, it suffices to prove that $\mathrm{e}^s \ge \Delta^3$, which holds by assumption on $s$.
If $f>5$, it suffices to prove that $s^{\ceil{(f-1)/4}} \ge \ceil{f/2}\ln \Delta$. Since $s\ge 3\ln \Delta \ge 2$, it suffices to prove that $2^{\ceil{(f-5)/4}} \ge \frac{1}{3}\ceil{f/2}$. It is easy to check that this holds for every integer $f>5$.
\end{proof}

\subsection{Proofs of Theorem~\ref{thm:recol} and Theorem~\ref{thm:girth-thawed}}
\label{sec:proofs}

Relying on the tail-bound given by Corollary~\ref{cor:exp-list-conc} for the probability of having a short list, we can prove Theorem~\ref{thm:recol}.
\begin{proof}[Proof of Theorem~\ref{thm:recol}]
Fix a vertex $v\in V(G)$, and let $\bsigma$ be the uniformly random $k$-colouring of $G\setminus v$.
We will actually prove that, with high probability (with respect to $\Delta$), we have $\ell_\bsigma(u)\ge 2$ for every $u\in N(v)$. Under that condition, for every colour $x\in [k]$, we may sequentially resample $\bsigma(u)$ from $L_\bsigma(u)\setminus x$ for every $u\in N(v)$ (since $N(v)$ is an independent set, the lists $(L_\bsigma(u))_{u\in N(v)}$ are not affected by this resampling). Then we can set $\bsigma(v)\gets x$. This proves that $v$ is both $(\Delta+1)$-loose and thawed.
%First we note that it suffices to show that with probability $1-\mathrm{e}^{-d^\beta}$, given a sample from $\bsigma$ we can recolour just $N(v)$ so that all vertices in $N(v)$ avoid some colour in $x \in [k]$ that we choose (we can choose whichever colour we please). 

For every $u\in N(v)$, we let $B_u$ be the random event that $u$ has a short list, i.e. $\ell_\bsigma(u)\le \ell/2$, where $\ell =\Delta^{\eps/2}$.
By Corollary~\ref{cor:exp-list-conc} we have 
$\pr{B_u} \le \mathrm{e}^{-\ell/8}$. 
So by a union bound, the probability that no event $B_u$ occurs is at least $1-\Delta \mathrm{e}^{-\ell/8} \ge 1-\mathrm{e}^{-\Delta^{\eps/3}}$, assuming that $\Delta$ is large enough.
The conclusion follows.
%Clearly if no $B_u$ occurs, then for every $x\in [k]$ we can recolour all of $N(v)$ to avoid $x$. 
\end{proof}

Relying on the percolation result stated in Lemma~\ref{lem:percolation}, we can sketch a proof of Theorem~\ref{thm:girth-thawed}. The full-length proof lies in the Appendix.
\begin{proof}[Proof Sketch for Theorem~\ref{thm:girth-thawed}]
We let $g\coloneqq (2+o(1)) \ln \ln n / \ln \Delta$, and assume that the girth is at least $2g+2$ (this holds under the assumption that it is at least $\ln \ln \Delta$ when $\Delta$ is large enough).
We start by fixing a vertex $v$ and sampling from $\bsigma$.
We then deterministically recolour the vertices at distance $g,g-1,g-2,\dots$ from $v$ layer by layer (each layer is an independent set), so as to make the list sizes in the next layer as large as possible. 
If at the end of this process, all $u\in N(v)$ have at least $2$ colours on their list, then it is straightforward to recolour $v$ as we please by first recolouring its neighbours and so $v$ is thawed and clearly we have changed the colours of at most $\Delta^g = \bigO{(\ln n)^2}$ vertices.
Thus $v$ is also $\bigO{(\ln n)^2}$-loose.
Using Corollary~\ref{cor:find_good_col}, we note that after the above process a  vertex $w$ with $\dist(v,w)< g$ will only have a short list if at least $s\coloneqq \Omega(\Delta/\ln \Delta)$ of its children have short lists. 
This allows us to bound from above the probability that $u\in N(v)$ has a short list by the probability that the $s$-upward percolation process on the $(\Delta-1)$-ary tree rooted at $u$ ends with $u$ activated. 
Because the vertices $w$ at distance $g$ from $v$ form an independent set, Theorem~\ref{cor:ind_set_corr} tells us that the distribution of short lists for said $w$ are $\ber(p)$-dominated. 
Thus we can use the percolation bound from Lemma~\ref{lem:percolation} to derive the desired result. 
\end{proof}

\subsection{Constructions}\label{sec:constructions}
One could wonder whether the statement of Theorem~\ref{thm:girth-thawed} (w.h.p. all vertices are simultaneously thawed) can be extended to the case where $G$ is triangle-free (rather than having girth $\ln \ln n$).
We now show that this is not possible, even if the girth of $G$ is an arbitrarily large constant. 

We rely on the following construction from \cite{BBP21}.

\begin{proposition}[Bonamy, Bousquet, Perarnau; 2021]
\label{prop:k-lift}
For every integers $d,g \ge 3$ there exists a $d$-regular graph $G$ of girth at least $g$ that has a proper $(d+1)$-colouring where every vertex is frozen.
\end{proposition}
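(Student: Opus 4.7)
The plan is to realise $G$ as a sufficiently high-girth \emph{lift} of $K_{d+1}$ and to propagate its canonical rigid colouring upwards. Recall that a \emph{$k$-lift} of a graph $H$ is a graph on vertex set $V(H)\times[k]$ obtained by replacing each edge $uv\in E(H)$ with an arbitrary perfect matching $\pi_{uv}$ between $\{u\}\times[k]$ and $\{v\}\times[k]$. Any $k$-lift of a $d$-regular graph is $d$-regular, and the projection onto the first coordinate is a graph homomorphism, so colour classes of $H$ pull back cleanly.

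First I would fix the base graph $H\coloneqq K_{d+1}$ together with its canonical proper $(d+1)$-colouring $\tau_0$ assigning a distinct colour to each vertex. Every vertex $v$ of $H$ satisfies $\tau_0(N_H(v))=[d+1]\setminus\{\tau_0(v)\}$, so $v$ is rigid in $\tau_0$. Now, given any $k$-lift $G$ of $H$, I define $\tau(v,i)\coloneqq \tau_0(v)$. This is a proper $(d+1)$-colouring of $G$ since every edge of $G$ projects onto an edge of $H$. Moreover, the $d$ neighbours of any vertex $(v,i)\in V(G)$ have the form $(u,\pi_{uv}(i))$ with $u\in N_H(v)$, and so collectively carry the $d$ distinct colours $\{\tau_0(u):u\in N_H(v)\}=[d+1]\setminus\{\tau_0(v)\}$. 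Hence every vertex of $G$ is rigid in $\tau$; rigidity at every vertex forces the cluster of $\tau$ in $\cH(G)$ to be the singleton $\{\tau\}$, and so every vertex is trivially frozen.

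It remains to choose the lift so that $G$ has girth at least $g$. One route is to invoke the classical fact (going back to Amit--Linial) that a uniformly random $k$-lift of any fixed connected graph has girth $\Omega(\log_d k)$ with probability bounded away from $0$, so any sufficiently large $k$ works. Alternatively, one can iteratively apply $2$-lifts and choose the signatures to strictly increase the girth at each step, in the spirit of Bilu--Linial. Either approach yields, for every $g\ge 3$, a $d$-regular graph $G$ of girth at least $g$ admitting the desired frozen $(d+1)$-colouring, completing the proof.

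The main obstacle is really only the last step, i.e.\ guaranteeing that arbitrarily high girth can coexist with being a lift of $K_{d+1}$; but this is standard in the theory of random lifts, so no new argument is required. The conceptual content is entirely in the first two paragraphs: the frozen colouring is forced by the simple observation that rigidity — a purely local property — is automatically inherited by any covering map, combined with the fact that $K_{d+1}$ carries a trivially rigid colouring.
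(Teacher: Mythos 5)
The paper does not prove Proposition~\ref{prop:k-lift} itself; it cites it directly from~\cite{BBP21}. Your reconstruction matches the expected construction (the label \texttt{prop:k-lift} already indicates that the original proof is via lifts of $K_{d+1}$), and the heart of the argument --- that a $k$-lift is a covering map so the canonical rigid colouring of $K_{d+1}$ pulls back to a colouring $\tau$ in which every vertex sees all $d$ other colours on its neighbourhood, and that such a $\tau$ admits no proper colouring at Hamming distance $1$, so its $1$-cluster is the singleton $\{\tau\}$ and every vertex is trivially frozen --- is correct and cleanly presented.

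The one step worth refining is the girth claim. Saying that a random $k$-lift has ``girth $\Omega(\log_d k)$ with probability bounded away from $0$'' overstates the standard result: the probability of girth at least $g$ decays rapidly as $g$ grows, so it cannot be bounded away from $0$ uniformly along $g=\Theta(\log_d k)$. What you actually need, and what is true, is that for each \emph{fixed} $g$ the number of cycles of length less than $g$ in a random $k$-lift converges, as $k\to\infty$, to a Poisson random variable whose mean does not depend on $k$; hence $\pr{\mbox{girth}\ge g}$ tends to a positive constant depending only on $g$ and the base graph, so some $k$-lift of girth at least $g$ exists once $k$ is large enough. The alternative via iterated Bilu--Linial $2$-lifts is more delicate than you suggest: a single $2$-lift need not strictly increase the girth, since the parities assigned to the girth cycles are constrained by linear relations in the $\mathbb{F}_2$-cycle space and cannot in general all be made odd simultaneously. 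Stick with the random-lift argument; the rest of the proof is sound.
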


\begin{proposition}
\label{prop:frozen}
Let $g\ge 3$ be a given integer. Then for every integer $d\ge 3$ and $n$ large enough, there exists an $n$-vertex $d$-regular graph $G$ of girth at least $g$ such that, letting $\bsigma$ be a uniformly random proper $(d+1)$-colouring of $G$, there are w.h.p. $\Theta(n)$ frozen vertices in $\bsigma$ as $n\to \infty$.
\end{proposition}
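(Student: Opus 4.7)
The plan is to realize $G$ as an (essentially) disjoint union of many copies of the graph supplied by Proposition~\ref{prop:k-lift} and to exploit the fact that on each copy the uniform distribution hits an all-frozen coloring with positive constant probability.

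First, I would apply Proposition~\ref{prop:k-lift} to obtain a $d$-regular graph $H$ of girth at least $g$ on $h$ vertices together with a proper $(d+1)$-coloring $\tau$ of $H$ in which every vertex is frozen. Since $H$ is $d$-regular and $\tau$ freezes every vertex $v$, the $d$ neighbors of $v$ must realize every color of $[d+1]\setminus\{\tau(v)\}$; in particular $\tau$ is surjective onto $[d+1]$. Consequently the stabilizer of $\tau$ under the natural action of $\mathrm{Sym}(d+1)$ on colorings is trivial, and the orbit of $\tau$ yields $(d+1)!$ distinct all-frozen proper $(d+1)$-colorings of $H$. Since $\sC_{d+1}(H)$ is finite, a uniformly random proper $(d+1)$-coloring of $H$ is all-frozen with probability at least $p_0 \coloneqq (d+1)!/|\sC_{d+1}(H)| > 0$.

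Given $n$ large enough, I would set $m \coloneqq \lfloor n/h \rfloor$ and take $G$ to be the disjoint union of $m$ copies $H_1,\ldots,H_m$ of $H$, padded with a $d$-regular graph $F$ of girth at least $g$ on the remaining $n - mh$ vertices so that $|V(G)| = n$ exactly (such fillers can be assembled from small lifted graphs of various sizes once $n$ is sufficiently large; in any case this padding involves $o(n)$ vertices and cannot affect a $\Theta(n)$ conclusion). Then $G$ is $d$-regular of girth at least $g$, and $\sC_{d+1}(G) = \sC_{d+1}(F)\times \prod_{i=1}^m \sC_{d+1}(H_i)$. Hence a uniformly random $\bsigma\in \sC_{d+1}(G)$ has independent restrictions $\bsigma|_{H_i}$, each uniform over $\sC_{d+1}(H_i)$. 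Moreover, any single-vertex recoloring in $\cH(G)$ stays within one connected component of $G$, so the cluster of $\bsigma$ in $\cH(G)$ factors as the product of the clusters of its component-restrictions; therefore a vertex of $H_i$ is frozen in $\bsigma$ (viewed as a coloring of $G$) if and only if it is frozen in $\bsigma|_{H_i}$ (viewed as a coloring of $H_i$).

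Finally, letting $\bZ$ count the indices $i\in[m]$ for which $\bsigma|_{H_i}$ is all-frozen, $\bZ$ stochastically dominates a $\mathrm{Binomial}(m,p_0)$ random variable, so a Chernoff bound yields $\bZ \ge p_0 m/2$ with high probability, providing at least $p_0 m h/2 = \Theta(n)$ frozen vertices in $\bsigma$. The only conceptually non-routine step is the factorization of clusters of $\cH(G)$ across the connected components of $G$, which simultaneously provides the independence of the per-copy events and the identification of frozen vertices across scales; the main practical nuisance is the padding argument required to hit every large $n$ exactly, but this is just bookkeeping.
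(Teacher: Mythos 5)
Your proof is correct and follows essentially the same route as the paper: take disjoint copies of the graph from Proposition~\ref{prop:k-lift}, observe that on each copy a uniformly random proper $(d+1)$-colouring hits an all-frozen colouring with positive constant probability, note that these events are independent across copies, and conclude with a Chernoff bound. You additionally spell out the padding needed when $n$ is not a multiple of $|V(H)|$ and the factorisation of clusters of $\cH(G)$ across connected components, both of which the paper's proof leaves implicit.
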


\begin{proof}
    Let $G_0$ be the $d$-regular graph $G$ of girth $g$ given by Proposition~\ref{prop:k-lift}. We let $n_0\coloneqq |V(G_0)|$, and let $\sigma_0 \in \sC_{d+1}(G_0)$ be such that every vertex $v\in V(G_0)$ is frozen in $\sigma_0$. 
    %We may assume that $G_0$ is connected, otherwise we could use one of its connected components instead. So $|\C(G_0)| \le d^n_0$.
    Let $G$ consist of $n/n_0$ disjoint copies of $G$, and let $\bsigma$ be a uniformly random proper colouring of $G$. For every copy $H$ of $G_0$ within $G$, let $E_H$ be the event that $\restrict{\bsigma}{V(H)}=\sigma_0$. We have $\pr{E_H}\ge 1/(d+1)^{n_0}$, and the random events $(E_H)$ are independent. So the number of copies of $G_0$ in $G$ that are entirely frozen in $\bsigma$ follows the binomial distribution $\mathcal{B}(n/n_0, (d+1)^{-n_0})$. By the standard estimates given by Chernoff-Hoeffding bounds (see Lemma~\ref{lem:chernoff}), the probability that less than $\frac{n}{2n_0(d+1)^{n_0}}$ copies of $G_0$ are entirely frozen is at most $\exp(-\frac{n}{8n_0(d+1)^{n_0}}) \underset{n\to\infty}{\to} 0$.
    We conclude that w.h.p. the number of frozen vertices in $\bsigma$ is at least $\frac{1}{2}(d+1)^{-n_0} \cdot n = \Theta(n)$ as $n\to\infty$.
\end{proof}

\section{Acknowledgement}
A substantial part of this work has been done during the online workshop \emph{Entropy Compression and Related Methods} which took place in March 2021. We are thankful to the organisers, Ross J. Kang and Jean-Sébastien Sereni, and more generally to the Sparse Graph Coalition for making that work possible. 
We are grateful to Felix Joos for proofreading the early versions of that paper.
We also thank Matthieu Rosenfeld and Mike Molloy for insightful discussions.

\bibliographystyle{alpha}
\bibliography{moment}

\appendix

\section{Remaining Proofs and Required Results}

In this section we collect the remaining proofs and results. 
They are written in order of dependency, so that all results have either been cited or proved by the time they are used. 
Let $G$ be a graph and $X\subset V(G)$.
We write $G[X]$ for the subgraph of $G$ induced by $X$. 
%Further, given a subgraph $H\subset G$ and a vertex $v\in V(H)$, we write $\deg_H(v) = |N(v)\cap V(H)|$. 
We need the following simple observation.

\begin{lemma}
\label{lem:maxdeg}
Let $H$ be a graph, and $L$ a list-assignment of $H$ such that $|L(v)|\ge \deg(v)+1$ for every vertex $v\in V(H)$. 
If $\bsigma$ is a uniformly random proper $L$-colouring of $H$, then given some colour $x\in \bigcup_{v\in V(H)} L(v)$, the probability that $\bsigma(v)\neq x$ for all $v\in V(H)$ is at least
\[ \prod_{v\in V(H)} \pth{1- \frac{1}{|L(v)|-\deg(v)}}.\]
\end{lemma}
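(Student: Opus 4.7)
The plan is to prove the statement by induction on $n \coloneqq |V(H)|$, exploiting how conditioning on the colour of a single vertex yields a reduced list-colouring problem on $H - v$ that still satisfies the hypothesis $|L(u)| \ge \deg(u) + 1$.

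The base case $n = 0$ is vacuous (both sides equal $1$), and for $n = 1$ it reduces to the obvious bound $\Pr[\bsigma(v) = x] \le 1/|L(v)|$. For the inductive step, I would pick an arbitrary vertex $v \in V(H)$ and decompose
\[
\pr{\forall u \in V(H),\ \bsigma(u) \neq x} = \pr{\bsigma(v) \neq x} \cdot \pr{\forall u \in V(H-v),\ \bsigma(u) \neq x \;\middle|\; \bsigma(v) \neq x},
\]
and bound the two factors separately.

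For the first factor, I would condition on the restriction $\bsigma|_{H - v}$: given any realisation $\tau$, the colour $\bsigma(v)$ is uniform on $L(v) \setminus \tau(N(v))$, a set of size at least $|L(v)| - \deg(v)$. Hence $\pr{\bsigma(v) = x \mid \bsigma|_{H-v}} \le 1/(|L(v)| - \deg(v))$, which yields $\pr{\bsigma(v) \neq x} \ge 1 - 1/(|L(v)| - \deg(v))$. For the second factor, I would condition further on $\bsigma(v) = c$ for each $c \neq x$; under this event $\bsigma|_{H-v}$ is uniformly distributed over proper $L_c$-colourings of $H - v$, where $L_c(u) \coloneqq L(u) \setminus \{c\}$ if $u \in N(v)$ and $L_c(u) \coloneqq L(u)$ otherwise. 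The key observation is that $|L_c(u)| - \deg_{H - v}(u) \ge |L(u)| - \deg_H(u)$ for every $u \neq v$, since removing $v$ drops the degree by one on $N(v)$ exactly where $L_c$ may lose one colour, so the hypothesis is preserved with the same gap. Thus the induction hypothesis applies on $H - v$ with list-assignment $L_c$, giving
\[
\pr{\forall u \neq v,\ \bsigma(u) \neq x \;\middle|\; \bsigma(v) = c} \ge \prod_{u \neq v} \pth{1 - \frac{1}{|L(u)| - \deg(u)}}.
\]
Averaging over $c \neq x$ preserves this lower bound, and multiplying with the first factor yields the claimed product.

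I do not anticipate a serious obstacle: the only point that needs care is verifying that the inductive hypothesis is applicable after conditioning on $\bsigma(v) = c$, which comes down to the trivial arithmetic $(|L(u)| - 1) - (\deg_H(u) - 1) = |L(u)| - \deg_H(u)$ on $N(v)$. The rest is a clean telescoping of conditional probabilities.
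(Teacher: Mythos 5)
Your proof is correct, and it takes a slightly different route from the paper's. Both arguments hinge on the same one-vertex conditional bound $\pr{\bsigma(v)=x \mid \restrict{\bsigma}{H\setminus v}} \le \frac{1}{|L(v)|-\deg(v)}$, but the paper concludes via a resampling argument: sample a uniform proper $L$-colouring and resample each vertex's colour once in some fixed order; each resampling step preserves uniformity, and at each step the freshly drawn colour equals $x$ with conditional probability at most $\frac{1}{|L(v)|-\deg(v)}$, so composing these bounds across the $n$ steps yields the product while the final colouring remains uniform. You instead run a direct induction on $|V(H)|$, conditioning on $\bsigma(v)=c$ and observing that $\restrict{\bsigma}{H\setminus v}$ is then a uniform proper $L_c$-colouring of $H\setminus v$ whose reduced lists still satisfy the hypothesis with the same list-degree gap. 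Your version makes the book-keeping explicit: the invariant $|L_c(u)| - \deg_{H\setminus v}(u) \ge |L(u)| - \deg_H(u)$ is exactly what closes the induction, whereas the paper leaves the composition of the conditional bounds across resampling steps largely implicit. Both proofs are valid and of comparable length; yours is somewhat more self-contained and would serve as a legitimate drop-in replacement.
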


\begin{proof}
Since $H$ is greedily $L$-colourable, we can sample a uniform proper $L$-colouring $\bsigma$ of $H$. 
Let $v\in V(H)$.
For every $L$-colouring $\sigma'$ of $H'\coloneqq H\setminus v$, one has
\[ \pr{\bsigma(v)=x \mid \restrict{\bsigma}{H'}=\sigma'} \le \frac{1}{|L(v)|-\deg(v)},\]
since after removing the colours in $\sigma'(N(v))$ from $L(v)$, there remains at least $|L(v)|-\deg(v)$ possible choices for $\bsigma(v)$ which are equiprobable. 
Sampling a uniformly random $L$-colouring and then resampling the colour of each vertex once gives the result, as the resulting random $L$-colouring is also uniformly distributed. 
\end{proof}

% Given a list-assignment $L\colon V(G)\to 2^\mathbb{N}$ of $G$, a proper $L$-colouring $\simga$ of a subgraph $H$ of $G$ and a vertex $v\in V(G)$, we let $L_c(v) \coloneqq L(v) \setminus c(N_H(v))$ be the set of colours  available at $v$ given $\sigma$, and we let $\ell_c(v) \coloneqq |L_c(v)|$ be the number of such colours.

% \begin{thm}\label{thm:main}
% Let $G$ be an $n$-vertex graph of maximum degree $\Delta$ such that every graph induced by a neighbourhood in $G$ has average degree at most $d\le \frac{\Delta}{6}-1$. Write $\rho \coloneqq \Delta/(d+1)$ and let $\ell \ge (d+1)(\ln \rho)^3$. Then for every list-assignment  $L\colon V(G)\to 2^\mathbb{N}$ with 
% \[|L(v)| \ge \pth{1+\frac{2}{\ln \rho}} \frac{\deg(v)}{W\pth{\frac{\deg(v)}{\ell}}}\]
% for every vertex $v\in V(G)$, the following holds.
% For all $v \in V(G)$, if $\bsigma$ is a proper $L$-colouring chosen uniformly at random then $\esp{\ell_\bsigma(v)} \ge  \ell$.
% \end{thm}

% In the statement of Theorem~\ref{thm:main}, we use the $W$-Lambert function $z\mapsto W(z)$ which is defined as the reciprocal of the function $z \mapsto z\mathrm{e}^z$. In the proof of Theorem~\ref{thm:main}, we will use the well-known fact that $\mathrm{e}^{W(z)} = z/W(z)$. Moreover, we note that $W(z) = \ln z - \ln \ln z + o(1)$ as $z\to \infty$; hence, by fixing $d\coloneqq 0$, Theorem~\ref{thm:main} implies the result of Molloy for triangle-free graphs $G$ of maximum degree $\Delta$, namely $\chi_\ell(G) \le (1+o(1))\Delta/\ln \Delta$. 
% In the setting of \cite{Vu02}, Theorem~\ref{thm:main} has the following result as a corollary.

The proof of Theorem~\ref{thm:main} will rely on a more advanced version of the Coupon-Collector Lemma that was needed to prove Theorem~\ref{thm:main_simp}. The statement is about the expected number of colours that a uniformly random list-colouring of a given graph $H$ does not use, when the input is a random list-assignment of $H$.

\begin{lemma}[Generalised Coupon-Collector Lemma]
\label{lem:coupon-collector-gen}
Let $H$ be a graph on $n$ vertices, and $\bL$ a random list-assignment of $H$, such that $H$ is deterministically $\bL$-colourable.
Fix some $t\ge 1$, and define the random variable $\bX \coloneqq \#\{v \in V(H) : |\bL(v)| < (\deg(v)+1)(t+1)\}$.
Let $\bsigma$ be a uniformly random $\bL$-colouring of $H$, and define the random variable $L_\bsigma \coloneqq [k] \setminus \bsigma(V(H))$. 
Then
\[
\esp{|L_\bsigma|} \ge k_0\,\mathrm{e}^{-\pth{1+\frac{1}{t}}\frac{n}{k_0}}, \quad \mbox{where } k_0 = k - \esp{\bX}.
\]
\end{lemma}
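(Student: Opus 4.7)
The plan is to mimic the proof of the original Coupon-Collector Lemma, replacing the independent choices of $\bsigma(i)\in \bL_i$ with the sampling of a proper $\bL$-colouring $\bsigma$ of $H$. The role of the "short list" threshold $t$ is taken over by the threshold $(\deg(v)+1)(t+1)$; the extra factor $\deg(v)+1$ is exactly the slack that Lemma~\ref{lem:maxdeg} requires after we pass from $H$ to the induced subgraph spanned by the big-list vertices.

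First I would fix a realisation $L$ of $\bL$, write $S\coloneqq \{v\in V(H): |L(v)|<(\deg(v)+1)(t+1)\}$ for the short-list vertices and $B\coloneqq V(H)\setminus S$, and condition on the restriction $\bsigma|_S= x_S$ for some valid choice of $x_S$. Set $L'(v)\coloneqq L(v)\setminus x_S(N(v)\cap S)$ for $v\in B$; conditionally on $\bsigma|_S=x_S$, the restriction $\bsigma|_B$ is uniformly distributed among the proper $L'$-colourings of $H[B]$. A direct computation gives
\[ |L'(v)|-\deg_B(v)\ \ge\ |L(v)|-\deg(v)\ \ge\ t(\deg(v)+1)+1, \]
and in particular $|L'(v)|\ge (t+1)(\deg_B(v)+1)$, because the "lost" colours on the $S$-side are paid for by the lost degree. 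Hence Lemma~\ref{lem:maxdeg} applies: for each colour $x\in L_0\coloneqq [k]\setminus x_S(S)$,
\[ \pr{x\notin \bsigma(B) \,\big|\, \bsigma|_S=x_S}\ \ge\ \prod_{v\in B,\ x\in L'(v)}\!\!\pth{1-\tfrac{1}{|L'(v)|-\deg_B(v)}}. \]

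The key analytic step is the inequality
\[ -\ln\!\pth{1-\tfrac{1}{|L'(v)|-\deg_B(v)}}\ \le\ \pth{1+\tfrac{1}{t}}\cdot\tfrac{1}{|L'(v)|}, \]
which follows from $-\ln(1-1/z)\le 1/(z-1)$ and the elementary bound $|L'(v)|/(|L'(v)|-\deg_B(v)-1)\le 1+1/t$, itself a direct consequence of $|L'(v)|\ge (t+1)(\deg_B(v)+1)$ established above. After this step I would sum over $x\in L_0$, apply Jensen's inequality to the exponential, and invoke the Molloy double-sum trick exactly as in the original lemma:
\[ \sum_{x\in L_0}\ \sum_{v\in B,\ x\in L'(v)}\!\!\tfrac{1+1/t}{|L'(v)|}\ \le\ \pth{1+\tfrac{1}{t}}\sum_{v\in B}\sum_{x\in L'(v)}\tfrac{1}{|L'(v)|}\ =\ \pth{1+\tfrac{1}{t}}|B|\ \le\ \pth{1+\tfrac{1}{t}}n. \]
Combined with $|L_0|\ge k-|S|$, this yields
\[ \esp{|L_\bsigma|\,\big|\,\bL=L,\ \bsigma|_S=x_S}\ \ge\ (k-|S|)\exp\!\pth{-\tfrac{(1+1/t)n}{k-|S|}}, \]
and the same bound survives after averaging over the conditioning $\bsigma|_S=x_S$.

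Finally I would average over the random list-assignment $\bL$: the function $z\mapsto z\,\mathrm{e}^{-C/z}$ is convex on $(0,\infty)$ for every $C>0$ (its second derivative is $C^2\mathrm{e}^{-C/z}/z^3$), so Jensen's inequality applied to the random variable $k-\bX$ gives the advertised bound $\esp{|L_\bsigma|}\ge k_0\,\mathrm{e}^{-(1+1/t)n/k_0}$. I expect the main technical obstacle to be the verification of the inequality $|L'(v)|\ge (t+1)(\deg_B(v)+1)$ and the attendant bound on $-\ln(1-1/z)$; everything else is a direct transcription of the original proof with bookkeeping for $\deg_S(v)$ and $\deg_B(v)$.
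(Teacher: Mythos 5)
Your proof is correct and follows essentially the same route as the paper's: decompose into short-list vertices $S$ and big-list vertices $B$, condition on $\bsigma|_S$, apply Lemma~\ref{lem:maxdeg} to $H[B]$ with the residual lists, use Molloy's double-sum trick, and close with two applications of Jensen's inequality (convexity of $\exp$, then of $z\mapsto z\mathrm{e}^{-C/z}$). The only cosmetic difference is that you carry the conditional list sizes $|L'(v)|$ and restricted degrees $\deg_B(v)$ into the double sum directly, whereas the paper first relaxes $1-\tfrac{1}{|L_0(v)|-\deg_B(v)}$ to $1-\tfrac{1}{|L(v)|-\deg(v)}$ and runs the double sum over the original lists; both give the same $(1+1/t)n$ bound.
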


\begin{proof}
Let us fix a realisation $(L(v))_{v\in V(H)}$ of $\bL$. 
%This fixes the realisation $x$ of $\bX$. 
We let $S = \{v : |L(v)| < (\deg(v)+1)(t+1)\}$ be the set of vertices with a short list, and $B = V(H)\setminus S$ be the set of vertices with a big list.
We observe that since $|L(v)| \ge (\deg(v)+1)(t+1)$ for every $v\in B$, we have $\frac{1}{|L(v)|-\deg(v)-1} \le (1+\frac{1}{t})\frac{1}{|L(v)|}$. Combining this observation with one due to Molloy \cite{Mol19}, we obtain
    \begin{equation}
        \label{eq:doublesum2}
        \sum_{x\in [k]} \sum_{\substack{v\in B \\ x\in L(v)}} \tfrac{1}{|L(v)|-\deg(v)-1} \le \pth{1+\tfrac{1}{t}}\sum_{x\in [k]} \sum_{\substack{v\in B \\ x\in L(v)}} \tfrac{1}{|L(v)|} \le \pth{1+\tfrac{1}{t}} \sum_{v\in B} \sum_{x\in L(v)} \tfrac{1}{|L(v)|} \le 
        \pth{1+\tfrac{1}{t}}n.
    \end{equation}

Let us fix the realisation $\sigma_0$ of $\restrict{\bsigma}{S}$, and let $X_0 \coloneqq [k] \setminus \sigma_0(S)$. Note that $|X_0|\ge k-|S|$.
We define $\bX_0 \coloneqq X_0 \setminus \bsigma(B)$; note that $\bX_0$ is precisely $L_\bsigma$ under the condition that $\restrict{\bsigma}{S} = \sigma_0$.  
We let $L_0(v) \coloneqq L(v)\setminus \sigma_0(N(v)\cap S)$ for every $v\in B$; so we have $|L_0(v)| \ge |L(v)| - \deg_S(v)$.
Conditioned on $\restrict{\bsigma}{S}=\sigma_0$, $\bsigma$ induces a uniformly random $L_0$-colouring $\bsigma_1$ of $H[B]$. We apply Lemma~\ref{lem:maxdeg} on $H[B]$ with the list-assignment $L_0$, and obtain

    \begin{align*}
        \esp{|\bX_0|} &= \sum_{x\in X_0} \pr{x\notin \bsigma_1(B)}\\
        & \ge \sum_{x\in X_0} \prod_{\substack{v\in B \\ x\in L_0(v)}} \pth{1-\frac{1}{|L_0(v)|-\deg_B(v)}} \ge \sum_{x\in X_0} \prod_{\substack{v\in B \\ x\in L(v)}} \pth{1-\frac{1}{|L(v)|-\deg(v)}} \hspace{-1000pt}
         \\
        &\ge \sum_{x\in X_0} \exp \pth{-\sum_{\substack{v\in B \\ x\in L(v)}} \frac{1}{|L(v)|-\deg(v)-1}} &
        \mbox{since $1-\frac{1}{z} > \mathrm{e}^{-\frac{1}{z-1}}$ for every $z>1$;}\\
        &\ge |X_0| \exp\pth{-\frac{1}{|X_0|}\sum_{x\in X_0}\sum_{\substack{v\in B \\ x\in L(v)}} \frac{1}{|L(v)|-\deg(v)-1}} &
        \mbox{by convexity of $\exp$;}\\
        &\ge (k-|S|) \exp\pth{- \frac{\pth{1+\frac{1}{t}}n}{k-|S|}} &
        \mbox{by \eqref{eq:doublesum2}.}
    \end{align*}

    There remains to average over all possible realisations of $\bL$. We use Jensen's inequality together with the convexity of the function $z\mapsto z\mathrm{e}^{-C/z}$ for every $C>0$ over the interval $(0,+\infty)$, and obtain that 
    \begin{align*}
        \esp{|\bL|} \ge \esp{(k-\bX) \exp\pth{- \frac{\pth{1+\frac{1}{t}}n}{k-\bX}}} \ge \esp{(k-\bX)}\exp\pth{- \frac{\pth{1+\frac{1}{t}}n}{\esp{k-\bX}}} = k_0 \mathrm{e}^{-\pth{1+\frac{1}{t}}\frac{n}{k_0}},
    \end{align*}
    where $k_0 = k - \esp{\bX}$.

\end{proof}

Before proceeding with the formal proof of Theorem~\ref{thm:main}, we describe the random experiment at its core.
Let $G$ be a graph and $v\in V(G)$. We wish to prove that for $\bsigma$ drawn uniformly at random from the set of proper $k$-colourings of $G'\coloneqq G\setminus v$, the list of available colours at $v$ given $\bsigma$ is large in expectation. 
We do so by analysing the following random procedure.
\begin{enumerate}[(i)]
    \item \label{one} Sample a proper $k$-colouring of $G'$ uniformly at random;
    \item \label{two} mark all vertices in $N(v)$ that have short lists \emph{due to the colouring on} $G'\setminus N(v)$;
    \item \label{three} uncolour all unmarked vertices in $N(v)$;
    \item \label{four} choose a proper re-colouring of the uncoloured vertices in $N(v)$ uniformly at random.
\end{enumerate}
Conveniently, the random proper colouring obtained at the end of this experiment is once again uniformly distributed across all proper colourings of $G'$. 
This is because the vertices we mark in step \eqref{two} are selected solely based on the colouring of $G'\setminus N(v)$, which remains fixed after step \eqref{one}.
The proof proceeds by computing a lower bound on the expected size of the list of available colours at $v$ after step \eqref{four}, thus proving the desired induction hypothesis.
We restate the theorem for the reader's convenience. 
\subsection*{Theorem~\ref{thm:main}}
\textit{Let $G$ be an $n$-vertex graph of maximum degree $\Delta$ such that every graph induced by a neighbourhood in $G$ has average degree at most $d\le \frac{\Delta}{6}-1$. Write $\rho \coloneqq \Delta/(d+1)$ and let $\ell \ge (d+1)(\ln \rho)^3$. Then for every list-assignment  $L\colon V(G)\to 2^\mathbb{N}$ with 
\[|L(v)| \ge \pth{1+\frac{2}{\ln \rho}} \frac{\deg(v)}{W\pth{\frac{\deg(v)}{\ell}}}\]
for every vertex $v\in V(G)$, the following holds.
For all $v \in V(G)$, the uniformly random proper $L$-colouring $\bsigma$ of $G\setminus v$ satisfies $\esp{\ell_\bsigma(v)} \ge  \ell$.}

\begin{proof}[Proof of Theorem~\ref{thm:main}]
Fix $\rho\coloneqq \frac{\Delta}{d+1} \ge 6$, $t\coloneqq (d+1)(\ln \rho+1)$, and $\ell \ge (d+1)(\ln \rho)^3$.
 For every $v\in V(G)$, let $k(v) \coloneqq \pth{1+\frac{2}{\ln \rho}}\frac{\deg(v)}{W\pth{\frac{\deg(v)}{\ell}}}$. Note that $k(v)\ge \frac{\deg(v)}{W\pth{\frac{\deg(v)}{\ell}}} = \ell \mathrm{e}^{W(\deg(v)/\ell)} \ge \ell$, because $W(x)\ge 0$ for every $x\ge 0$.
%and $k \coloneqq \ceil{\pth{1+1/t^2} \dfrac{\Delta}{W\pth{\frac{\Delta(1+1/t^2)}{\ell}}} + \dfrac{\Delta t}{\ell}}$, 
%so that $k = \frac{\Delta}{\ln \Delta - 2\ln \ln \Delta + O(1)}$. 
%(We note here that the actual optimal value of $\ell$ would be $2W(\sqrt{\Delta}/2)$, but we preferred the value $\ln \Delta+1$ in order to keep a relatively simple expression for $k$.)

\smallskip

Let $L$ be any list-assignment of $G$ such that $|L(v)| \ge k(v)$ for every vertex $v\in V(G)$.
%Given a proper $L$-colouring $\sigma$ of a subgraph $H$ of $G$ and a vertex $v\in V(G)$, we let $L_\sigma(v) \coloneqq L(v) \setminus \sigma(N_H(v))$ be the set of colours still available at $v$ given $\sigma$, and $\ell_\sigma(v) \coloneqq |L_\sigma(v)|$ the number of such colours.
Let $\sC(H)$ denote the set of proper $L$-colourings of $H$.
We show by induction that for all induced subgraphs $H\subseteq G$ we have 
\begin{equation}
\label{eq:HI}
\tag{$\star$}
    |\sC(H)|\ge \ell \, |\sC(H\setminus v)|,
\end{equation}
for all $v\in V(H)$. Observe that, given a colouring $\sigma\in \sC(H\setminus v)$, the number of extensions of $\sigma$ to a colouring in $\sC(H)$ is precisely $\ell_\sigma(v)$. Hence \eqref{eq:HI} is equivalent to $\esp{\ell_{\bsigma}(v)}\ge \ell$, for a uniformly random colouring $\bsigma \in \sC(H\setminus v)$. 
%Moreover, as observed at the end of the proof of Theorem~\ref{thm:main_simp}, this implies that if $\btau$ is drawn uniformly at random from $\sC(H)$, then $\esp{\ell_\btau(v)} \ge \esp{\ell_\bsigma(v)^2}\ge \ell$.

The base case with $H=v$ for some $v\in V(G)$ follows as we have $|\sC(\varnothing)|=1$, and $|\sC(H)|=k(v)\ge \ell$.
Suppose now that $|V(H)|\ge 2$, and that the induction hypothesis \eqref{eq:HI} holds for all induced subgraphs of $H'\coloneqq H\setminus v$.
We let $\bsigma$ be drawn uniformly at random from $\sC(H')$, and we let $\bsigma_0$ be obtained from $\bsigma$ by uncolouring $N_H(v)$; thus $\bsigma_0$ is a proper $L$-colouring of $H_0\coloneqq H'\setminus N(v)$.
For every $u\in N_H(v)$, we denote by $d_u$ the degree of $u$ within $H[N_H(v)]$, and we let $t_u \coloneqq (d_u+1)(\ln \rho + 1)$. Hence the average of $t_u$ over all $u\in N_H(v)$ is at most $t$.
Given the realisation of $\bsigma$, we say that a neighbour $u\in N(v)$ of $v$ has a short list if $\ell_{\bsigma_0}(u) \le t_u$, and we let $S_\bsigma$ be the set of vertices $u\in N(v)$ with short lists. First we observe that the expected size of $S_\bsigma$ is small.
By the induction hypothesis \eqref{eq:HI}, and using again the observation that the number of extensions of a colouring $\sigma$ to an additional vertex $u$ is $\ell_\sigma(u)$, we know that 
%for every $u\in N_H(v)$,
\begin{equation}
\label{eq:markov}
      \pr{\ell_\bsigma(u)\le t_u} = \frac{\#\{\sigma'\in \sC(H') : \ell_{\sigma'}(u)\le t_u\}}{|\sC(H')|}\le \frac{t_u \cdot |\sC(H'\setminus u)|}{\ell \cdot |\sC(H'\setminus u)|} \le \frac{t_u}{\ell}.
  \end{equation} 
%where we have used that, given some colouring $\sigma'' \in \sC(H'\setminus u)$, the number of extensions of $\sigma''$ to a colouring $\sigma'\in \sC(H')$ is precisely $\ell_{\sigma''}(u)=\ell_{\sigma'}(u)$.
Summing \eqref{eq:markov} over all $u\in N(v)$, and since $\ell_\bsigma(u) \le \ell_{\bsigma_0}(u)$ for every $u\in N(v)$, we obtain that 
\begin{align}
\esp{|S_\bsigma|} &= \sum_{u\in N(v)}\limits  \pr{\ell_{\bsigma_0}(u)\le t_u} \le \sum_{u\in N(v)}\limits  \pr{\ell_{\bsigma}(u)\le t_u}
\le \sum_{u\in N(v)}\limits \frac{t_u}{\ell}
\\ &\le \frac{t \deg(v)}{\ell}  \le \frac{1}{\ln \rho}\;\frac{\deg(v)}{\ln \rho - 1 }
\le \frac{1}{\ln \rho}\; \frac{\deg(v)}{W\pth{\frac{\deg(v)}{\ell}}},
\label{eq:esperance(k)}
\end{align}
where we use that $\ln \rho -1 \ge W(\rho/(\ln \rho)^3) \ge W(\deg(v)/\ell)$, since $\rho \ge 6$. To see this, observe that the function $x\mapsto \ln x - 1 - W(x/(\ln x)^3)$ is increasing when $x\in (1,+\infty)$, and has a positive value at $x=6$.

We are now going to apply Lemma~\ref{lem:coupon-collector-gen} to the graph $H[N(v)]$ on $\deg(v)$ vertices, with the random list-assignment $L_{\bsigma_0}$, where the colours have been renamed in such a way that $L(v)=[k(v)]$.
We let $\bsigma_1$ be a uniformly random $L_{\bsigma_0}$-colouring of $H[N(v)]$; by Lemma~\ref{lem:coupon-collector-gen} we have 
\begin{equation}
    \label{eq:coupon-collector}
    \esp{\ell_{\bsigma_1}(v)} \ge k_0 \mathrm{e}^{-\pth{1+\frac{1}{\ln \rho}}\frac{\deg(v)}{k_0}}, 
\end{equation}
where $k_0 = k(v) - \esp{|S_{\bsigma}|} \ge \pth{1+\frac{1}{\ln \rho}} \frac{\deg(v)}{W\pth{\frac{\deg(v)}{\ell}}}$. We observe that, by construction,  $\bsigma_0 \cup \bsigma_1$ and $\bsigma$ are identically distributed, hence we have 
\begin{align*}
\esp{\ell_{\bsigma}(v)} = \esp{\ell_{\bsigma_1}(v)}
\ge \pth{1+\frac{1}{\ln \rho}}\frac{\deg(v)}{W\pth{\frac{\deg(v)}{\ell}}} \mathrm{e}^{-W\pth{\frac{\deg(v)}{\ell}}} =\pth{1+\frac{1}{\rho}} \ell.
\end{align*}
This ends the proof of the induction.
\end{proof}

We now prove that Theorem~\ref{thm:main} directly implies Corollary~\ref{cor:vu}.

\subsection*{Corollary~\ref{cor:vu}}
\textit{Let $G$ be a graph of maximum degree $\Delta$, such that every neighbourhood spans at most $\Delta^2/f$ edges, for some $1 \le f \le \Delta^2+1$. Then
$ \chi_\ell(G) \le (1+o(1)) \frac{\Delta}{\ln \min \{\Delta,f\}}$ as $f\to \infty$.}
\begin{proof}[Proof of Corollary~\ref{cor:vu}]
Let $G$ satisfy the hypothesis of Corollary~\ref{cor:vu}.
It is well-known that there exists a $\Delta$-regular graph $H$ and a mapping $\varphi\colon V(H) \to V(G)$ such that $G$ is an induced subgraph of $H$, and for every vertex $v\in V(H)$ the number of edges in $H[N(v)]$ equals that in $G[N(\varphi(v))]$ (see for instance the construction in \cite[Lemma 6]{PiSe21}). So we may assume that $G$ is regular.
The average degree in $G[N(v)]$ is therefore at most $d \coloneqq 2\Delta/f$, for every $v\in V(G)$. We have 
\vspace{-4pt}
 \[ \frac{\Delta}{d+1} = \frac{\Delta f}{f + 2\Delta} \ge \begin{cases}  \frac{f}{3} & \mbox{if $f \le \Delta$,} \\ \frac{\Delta}{3} & \mbox{if $f \ge \Delta$.} \end{cases}
\vspace{-2pt} \]
 
 Setting $\rho\coloneqq \frac{\Delta}{d+1} \ge \min \left\{f/3,\Delta/3\right\}$, we let $L\colon V(G)\to 2^\mathbb{N}$ be any list-assignment of $G$ with $|L(v)| \ge (1+2/\ln \rho) \frac{\Delta}{W\pth{\rho/(\ln \rho)^3}}$. By Theorem~\ref{thm:main}, $G$ is $L$-colourable. Hence
 \[ \chi_\ell(G) \le \pth{1+\frac{2}{\ln \rho}} \frac{\Delta}{W\pth{\frac{\rho}{(\ln \rho)^3}}} 
 %\le \pth{1+\frac{2}{\ln \rho}} \frac{\Delta}{\ln \frac{\rho}{(\ln \rho)^3}-\ln \ln \frac{\rho}{(\ln \rho)^3} + o(1)}
\le (1+o(1)) \frac{\Delta}{\ln \min \{\Delta,f\}},\]
 as $f \to \infty$ (and therefore also $\Delta\to \infty$).
\end{proof}

Next we prove Theorem~\ref{thm:count}, which yields a lower bound on the number of colourings of a graph in the setting of Theorem~\ref{thm:main}.

\subsection*{Theorem~\ref{thm:count}}
\textit{Let $G$ be an $n$-vertex graph of maximum degree $\Delta$ such that every graph induced by a neighbourhood in $G$ has average degree at most $d \le \frac{\Delta}{6}-1$. Let $f \coloneqq \Delta/(d+1)$ and
suppose $L\colon V(G)\to 2^\mathbb{N}$ is a list-assignment  with $|L(v)| \ge \pth{1+\frac{1}{\ln \rho}} q(v)$, where  
\[q(v) \ge \pth{1+\frac{1}{\ln \rho}} \frac{\deg(v)}{W\pth{\frac{\deg(v)}{(d+1)(\ln \rho)^3}}}\]
 for every vertex $v\in V(G)$.
Then there are at least $(q\big/\sqrt{D/(d+1)})^n$
 proper $L$-colourings of $G$, where $D$ is the geometric mean of the degrees in $G$, and $q$ is the geometric mean of $\{q(v)\}_{v\in V(G)}$.}

\begin{proof}[Proof of Theorem~\ref{thm:count}]
Following the same set-up as in the proof of Theorem~\ref{thm:main} we have 
$k_0 = k(v) - \esp{|S_\bsigma|} \ge q(v)$, for every vertex $v\in V(H)$.
We also note that in \eqref{eq:coupon-collector}, we can replace $\deg(v)$ with $\deg_H(v)$. 
Then
\begin{align}
\label{eq:tight-expectancy}
\esp{\ell_\bsigma(v)} &\ge k_0\mathrm{e}^{-\pth{1+\frac{1}{\ln \rho}}\frac{\deg_H(v)}{k_0}} \ge q(v)\mathrm{e}^{-\pth{1+\frac{1}{\ln \rho}}\frac{\deg_H(v)}{q(v)}}.
\end{align}

We now let $v_1,\ldots,v_n$ be an ordering of $V(G)$ such that $\pth{q(v_i)}_{i=1}^n$ is non-decreasing. Letting $H_1$ be the empty graph, and $H_i \coloneqq G[v_1, \ldots, v_{i-1}]$ for every $2\le i \le n$, we apply \eqref{eq:tight-expectancy} on the pairs $(H_i,v_i)$ for every $1\le i \le n$ and obtain that the number of $L$-colourings of $G$ is
\begin{align*}
|\sC(G)| &\ge \prod_{i=1}^n q(v_i) \mathrm{e}^{-\pth{1+\frac{1}{\ln \rho}}\deg_{H_i}(v_i)/q(v_i)} \\
& = q^n \exp \pth{-\pth{1+\tfrac{1}{\ln \rho}} \sum_{uv \in E(G)}\limits \min \left\{ \frac{1}{q(u)}, \frac{1}{q(v)} \right\}} \\
&\ge q^n \exp \pth{-\pth{1+\tfrac{1}{\ln \rho}} \sum_{uv \in E(G)}\limits \pth{ \frac{1}{2q(u)} + \frac{1}{2q(v)} }} \\
&= q^n  \exp\pth{-\pth{1+\tfrac{1}{\ln \rho}} \sum_{i=1}^n \limits\frac{\deg(v_i)}{2q(v_i)}} \\
& \ge  q^n  \exp\pth{-\frac{1}{2} \sum_{i=1}^n \limits \ln \frac{\deg(v_i)}{d+1}} = q^n \pth{\frac{D}{d+1}}^{-n/2}.
\qedhere
\end{align*}
\end{proof}

Let us recall the statement of Lemma~\ref{lem:MOL-list-conc}.
\subsection*{Lemma~\ref{lem:MOL-list-conc}}
\textit{
Let $G$ be a triangle-free graph, let $v \in V(G)$ and let $\sigma_0$ be a proper $k$-colouring of $G\setminus N[v]$ (for some $k$), with at least one extension to $G$. 
Then if $\bsigma$ is the uniformly random extension of $\sigma_0$ to $G$, writing $\ell \coloneqq \esp{\ell_{\bsigma}(v)}$, we have 
\[
\pr{\ell_{\bsigma}(v) \le (1-\delta)\ell} \le \mathrm{e}^{-\frac{\delta^2\ell}{2}},
\]
for all $\delta \in (0,1)$.
}

\medskip
This is proved in the context of proper \emph{partial} $k$-colourings in \cite[Lemma~7]{Mol19}. For completeness, we repeat the proof which holds almost readily in the context of Lemma~\ref{lem:MOL-list-conc}.
In particular, we will need the Chernoff-Hoeffding bound stated in \cite[Lemma~3(b)]{Mol19}. We say that a set of binary random variables $(\bY_i)_{i\in [s]}$ are negatively correlated if 
\[ \esp{\prod_{i\in [s]} \bY_i} \le \prod_{i\in [s]} \esp{\bY_i}.\]

\begin{lemma}
\label{lem:chernoff}
Suppose $\bX_1,\dots,\bX_s$ are binary random variables. Set $\bY_i \coloneqq 1-\bX_i$, and $\bX\coloneqq \sum_{i\in [s]} \bX_i$. Write $ \mu \coloneqq\esp{\bX}$. If $(\bY_i)_{i\in [S]}$ are negatively correlated, then for all $0<\delta<1$,
\[
\pr{\bX \le (1-\delta)\mu} \le e^{-\frac{\delta^2 \mu}{2}}.
\]
\end{lemma}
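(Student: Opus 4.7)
The plan is to exploit the star structure of $G[N[v]]$—which follows from $G$ being triangle-free, since $N(v)$ is then an independent set—to reduce to a setting where Lemma~\ref{lem:chernoff} applies. The key is a factorization of $\bsigma$ once we condition on the colour of $v$.

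First I would establish the following structural fact by a direct counting argument. The valid extensions of $\sigma_0$ to $G$ are in bijection with pairs $(x,\tau)$ where $x \in [k]$ and $\tau(u) \in L_{\sigma_0}(u) \setminus \{x\}$ ranges independently over $u \in N(v)$; hence the uniform distribution on extensions factorizes so that, conditional on $\bsigma(v) = x$, the colours $(\bsigma(u))_{u \in N(v)}$ are mutually independent with $\bsigma(u)$ uniform on $L_{\sigma_0}(u) \setminus \{x\}$.

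Next, write $\ell_\bsigma(v) = \sum_c \bX_c$ with $\bX_c := \mathbf{1}[c \notin \bsigma(N(v))]$. Conditional on $\bsigma(v) = x$, $\bX_x$ equals $1$ deterministically while, for $c \neq x$, $\bX_c = \prod_{u \in N(v)} \mathbf{1}[\bsigma(u) \neq c]$ is a product of (conditionally) independent indicators. A standard argument (as for the classical balls-in-bins model) shows that the complements $\bY_c := 1 - \bX_c$ are negatively correlated conditional on $\bsigma(v) = x$, so Lemma~\ref{lem:chernoff} yields, for every $\delta' \in (0,1)$,
\begin{equation*}
\pr{\ell_\bsigma(v) \le (1-\delta')\,\ell^{(x)} \mid \bsigma(v) = x} \le \exp\pth{-\delta'^2\, \ell^{(x)}/2},
\end{equation*}
where $\ell^{(x)} := \esp{\ell_\bsigma(v) \mid \bsigma(v) = x}$.

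The remaining step—and the main obstacle—is to aggregate over $\bsigma(v)$ to recover a bound centred at $\ell$ rather than at the conditional means $\ell^{(x)}$. For each $x$ with $\ell^{(x)}$ above a suitable threshold, I would calibrate $\delta'$ to the value making $(1-\delta')\,\ell^{(x)} = (1-\delta)\,\ell$ and check that the resulting exponent is at least $\delta^2\,\ell/2$, which reduces to an algebraic inequality. The delicate point is controlling the contribution of those $x$ with $\ell^{(x)} < \ell$: here I would use the identity $\ell = \sum_x \pr{\bsigma(v) = x}\,\ell^{(x)}$, together with the stochastic-dominance observation that $\bsigma|_{N(v)}$ biases the colouring of $N(v)$ towards using \emph{fewer} distinct colours than the fully independent model, to ensure the combined weight of these terms is absorbed into $\exp(-\delta^2\,\ell/2)$.
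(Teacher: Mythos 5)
Your proposal does not prove the stated lemma. Lemma~\ref{lem:chernoff} is a self-contained probabilistic inequality about arbitrary binary random variables $\bX_1,\dots,\bX_s$ whose complements are negatively correlated; there is no graph, no colouring, and no vertex $v$ in its hypotheses. What you have sketched is instead a proof of Lemma~\ref{lem:MOL-list-conc} (the concentration of $\ell_\bsigma(v)$ for the uniformly random extension of $\sigma_0$), and your argument explicitly \emph{invokes} Lemma~\ref{lem:chernoff} as a black box in its second step. As a proof of Lemma~\ref{lem:chernoff} this is circular: the Chernoff--Hoeffding bound for negatively correlated variables is exactly what remains to be established, and nothing in your write-up addresses it. (For the record, the paper itself does not prove this lemma either; it is quoted from Molloy's work, \cite[Lemma~3(b)]{Mol19}.)

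A correct proof is the standard exponential-moment argument, where negative correlation substitutes for independence. Writing $\bY \coloneqq \sum_{i} \bY_i = s - \bX$, one bounds $\pr{\bX \le (1-\delta)\mu} = \pr{\bY \ge s - (1-\delta)\mu}$ via Markov applied to $\mathrm{e}^{t\bY}$ for $t\ge 0$. Since each $\bY_i$ is binary, $\mathrm{e}^{t\bY_i} = 1 + (\mathrm{e}^t - 1)\bY_i$, so expanding the product gives
\[
\esp{\mathrm{e}^{t\bY}} = \sum_{J\subseteq [s]} (\mathrm{e}^t-1)^{|J|}\, \esp{\prod_{i\in J}\bY_i} \le \sum_{J\subseteq [s]} (\mathrm{e}^t-1)^{|J|} \prod_{i\in J}\esp{\bY_i} = \prod_{i\in [s]} \esp{\mathrm{e}^{t\bY_i}},
\]
where the inequality uses negative correlation of the $(\bY_i)$ and the fact that $\mathrm{e}^t - 1\ge 0$. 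From here the usual optimisation over $t$ yields $\mathrm{e}^{-\delta^2\mu/2}$ for the lower tail of $\bX$. If your intent was to prove Lemma~\ref{lem:MOL-list-conc}, note separately that the paper's proof of that lemma does not condition on $\bsigma(v)$ at all: it shows directly that the events $E_x = \{x\in\bsigma(N(v))\}$ are negatively correlated via a resampling coupling, which avoids the aggregation-over-$x$ difficulty you flag at the end.
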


\begin{proof}[Proof of Lemma~\ref{lem:MOL-list-conc}]
    For every $x\in L(v)$, we denote $E_x$ the event that $x\notin L_\bsigma(v)$. We first argue that the events $(E_x)_{x\in L(v)}$ are negatively correlated, i.e. 
    \begin{equation}
        \label{eq:negative-correlation}
        \pr{\bigwedge_{x\in X} E_x} \le \prod_{x\in X} \pr{E_x},
    \end{equation}
    for every $X\subseteq L(v)$. First observe that if $\pr{E_x}=0$ for some $x\in X$, then \eqref{eq:negative-correlation} trivially holds since both terms equal zero. Second, if $\pr{E_x}=1$ for some $x\in X$, then \eqref{eq:negative-correlation} is equivalent to its statement when we remove $x$ from $X$.
    So, in order to prove \eqref{eq:negative-correlation}, let us show that $\pr{E_x ~\middle| ~ \bigwedge_{y\in Y} E_y} \le \pr{E_x}$, for every $x\in L(v)$ and $Y\subseteq L(v)\setminus x$, where we assume that $0<\pr{E_x}<1$. This is equivalent to 
    \begin{equation}
        \label{eq:coupling}
        \pr{\bigwedge_{y\in Y} E_y ~\middle| ~\overline{E_x}} \ge \pr{\bigwedge_{y\in Y} E_y}.
    \end{equation}
    We show that \eqref{eq:coupling} holds with a coupling argument\footnote{This coupling did not appear in earlier proofs of this statement, \eqref{eq:coupling} was assumed.}.
    For every $u\in N(v)$, we define $\bsigma'(u)\coloneqq \bsigma(u)$ if $\bsigma(u) \neq x$; otherwise $\bsigma'(u)$ is drawn uniformly at random from $L(u)\setminus x$ (this is non-empty, otherwise we would have $\pr{E_x}=1$). Then $\bsigma'$ follows the distribution of $\bsigma$ under the condition $\overline{E_x}$. Moreover, by construction, whenever we have $Y\subseteq \bsigma(N(v))$, we deterministically have $Y\subseteq \bsigma'(N(v))$. So 
    \begin{align*}
        \pr{\bigwedge_{y\in Y} E_y ~\middle| ~\overline{E_x}} = \pr{Y \subseteq \bsigma'(N(v))}\ge \pr{Y \subseteq \bsigma(N(v))} = \pr{\bigwedge_{y\in Y} E_y},
    \end{align*}
    as desired.

    To finish the proof, we apply Lemma~\ref{lem:chernoff}.
    We have that $\ell_\bsigma(v)$ is the number of colours $x\in L(v)$ such that $E_x$ does not hold, and since we have shown that the events $(E_x)_{x\in L(v)}$ are negatively correlated, we infer that
    \[\pr{\ell_\bsigma(v) \le (1-\delta)\ell} \le \mathrm{e}^{-\frac{\delta^2\ell}{2}},\]
    as desired.
\end{proof}

We finish with a fully-detailed proof of Theorem~\ref{thm:girth-thawed}, which is our central result on the geometry of the solution space for colourings of high girth graphs.  

\subsection*{Theorem~\ref{thm:girth-thawed}}
\textit{ For all $\eps>0$ the following holds for all $k$ sufficiently large and $\Delta \le (1-\eps)k\ln k$. 
 Suppose $G$ is a graph on $n$ vertices with maximum degree $\Delta$ and girth at least $\ln\ln n$.
 Then for the uniformly random proper $k$-colouring $\bsigma$ of $G$, the following holds w.h.p. for every $v \in V(G)$:
 \begin{enumerate}
    \item in $\bsigma$, $v$ is $\bigO{(\ln n)^2}$-loose\footnote{With a more careful (and technical) analysis, the authors believe that this could be replaced with $O(\ln n)$-loose.};
    \item in $\bsigma$, $v$ is thawed.
\end{enumerate}}

\begin{proof}[Proof of Theorem~\ref{thm:girth-thawed}]
Let us fix a vertex $v \in V(G)$.
We write $s \coloneqq \frac{\eps k}{5} \ge \frac{\eps}{5}\frac{\Delta}{\ln \Delta}$ and $g \coloneqq \lceil \frac{2 \ln \ln n^3}{\ln (s-1)} + 1\rceil$. When $\Delta$ is large enough, we have $2g+2\le \ln \ln n$.
Let $U_i$ denote the set of vertices at distance exactly $i$ from $v$. 
Thus $U_1=N(v)$, and since the girth of $G$ is at least $2g+2$, it holds that $U_i$ is an independent set for every $i \le g$.
Let $\sigma_0$ be chosen uniformly at random from $\sC_k(G)$.
We will show that, with probability at least $1 - 1/n^2$, we can find a sequence $\sigma_1,\dots,\sigma_{g-1}\in \sC_k(G)$ with the following properties: 
\begin{enumerate}
    \item for all $i \in [g-1]$, $\restrict{\sigma_i}{V(G)\setminus U_{g-i+1}} = \restrict{\sigma_{i-1}}{V(G)\setminus U_{g-i+1}}$;
    \item $\ell_{\sigma_{g-1}}(u) > 5/\eps$ for all $u \in N(v)$.
\end{enumerate}
We construct our sequence (deterministically) as follows. 
Given $\sigma_{i-1}$, for $i \in [g-1]$, we let $\sigma_i \in C_k(G)$ be such that $\ell_{\sigma_i}(w)$ is maximum for all $w \in U_{g-i}$, given that $\restrict{\sigma_i}{V(G)\setminus U_{g-i+1}} = \restrict{\sigma_{i-1}}{V(G)\setminus U_{g-i+1}}$.
We note that we can simultaneously maximise $\ell_{\sigma_i}(w)$ for all $w \in U_{g-i}$ because every vertex in $U_{g-i+1}$ is adjacent to exactly one vertex of $U_{g-i}$.

We first deal with the case where $G$ is $\Delta$-regular and for convenience we write $\ell_i(u)$ for $\ell_{\sigma_i}(u)$. 
Now suppose $\ell_i(u) \le 5/\eps$ for some $u \in U_{g-i}$ (we say that $u$ has a short list in $\sigma_i$).
By our definition of $\sigma_i$, there must be no colouring $\tau\in \sC_k(G)$, such that $\tau(w) = \sigma_{i-1}(w)$ for all $w \in V(G)\setminus U_{g-i+1} $ and $\ell_\tau(u) > 5/\eps$. 
When $\Delta$ is large enough, we have $\Delta^{\eps/2}>5/\eps$, so by Corollary~\ref{cor:find_good_col} we must have $\ell_{i-1}(w)\le 5/\eps$ for at least $s$ neighbours $w$ of $u$, $s-1$ of which must lie in $U_{g-i+1}$.

We now observe the following.
Fix some $r \in N(v)$ and let $T$ be the $(\Delta -1)$-ary tree of depth $g-1$ obtained by deleting $v$ and then looking at the induced subgraph of $G-v$ within distance $g-1$ of $r$.
Let $X\coloneqq \{ w \in U_g \cap V(T): \ell_0(w) \le 5/\eps\}$ be the leaves of $T$ that have short lists in $\sigma_0$. 
Then $\ell_i(u) \le 5/\eps$ only if $u$ is eventually activated in the $(s-1)$-upward percolation on $T$ where the set of leaves that are initially activated is $X$.
%, and where $s = \frac{\eps \Delta}{2 \ln \Delta} \ge 6\Delta^{1-\eps/4}$ when $\Delta$ is large enough.

We know that $U_g$ is an independent set, so by Theorem~\ref{cor:ind_set_corr} the events that $\ell_0(w) < 5/\eps$ for $w \in U_g$ are $\ber(p)$-dominated for $p = \frac{5}{\eps \ell}$, where $\ell = \Delta^{\eps/2}$.
Hence, when $\Delta$ is large enough, we have $s-1 \ge \max \{ 6p\Delta, 3\ln \Delta\}$.
%Further $\frac s\Delta = \frac{\eps}{100\ln \Delta}$ and so $(\frac s\Delta)^4 > 3p$. We may assume that $\Delta$ is large enough so that $\ln s \ge \frac{2}{3}\ln \Delta$.
We apply Lemma~\ref{lem:percolation} with $f = g-1$ and obtain that 
\begin{align*}
    \pr{\ell_{g-1}(r) \le 5/\eps} &\le \pr{r \text{ is activated}} \le \exp\pth{-{(s-1)^{f/2}}} \\
    &\le  \exp\pth{-{\mathrm{e}^{\frac{1}{2}(g-1)\ln (s-1)}}} \le \exp\pth{-{\mathrm{e}^{\ln \ln n^3}}} = \frac{1}{n^3}.
\end{align*}
Union bounding over all neighbours $r$ of $v$ we find that 
\[
\pr{\ell_{g-1}(r) > 5/\eps \mbox{ for every } r \in N(v) } \ge 1 - \frac{\Delta}{n^3} > 1 - \frac{1}{n^2}.
\]
 We do a union bound over all $v\in V(G)$, and obtain that with probability at least $1-1/n$ we find the desired sequence $\sigma_0,\dots,\sigma_{g-1}$ for every $v\in V(G)$.

We now observe that given such a colouring $\sigma_{g-1}$, we can change the colour of $v$ to any other by first recolouring each $r \in N(v)$ to avoid the desired colour. 
Thus the existence of $\sigma_0,\dots,\sigma_{g-1}$ implies that $v$ is thawed in $\sigma_0$ and is $\bigO{(\ln n)^2}$-loose in $\sigma_0$, since we only need to recolour vertices at distance at most $g$ from $v$, and there are at most $\Delta^g = \bigO{(\ln n)^2}$ of them.
Union bounding over all choices of $v$ yields the result.

Finally we deal with the case where $G$ is not regular. 
For each vertex $w$ at distance less than $g+1$ from $v$, with degree less than $\Delta$ we simply add rooted $(\Delta-1)$-ary trees of depth $g$ and connect $w$ to the root, until $w$ has degree $\Delta$. 
We then treat all newly added edges as invisible in that they can be monochromatic. 
Thus the lists of the newly added vertices are always just $[k]$, and their colours do not affect the lists of any other vertices. 
Therefore we once again have the crucial property that if  some vertex $u\in U_{g-i}$ has  $\ell_i(u) \le 5/\eps$, then at least $s$ of its children $w$ must have $\ell_{i-1}(w)\le 5/\eps$. 
Further the events that the leaves of $T$ have short lists are once again $\ber(p)$ dominated for the same value of $p$.
This allows us to repeat the rest of the arguments. 

\end{proof}

\end{document}